\newtheorem*{rep@theorem}{\rep@title}
\newcommand{\newreptheorem}[2]{%
\newenvironment{rep#1}[1]{%
 \def\rep@title{#2 \ref{##1}}%
 \begin{rep@theorem}}%
 {\end{rep@theorem}}}
\newtheorem{theorem}{Theorem}[section]
\newtheorem{proposition}[theorem]{Proposition}
\newtheorem*{proposition*}{Proposition}
\newtheorem{lemma}[theorem]{Lemma}
\newtheorem{corollary}{Corollary}[theorem]
\theoremstyle{definition}
\newtheorem{definition}[theorem]{Definition}
\newtheorem*{definition*}{Definition}
\newtheorem{example}{Example}[section]
\theoremstyle{remark}
\newtheorem{remark}{Remark}[section]
\title[On the cohomology of surfaces isogenous to a product]{On the cohomology of regular surfaces isogenous to a product of curves with $\chi(\mathcal{O}_S)=2$}
\date{\today}
\author{Matteo A. Bonfanti}
\date{}
\address{Dipartimento di Matematica, Universit\`a degli Studi di Milano, Via Saldini 50, 20133 Milano, Italia}
\email{matteoalfonso.bonfanti@gmail.com}
\subjclass[2010]{Primary 14J29; Secondary14C22.}
\keywords{Surfaces isogenous to a product of curves, Picard number.}
\begin{document}

\begin{abstract}
Let $S$ be a surface isogenous to a product of curves of unmixed type.  After presenting several results useful to study the cohomology of $S$ we prove a structure theorem for the cohomology of regular surfaces isogenous to a product of unmixed type with $\chi (\mathcal{O}_S)=2$.
In particular we found two families of surfaces of general type with maximal Picard number.
\end{abstract}

\maketitle

\section*{Introduction} 
Surfaces isogenous to a product of curves have been introduced by Catanese in \cite{Ca00}.  Starting from that paper they have been studied extensively, in particular in the last years.
They provide an easy way to construct surfaces of general type with fixed geometrical invariants.  

Moreover surfaces isogenous to a product are in correspondence with combinatorial structures that a finite group can admit.  
Via this correspondence several authors have classified these surfaces, as in \cite{BCG06}, \cite{CP09}, \cite{Pe10}, \cite{Gl11}.

In this paper we study the cohomology of surfaces isogenous to a product using algebraic methods, in particular group representation theory.  The guiding idea behind is that the cohomology of a surface $S\cong \frac{C\times D}{G}$ is completely determined by the action of the group $G$.  Although our construction is quite general we apply our result to a specific class in order to prove the following:
\begin{reptheorem}{main+}
Let $S$ be a regular surface isogenous to a higher product of unmixed type with $\chi(\mathcal{O}_S)=2$.  Then there exist two elliptic curves $E_C$ and $E_D$ such that $H^2(S,\mathbb{Q})\cong H^2(E_C\times E_D, \mathbb{Q})$ as rational Hodge structures.
\end{reptheorem}

This paper is organized as follows: in the first section we recall all the requires definitions and results; in the second one we study the cohomology of surfaces isogenous to a product of unmixed type, and in particular we focus on the case of regular surfaces with $\chi(\mathcal{O}_S)=2$.  In the third section we study in detail some special surfaces and in the fourth one we present our main result, together with an important observation about the Picard number of the surfaces we studied.

\subsection*{Notation and conventions}
In this paper with curve or surface we mean a complex, smooth projective manifold of complex dimension $1$ or $2$ respectively.  For a given surface $S$ we denote by $\chi(\mathcal{O}_S)$ the holomorphic Euler characteristic, by $e(S)$ the topological Euler characteristic and by $\rho(S)$ the Picard number of $S$.
The invariant $q(S)=h^{1,0}(S)$ is called irregularity: a regular surface $S$ is a surface with $q(S)=0$.

We use also standard notation in group theory: $\mathbb{Z}_n=\mathbb{Z}/n\mathbb{Z}$ is the cyclic group of order $n$; $\mathcal{S}_n,\,\mathcal{A}_n$ and $\mathcal{D}_n$ are respectively the symmetric, the alternating and the dihedral group on $n$ elements.

\subsection*{Acknowledgements}
The author would like to thank his advisor Bert van Geemen for introducing him to the subject.

\section{Preliminaries and basic results}
In this first section we recall all the definitions and results we need in this paper.  In particular in the Section \ref{GAD} we study in detail the group algebra decomposition.

\subsection{Surfaces isogenous to a product}
\begin{definition}
A smooth surface $S$ is said to be isogenous to a product (of curves) if it is isomorphic to a quotient $\frac{C\times D}{G}$ where $C$ and $D$ are curves of genus at least one and $G$ is a finite group acting freely on $C\times D$.\\
If the genus of both curves is greater or equal than two $S$ is said to be isogenous to a higher product.
\end{definition}
\noindent

Let $S\cong \frac{C\times D}{G}$ be a surface isogenous to a product.  The group $G$ is identified with a subgroup of $Aut(C\times D)$ via the group action.  We set 
\begin{equation*}
G^0:=G\cap\left(Aut(C)\times Aut(D)\right).
\end{equation*}
The group $Aut(C)\times Aut(D)$ is a normal subgroup of $Aut(C\times D)$ of index one or two, thus or $G=G^0$ or $[G:G^0]=2$.  In particular an element in the subgroup $G^0$ acts on each curve and diagonally on the product, conversely an element $g\in G$ but not in $G^0$ acts on the product interchangig factors.

\begin{definition}
Let $S$ be a surface isogenous to a product.  Then $\frac{C\times D}{G}$ is a minimal realization of $S$ if $S\cong \frac{C\times D}{G}$ and $G^0$ acts faithfully on both curves.
\end{definition}

\begin{proposition}[\cite{Ca00}, Proposition 3.13]
Let $S$ be a surface isogenous to a higher product.  Then a minimal realization exists and it is unique.
\end{proposition}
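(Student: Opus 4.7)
The plan is to establish existence by a reduction procedure and uniqueness via the universal cover.

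\textbf{Existence.} Given any realization $S\cong (C'\times D')/G'$, I would refine it as follows. Define $K_{C'}:=\ker(G'^0\to\mathrm{Aut}(C'))$ and $K_{D'}:=\ker(G'^0\to\mathrm{Aut}(D'))$. Freeness of the $G'$-action forces $K_{C'}$ to act freely on $D'$: a fixed point of a nontrivial $k\in K_{C'}$ on $D'$ would yield a whole fiber $C'\times\{d_0\}$ of fixed points. Analogously $K_{D'}$ acts freely on $C'$, and (since the $G'$-action is faithful) $K_{C'}\cap K_{D'}=\{1\}$. The product $N:=K_{C'}K_{D'}\cong K_{C'}\times K_{D'}$ is normal in $G'$ (the swap conjugation in the mixed case interchanges the two kernels), so passing to $C:=C'/K_{D'}$, $D:=D'/K_{C'}$, and $G:=G'/N$, one obtains a realization $(C\times D)/G$ of $S$ in which $G^0=G'^0/N$ acts faithfully on both factors by construction; Riemann--Hurwitz ensures $C$, $D$ still have genus at least $2$.

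\textbf{Uniqueness.} Since $g(C),g(D)\geq 2$, the universal cover of $S$ is the bidisk $\mathbb{H}\times\mathbb{H}$ and $\pi_1(S)$ embeds as a cocompact lattice in $\mathrm{Aut}(\mathbb{H})^2$. A minimal unmixed realization corresponds to a finite-index normal subgroup of $\pi_1(S)$ of the form $\Gamma_C\times\Gamma_D$, each factor a cocompact Fuchsian group on one $\mathbb{H}$-copy. The plan is to show this subgroup is intrinsically determined by $S$ as the maximal finite-index normal subgroup of $\pi_1(S)$ admitting such a product decomposition. Geometrically, this mirrors the fact that the two projections $S\to C/G^0$ and $S\to D/G^0$ are the only fibrations of $S$ onto smooth curves of genus $\geq 2$ with connected fibers of genus $\geq 2$, a property detectable via Stein factorization together with Castelnuovo--de Franchis. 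Once $\Gamma_C$ and $\Gamma_D$ are pinned down, $C=\mathbb{H}/\Gamma_C$, $D=\mathbb{H}/\Gamma_D$, and the action of $G=\pi_1(S)/(\Gamma_C\times\Gamma_D)$ are all uniquely determined.

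\textbf{Main obstacle.} Existence is a formal reduction. The substance lies in uniqueness: one must show that the product structure of the finite-index normal subgroup is an invariant of $S$, and not an artifact of the chosen realization. This relies on a rigidity statement about how cocompact lattices in $\mathrm{Aut}(\mathbb{H})^2$ can split as products of Fuchsian groups on each factor, combined with the intrinsic characterization of the two essential fibrations of $S$.
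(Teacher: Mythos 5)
The paper does not prove this statement at all: it is quoted from Catanese (\cite{Ca00}, Proposition 3.13), so the comparison has to be with that proof. Your existence step is essentially the standard reduction and is sound: quotienting by $N=K_{C'}K_{D'}$ works, the quotient action is free, and Riemann--Hurwitz applied to the \'etale covers $C'\to C'/K_{D'}$, $D'\to D'/K_{C'}$ keeps the genera $\ge 2$. One small overstatement is the phrase ``faithfully on both factors by construction'': you still need to check that the kernel of $G'^0/N\to \mathrm{Aut}(C'/K_{D'})$ is trivial, which requires identifying the automorphisms of $C'$ preserving every fibre of the \'etale Galois cover $C'\to C'/K_{D'}$ with its deck group $K_{D'}$; this is easy but not automatic.

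The genuine gap is in the uniqueness half, which you explicitly leave as a ``plan''. The whole content is the rigidity statement you name but do not prove: that the product subgroup is intrinsic to $S$. The clean route (and Catanese's) is: since $g(C),g(D)\ge 2$ the universal cover is $\mathbb{H}\times\mathbb{H}$, and every biholomorphism of $\mathbb{H}\times\mathbb{H}$ preserves or swaps the two factors; hence $\Gamma=\pi_1(S)$ sits canonically in $(\mathrm{Aut}\,\mathbb{H})^2\rtimes\mathbb{Z}/2$ (note that in the mixed case $\Gamma$ is \emph{not} inside $(\mathrm{Aut}\,\mathbb{H})^2$, contrary to your phrasing), $\Gamma^0=\Gamma\cap(\mathrm{Aut}\,\mathbb{H})^2$ is intrinsic, and the minimal realization is recovered as $\Pi_C=\ker(p_2|_{\Gamma^0})$, $\Pi_D=\ker(p_1|_{\Gamma^0})$, $C=\mathbb{H}/\Pi_C$, $D=\mathbb{H}/\Pi_D$, $G=\Gamma/(\Pi_C\times\Pi_D)$; maximality of $\Pi_C\times\Pi_D$ among geometric product subgroups then follows trivially from the kernel description, which also shows your ``maximal finite-index normal product subgroup'' is well defined. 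None of this is carried out in your proposal, and the geometric justification you offer instead is flawed: the claim that the projections $S\to C/G^0$ and $S\to D/G^0$ are the only fibrations onto curves of genus $\ge 2$ is inapplicable here, since for the regular surfaces of this paper $C/G\cong D/G\cong\mathbb{P}^1$, and Castelnuovo--de Franchis detects nothing when $q(S)=0$. So the architecture matches the cited proof, but the decisive step --- the canonical splitting of the universal cover and the intrinsic identification of $\Pi_C\times\Pi_D$ --- is missing, and the substitute argument you sketch would fail.
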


From now on whenever we refer to a surface $S$ isogenous to a higher product we will always assume that it is given by its minimal realization.

\begin{definition}
Let $S\cong \frac{C\times D}{G}$ be a surface isogenous to a product.  $S$ is said to be of unmixed type if $G=G^0$, of mixed type otherwise.
\end{definition}

We recall some well known results about surfaces isogenous to a product, in  particular about their invariants.

\begin{proposition}[\cite{Ca00}]
Let $S=\frac{C\times D}{G}$ be a surface isogenous to a higher product.  Then $S$ is minimal surface of general type.
\end{proposition}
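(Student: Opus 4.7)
The plan is to deduce the properties of $S$ from those of the étale cover $C \times D \to S$. Since $G$ acts freely on $C \times D$, the quotient map $\pi\colon C\times D \to S$ is an unramified (étale) covering of degree $|G|$, and in particular $K_{C \times D} = \pi^{*}K_{S}$.

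First I would establish the corresponding statement for the product: $C \times D$ is itself a minimal surface of general type. Since $g(C), g(D) \geq 2$, both $K_C$ and $K_D$ are ample on the respective curves, and by the Künneth-type decomposition $K_{C\times D} = p_1^{*}K_C + p_2^{*}K_D$ the canonical bundle of the product is ample (a sum of pullbacks of ample divisors under the two projections). In particular $C\times D$ is of general type; it is automatically minimal because its canonical bundle is ample, so no $(-1)$-curve can exist (a $(-1)$-curve $E$ would satisfy $K\cdot E = -1 < 0$, contradicting ampleness of $K$).

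Next I would transfer both properties down to $S$ through the étale cover $\pi$. For general type, since $\pi$ is étale one has $K_{C\times D}^{2} = |G|\cdot K_{S}^{2}$ and similarly $\chi(\mathcal{O}_{C\times D}) = |G|\cdot\chi(\mathcal{O}_S)$, so the relevant inequalities are preserved; more intrinsically, $K_{C\times D} = \pi^{*}K_S$ is ample, which forces $K_S$ itself to be ample (ampleness descends under finite surjective morphisms, or one can argue directly with the Nakai--Moishezon criterion using $K_S^2 > 0$ and $K_S \cdot C > 0$ for every curve $C \subset S$, which follows from the corresponding statement upstairs). Hence $S$ is of general type.

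For minimality, the key observation is that $C\times D$ contains no rational curves: any morphism $\mathbb{P}^1 \to C\times D$ composed with a projection gives a morphism $\mathbb{P}^1 \to C$ or $\mathbb{P}^1 \to D$, which must be constant since $g(C), g(D) \geq 2$. Now suppose for contradiction that $S$ contains a $(-1)$-curve $E \cong \mathbb{P}^1$. The preimage $\pi^{-1}(E)$ is étale over $\mathbb{P}^1$, hence (by simple connectedness of $\mathbb{P}^1$) a disjoint union of copies of $\mathbb{P}^1$ in $C\times D$, contradicting the absence of rational curves on the product. Therefore $S$ carries no $(-1)$-curve and is minimal. The main subtlety I expect is the minimality step — pinning down that étale covers of $\mathbb{P}^{1}$ are trivial and that this genuinely rules out exceptional curves on $S$ — but once that observation is in hand the argument closes quickly.
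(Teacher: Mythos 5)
Your proof is correct, and in fact more than the paper supplies: the paper states this result with a citation to Catanese [Ca00] and gives no argument of its own, so there is no in-paper proof to compare against. Your route (pass to the free quotient $\pi\colon C\times D\to S$, note $K_{C\times D}=\pi^{*}K_S$, show $K_{C\times D}$ is ample, descend ampleness along the finite surjective map $\pi$, and conclude both general type and minimality from ampleness of $K_S$) is essentially the standard argument behind Catanese's statement, and all the facts you invoke are correctly applied. One small precision: $p_1^{*}K_C+p_2^{*}K_D$ is ample not because each summand is ample after pullback (each is only nef, being trivial on the fibres of its projection), but because the external product of ample line bundles on a product is ample — equivalently, by Nakai--Moishezon, since the square is $2\deg K_C\cdot\deg K_D>0$ and no irreducible curve in $C\times D$ is contracted by both projections, so the intersection with every curve is positive. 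Your alternative minimality argument (no rational curves on $C\times D$ since both projections kill maps from $\mathbb{P}^1$, and an étale cover of a $(-1)$-curve would consist of rational curves upstairs) is also correct, though once $K_S$ is known to be ample it is redundant, since $K_S\cdot E=-1$ is already impossible.
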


\begin{proposition}[\cite{Ca00}, Theorem 3.4]
\label{InvariantSurface}
Let $S\cong\frac{C\times D}{G}$ be a surface isogenous to a product.  Then the following equalities hold:
\begin{itemize}
\item $\chi(\mathcal{O}_S)=\frac{(g(C)-1)(g(D)-1)}{|G|}$;
\item $e(S)=4\chi(\mathcal{O}_S)=\frac{4(g(C)-1)(g(D)-1)}{|G|}$;
\item $K_S^2=8\chi(\mathcal{O}_S)=\frac{8(g(C)-1)(g(D)-1)}{|G|}$.
\end{itemize}
\end{proposition}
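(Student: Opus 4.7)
The plan is to exploit the fact that, since $G$ acts freely on $C\times D$, the quotient map $\pi\colon C\times D\to S$ is an étale (unramified) Galois cover of degree $|G|$. Each of the three invariants above is well behaved under such covers, so it suffices to compute them on $C\times D$ (where Künneth and product formulas apply) and then divide by $|G|$.

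First I would handle the topological Euler characteristic. Since $\pi$ is an unramified $|G|$-sheeted covering of topological spaces, $e(C\times D)=|G|\cdot e(S)$. On the other hand $e(C\times D)=e(C)e(D)=(2-2g(C))(2-2g(D))=4(g(C)-1)(g(D)-1)$, giving the formula for $e(S)$. Next I would compute $K_S^2$. Because $\pi$ is étale we have $\pi^{*}K_S=K_{C\times D}=p_C^{*}K_C+p_D^{*}K_D$, and étale pullback multiplies self-intersection numbers by the degree, so $|G|\cdot K_S^2=K_{C\times D}^2=2\deg(K_C)\deg(K_D)=8(g(C)-1)(g(D)-1)$.

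Finally, for $\chi(\mathcal{O}_S)$ I see two equally clean routes. The first is to use that étale maps are flat of degree $|G|$, hence $\pi_{*}\mathcal{O}_{C\times D}$ is a locally free $\mathcal{O}_S$-module of rank $|G|$ and the Leray spectral sequence collapses to give $\chi(\mathcal{O}_{C\times D})=|G|\cdot \chi(\mathcal{O}_S)$; then Künneth yields $\chi(\mathcal{O}_{C\times D})=\chi(\mathcal{O}_C)\chi(\mathcal{O}_D)=(1-g(C))(1-g(D))=(g(C)-1)(g(D)-1)$, giving the desired formula. The second route is simply to combine the two previous items with Noether's formula $\chi(\mathcal{O}_S)=(K_S^2+e(S))/12$, which immediately produces $(g(C)-1)(g(D)-1)/|G|$ and simultaneously records the relations $e(S)=4\chi(\mathcal{O}_S)$ and $K_S^2=8\chi(\mathcal{O}_S)$.

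There is no real obstacle: the only point that requires a moment of care is the multiplicativity of $\chi(\mathcal{O})$ under étale covers, which relies on $\pi$ being finite flat (not merely smooth). Once that is noted, the proposition is essentially a three-line computation, and the ratios $e(S):\chi(\mathcal{O}_S):K_S^2=4:1:8$ can either be read off from the formulas just derived or, a priori, from Noether's formula together with the vanishing of the ramification contribution in $e$ and $K^2$ under an étale cover.
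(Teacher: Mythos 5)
The paper does not prove this proposition at all: it is quoted verbatim from Catanese (\cite{Ca00}, Theorem 3.4) as a known result, so there is no in-paper argument to compare against. Your proof is correct and is the standard one: freeness of the $G$-action makes $\pi\colon C\times D\to S$ a degree-$|G|$ \'etale cover, $e$ and $K^2$ are multiplicative under such covers, and the K\"unneth/product computations on $C\times D$ give $e(C\times D)=4(g(C)-1)(g(D)-1)$ and $K_{C\times D}^2=8(g(C)-1)(g(D)-1)$; note that the argument applies equally in the mixed case, since only freeness of the action is used. One small caveat: in your first route for $\chi(\mathcal{O}_S)$, the fact that $\pi_*\mathcal{O}_{C\times D}$ is locally free of rank $|G|$ does not by itself yield $\chi(\pi_*\mathcal{O}_{C\times D})=|G|\,\chi(\mathcal{O}_S)$, because $\chi$ of a locally free sheaf depends on its Chern classes (one needs Grothendieck--Riemann--Roch, or the observation that $\mathrm{td}(C\times D)=\pi^*\mathrm{td}(S)$, to conclude). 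Your second route via Noether's formula $\chi(\mathcal{O}_S)=(K_S^2+e(S))/12$ is airtight and makes the first route unnecessary, so the proposition is fully established.
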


\begin{proposition}\label{q}
Let $S\cong\frac{C\times D}{G}$ be a surface isogenous to a product of unmixed type.  Then
\begin{equation*}
q(S)=g\left(C/G\right)+g\left(D/G\right).
\end{equation*}
\end{proposition}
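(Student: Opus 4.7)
The plan is to compute $q(S)=h^{1,0}(S)$ by pulling back to the étale cover $C\times D\to S$ and taking $G$-invariants, then decomposing via Künneth and identifying invariants on each factor with forms on the quotient curves.

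First I would use that, since $G$ acts freely on $C\times D$ by definition, the quotient map $\pi\colon C\times D\to S$ is étale. Hence $\pi^{*}\Omega^{1}_{S}\cong \Omega^{1}_{C\times D}$ and pullback gives a canonical identification
\begin{equation*}
H^{0}(S,\Omega^{1}_{S})\;\cong\;H^{0}(C\times D,\Omega^{1}_{C\times D})^{G}.
\end{equation*}
Next, the Künneth-type splitting for $(1,0)$-forms on a product of curves reads
\begin{equation*}
H^{0}(C\times D,\Omega^{1}_{C\times D})\;=\;p_{C}^{*}H^{0}(C,\Omega^{1}_{C})\;\oplus\;p_{D}^{*}H^{0}(D,\Omega^{1}_{D}),
\end{equation*}
where $p_{C},p_{D}$ are the two projections.

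The second step is to use the unmixed hypothesis $G=G^{0}$ in a crucial way. Because every $g\in G$ acts diagonally, it respects each of the two summands above: the action commutes with both $p_{C}$ and $p_{D}$. Therefore the $G$-invariants split:
\begin{equation*}
H^{0}(C\times D,\Omega^{1}_{C\times D})^{G}\;=\;H^{0}(C,\Omega^{1}_{C})^{G}\;\oplus\;H^{0}(D,\Omega^{1}_{D})^{G}.
\end{equation*}
(Here I am using that the minimal realization makes $G$ act faithfully on each factor, so the restriction of the action to each curve is a well-defined group action.)

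Finally I would identify each summand with the holomorphic $1$-forms on the quotient curve. This is the one genuinely non-trivial point: although $G$ can have fixed points on $C$ (freeness is only assumed on the product), it is standard that for a finite group acting on a smooth projective curve the natural map $\mu_{C}\colon C\to C/G$ induces an isomorphism
\begin{equation*}
\mu_{C}^{*}\colon H^{0}(C/G,\Omega^{1}_{C/G})\;\stackrel{\sim}{\longrightarrow}\;H^{0}(C,\Omega^{1}_{C})^{G},
\end{equation*}
and similarly for $D$. Taking dimensions gives $\dim H^{0}(C,\Omega^{1}_{C})^{G}=g(C/G)$ and $\dim H^{0}(D,\Omega^{1}_{D})^{G}=g(D/G)$, whence
\begin{equation*}
q(S)\;=\;g(C/G)+g(D/G),
\end{equation*}
as claimed. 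The main conceptual obstacle is the last descent statement for non-free actions on each curve; everything else is essentially formal once the unmixed hypothesis is invoked to split the Künneth decomposition $G$-equivariantly.
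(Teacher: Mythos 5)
Your proof is correct; the paper states Proposition \ref{q} without proof, as a standard fact, and your argument --- identifying $H^0(S,\Omega^1_S)$ with $H^0(C\times D,\Omega^1_{C\times D})^G$ via the \'etale quotient, splitting by K\"unneth (which the diagonal, i.e.\ unmixed, action preserves), and descending invariant forms to $C/G$ and $D/G$ --- is exactly the expected one. The only non-formal step, namely $H^0\bigl(C/G,\Omega^1_{C/G}\bigr)\cong H^0\bigl(C,\Omega^1_C\bigr)^G$ for a possibly ramified quotient of curves, is correctly identified by you and is indeed standard (equivalently one could argue on $H^1(\cdot,\mathbb{Q})^G$ via transfer or the Chevalley--Weil/Broughton formula), so there is no gap.
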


In this paper we focus our attention on regular surfaces isogenous to a higher product of unmixed type with $\chi(\mathcal{O}_S)=2$:

\begin{proposition}\label{diamond}
Let $S$ be a regular surface isogenous to a higher product with $\chi(\mathcal{O}_S)=2$.  Then the Hodge diamond is fixed:
\begin{equation*}
\begin{array}{ccccc}
&&1&&\\
&0&&0&\\
1&&4&&1\\
&0&&0&\\
&&1&&
\end{array}
\end{equation*}
\end{proposition}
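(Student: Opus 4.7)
The plan is to fill in each entry of the Hodge diamond using the hypotheses together with Proposition \ref{InvariantSurface}, Hodge symmetry and Serre duality; there is no serious obstacle here, it is essentially bookkeeping.

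First I would handle the Hodge numbers determined directly by regularity. Since $S$ is connected, $h^{0,0}=h^{2,2}=1$. By assumption $q(S)=h^{1,0}(S)=0$, and Hodge symmetry gives $h^{0,1}(S)=0$. Serre duality on the surface yields $h^{2,1}(S)=h^{1,0}(S)=0$ and $h^{1,2}(S)=h^{0,1}(S)=0$. This already fills in the outer entries and the off-diagonal zeros.

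Next I would compute $p_g=h^{2,0}$. From the definition of the holomorphic Euler characteristic of a surface,
\begin{equation*}
\chi(\mathcal{O}_S)=1-q(S)+p_g(S)=1+p_g(S),
\end{equation*}
so the hypothesis $\chi(\mathcal{O}_S)=2$ forces $p_g(S)=1$, and by Hodge symmetry $h^{0,2}(S)=1$ as well.

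Finally I would pin down $h^{1,1}$ via the topological Euler characteristic. Proposition \ref{InvariantSurface} gives $e(S)=4\chi(\mathcal{O}_S)=8$. On the other hand, with the values obtained above,
\begin{equation*}
e(S)=b_0-b_1+b_2-b_3+b_4=2+b_2,\qquad b_2=h^{2,0}+h^{1,1}+h^{0,2}=2+h^{1,1}(S).
\end{equation*}
Combining these two expressions yields $h^{1,1}(S)=4$, which completes the diamond. The only data used are the regularity $q=0$, the value $\chi(\mathcal{O}_S)=2$, and the Euler characteristic relation from Proposition \ref{InvariantSurface}; so the proof is essentially a one-line verification once those are in hand.
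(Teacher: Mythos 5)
Your proof is correct and follows essentially the same route as the paper: regularity and $\chi(\mathcal{O}_S)=2$ give $p_g=1$ and all the off-diagonal zeros, and then $e(S)=4\chi(\mathcal{O}_S)=8$ from Proposition \ref{InvariantSurface} pins down $h^{1,1}=4$, which is exactly the paper's computation $h^{1,1}=e(S)-2+4q-2p_g$ specialized to $q=0$. (The only cosmetic slip is that $h^{2,1}=h^{0,1}$ and $h^{1,2}=h^{1,0}$ are the Serre-duality identifications, with Hodge symmetry supplying the other pairing, but since all of these vanish the conclusion is unaffected.)
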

\begin{proof}
By hypothesis we have $h^{1,0}(S)=0$ and $h^{2,0}=\chi(\mathcal{O}_S)-1=1$: we just have to compute $h^{1,1}(S)$.  By Proposition \ref{InvariantSurface} $e(S)=4\chi(\mathcal{O}_S)=8$ and then 
\begin{equation*}
h^{1,1}(S)=e(S)-2+4q(S)-2p_g(S)=4
\end{equation*}
\end{proof}
Regular surfaces isogenous to a higher product of unmix type with $\chi(\mathcal{O}_s)=2$ have been studied and classified in \cite{Gl11}: see section \ref{cohom} for the details.

\subsection{Spherical system of generators}
We introduce here the notion of spherical systems of generators and we relate them with ramified coverings of the sphere.
We use the same notation of \cite{BCG06}.

\begin{definition}
Let $G$ be a group and $r\in\mathbb{N}$ with $r\ge 2$. An $r$-tuple $T=[g_1,...,g_r]$ of elements in $G$ is called spherical system of generators of $G$ if $g_1,...,g_r$ is a system of generators of $G$ and we have $g_1\cdot...\cdot g_r=Id_G$.\\
We call $\ell(T):=r$ length of $S$.
\end{definition}
\begin{definition}
Let $A=[m_1,\,...,\,m_r]\in\mathbb{N}^r$ be an $r$-tuple of natural numbers $2\le m_1\le ...\le m_r$. A spherical system of generators $T=[g_1,...,g_r]$ is said to be of type $A=[m_1,...,m_r]$ if there is a permutation $\tau\in\mathcal{S}_r$ such that $ord(g_i)=m_{\tau(i)}$, for $i=1,\,...,\,r$.
\end{definition}
\begin{proposition}\label{RETspherical}
Let $G$ be a finite group and $B=\{b_1,\,...,\,b_r\}\subset\mathbb{P}^1$.  Then there is a correspondence between:
\begin{itemize}
\item Spherical system of generators $T$ of $G$ with length $\ell(T)=r$;
\item Galois covering $f:C\to \mathbb{P}^1$ with branch points $B$.
\end{itemize}
\end{proposition}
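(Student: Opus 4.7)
The plan is to set up the bijection via the classical presentation of $\pi_1$ of a punctured sphere and then invoke the Riemann Existence Theorem to pass between topological and algebraic branched covers. First I would fix a base point $b_0\in\mathbb{P}^1\setminus B$ together with a standard geometric basis of loops $\gamma_1,\ldots,\gamma_r$, where each $\gamma_i$ is a small positively oriented loop around $b_i$ joined to $b_0$ along non-crossing paths. With this choice
$$\pi_1(\mathbb{P}^1\setminus B,\,b_0)\;\cong\;\langle\,\gamma_1,\ldots,\gamma_r\ \mid\ \gamma_1\cdots\gamma_r=1\,\rangle.$$

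From a Galois covering to a spherical system: given $f:C\to\mathbb{P}^1$ a Galois covering with deck group $G$ and branch locus contained in $B$, the restriction $f^{-1}(\mathbb{P}^1\setminus B)\to\mathbb{P}^1\setminus B$ is a connected unramified Galois $G$-cover. By the standard Galois correspondence for covering spaces it is classified by a surjective homomorphism $\varphi:\pi_1(\mathbb{P}^1\setminus B,b_0)\twoheadrightarrow G$. Setting $g_i:=\varphi(\gamma_i)$, the elements $g_1,\ldots,g_r$ generate $G$ (since $\varphi$ is surjective) and satisfy $g_1\cdots g_r=\varphi(\gamma_1\cdots\gamma_r)=Id_G$, so $T=[g_1,\ldots,g_r]$ is a spherical system of generators of length $r$.

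Conversely, given such a spherical system $T=[g_1,\ldots,g_r]$, the relation $g_1\cdots g_r=Id_G$ implies that the assignment $\gamma_i\mapsto g_i$ extends to a well-defined homomorphism $\varphi:\pi_1(\mathbb{P}^1\setminus B,b_0)\to G$, which is surjective because the $g_i$ generate $G$. Its kernel singles out a connected unramified Galois $G$-cover $C^\circ\to\mathbb{P}^1\setminus B$. The key analytic/algebraic input is the Riemann Existence Theorem: it extends $C^\circ$ uniquely to a compact Riemann surface $C$ with a ramified Galois covering $f:C\to\mathbb{P}^1$ restricting to the original $C^\circ$, and via GAGA $C$ is in fact a smooth projective curve. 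Finally one checks that the two procedures are mutually inverse up to isomorphism of covers fixing $\mathbb{P}^1$, which is essentially a tautology once the base data are fixed.

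The main obstacle is not combinatorial: everything on the group-theoretic side is a direct consequence of the explicit presentation of $\pi_1(\mathbb{P}^1\setminus B,b_0)$ and the dictionary between pointed covering spaces and subgroups of the fundamental group. The real content lies in the Riemann Existence Theorem, which guarantees that a purely topological cover prescribed by monodromy data indeed comes from an algebraic branched covering of $\mathbb{P}^1$; granting this, the correspondence stated in the proposition is immediate.
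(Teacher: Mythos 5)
Your proposal is correct and follows exactly the route the paper takes: the paper simply cites the Riemann Existence Theorem as in Miranda (Sections III.3 and III.4), and your argument is the standard spelling-out of that reference, passing through the presentation of $\pi_1(\mathbb{P}^1\setminus B)$, the monodromy homomorphism, and the extension of the unramified cover to a branched Galois cover. No discrepancy to report.
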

\begin{proof}
It follows from the Riemann Existence Theorem as explained in \cite[Section III.3 and III.4]{Mi95}.
\end{proof}

\begin{remark}
The curve $C$ is completely determined by the branch points $B$ and by the spherical system of generators $T$.  In particular the genus can be computed using the Riemann-Hurwitz formula:
\begin{equation*}
g(C)=1-d+\sum_{i=1}^r\frac{d}{2m_i}(m_i-1)
\end{equation*}
where $A=[m_1,\,...,\,m_r]$ is the type of $T$.
\end{remark}
\begin{remark}
The correspondence of Proposition \ref{RETspherical} is not one-to-one: indeed distinct spherical systems of generators could determine the same covering.\\
For example let $T_1=[g_1,\,...,\,g_r]$ be a spherical system of generators  of $G$ of type $A$ and let $h\in G$.  Consider $T_2=[g_1^h,\,...,\,g_r^h]$ where $g^h=h^{-1}gh$: $T_2$ is a spherical system of generators of type $A$ and determines an isomorphic covering.  In particular $T_2$ determines exactly the same covering, not only an isomorphic one, and it corresponds to a different choise of the monodromy representation.
\end{remark}

Let $S=\frac{C\times D}{G}$ be a surface isogenous to a higher product of unmixed type with $q(S)=0$.  Then by Proposition \ref{q} we get two ramified coverings of the sphere $f:C\to\mathbb{P}^1$ and $h:D\to\mathbb{P}^1$.  Notice that, from a topological point of view, the surface $S$ is determined by $f$ and $h$ under the further condition that the group $G$ acts freely on the product $C\times D$.

\begin{definition}
Let $T=[g_1,...,g_r]$ be a spherical system of generators of $G$.  We denote by $\Sigma(T)$ the union of all conjugates of the cyclic subgroups generated by the elements $g_1,...,g_r$:
\begin{equation*}
\Sigma(T):=\Sigma([g_1,...,g_r])=\bigcup_{g\in G}\bigcup_{j=0}^\infty\bigcup_{i=1}^r\{g\cdot g_i^j g^{-1}\}.
\end{equation*}
A pair of spherical systems of generators $(T_1,T_2)$ of $G$ is called disjoint if
\begin{equation*}
\Sigma(T_1)\cap\Sigma(T_2)=\{Id_G\}.
\end{equation*}
\end{definition}

\begin{proposition}\label{disjoint-free}
Let $T_1$ and $T_2$ be two spherical systems of generators of $G$ and let $\pi: C\times D\to \frac{C\times D}{G}$ be the induced covering where $G$ acts on the product via the diagonal action.  Then the following conditions are equivalent:
\begin{itemize}
\item $\pi$ is an étale covering, i.e.\ the action of $G$ is free;
\item $(T_1,T_2)$ is a disjoint pair of spherical systems of generators of $G$. 
\end{itemize}
\end{proposition}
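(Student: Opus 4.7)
My plan is to reduce the freeness condition on the diagonal action to a pointwise stabiliser statement on each factor, and then to identify the set of ``stabilising elements'' on each curve with the sets $\Sigma(T_i)$.

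First I would observe that, because the action on $C\times D$ is diagonal, an element $g\in G$ fixes a point $(c,d)$ if and only if $g\cdot c=c$ and $g\cdot d=d$. Hence the action of $G$ on $C\times D$ fails to be free if and only if there exists some $g\neq Id_G$ that simultaneously stabilises a point of $C$ and a point of $D$. Equivalently, writing
\[
\mathrm{Fix}(C):=\{g\in G\mid g\cdot c=c\text{ for some }c\in C\},
\]
and analogously $\mathrm{Fix}(D)$, freeness is equivalent to $\mathrm{Fix}(C)\cap\mathrm{Fix}(D)=\{Id_G\}$.

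The main step, which I expect to be the technical heart of the proof, is to identify $\mathrm{Fix}(C)$ with $\Sigma(T_1)$ (and similarly $\mathrm{Fix}(D)$ with $\Sigma(T_2)$). For this I would use the Riemann Existence Theorem (Proposition \ref{RETspherical}) together with the standard description of Galois coverings of $\mathbb{P}^1$. Under the correspondence of Proposition \ref{RETspherical}, if $T_1=[g_1,\dots,g_r]$ and $b_1,\dots,b_r\in\mathbb{P}^1$ are the branch points of $f:C\to\mathbb{P}^1$, then for a suitable choice of a point $c_i$ in the fibre $f^{-1}(b_i)$ the stabiliser $\mathrm{Stab}_G(c_i)$ is exactly the cyclic group $\langle g_i\rangle$. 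Every other point of $f^{-1}(b_i)$ is of the form $h\cdot c_i$ for some $h\in G$, and its stabiliser is $h\langle g_i\rangle h^{-1}$. Points of $C$ outside the ramification locus have trivial stabiliser, since the covering is étale there. Collecting all such stabilisers gives exactly
\[
\mathrm{Fix}(C)=\bigcup_{h\in G}\bigcup_{i=1}^r h\langle g_i\rangle h^{-1}=\Sigma(T_1).
\]
The same argument applied to $h:D\to\mathbb{P}^1$ and $T_2$ yields $\mathrm{Fix}(D)=\Sigma(T_2)$.

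Combining the two steps, the diagonal action of $G$ on $C\times D$ is free if and only if $\Sigma(T_1)\cap\Sigma(T_2)=\{Id_G\}$, which is precisely the disjointness of $(T_1,T_2)$. The obstacle is essentially bookkeeping inside the Riemann Existence Theorem correspondence: one must be careful that the spherical system of generators is fixed up to the simultaneous conjugation described in the remark after Proposition \ref{RETspherical}, so that the union over all $h\in G$ in the definition of $\Sigma(T)$ is exactly what is needed to make the identification $\mathrm{Fix}(C)=\Sigma(T_1)$ independent of the auxiliary choices made in the monodromy picture.
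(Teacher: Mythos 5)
Your proposal is correct and follows essentially the same route as the paper: the paper's proof consists precisely of the observation that $g$ fixes a point of $C$ (resp.\ $D$) if and only if $g\in\Sigma(T_1)$ (resp.\ $\Sigma(T_2)$), so that $g$ fixes a point of $C\times D$ under the diagonal action if and only if $g\in\Sigma(T_1)\cap\Sigma(T_2)$. You merely spell out the stabiliser computation behind that observation (conjugates of $\langle g_i\rangle$ as stabilisers of points over the branch points, via the Riemann Existence Theorem), which the paper leaves implicit.
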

\begin{proof}
We observe that an element $g\in G$ fixes a point in $C$ if and only if $g\in\Sigma(T_1)$ and it fixes a point in $D$ if and only if $g\in\Sigma(T_2)$.  Then $g$ fixes a point in $C\times D$ if and only if $g\in \Sigma(T_1)\cap\Sigma(T_2)$.
\end{proof}

\begin{definition}
An unmixed ramification structure for $G$ is a disjoint pair of spherical system of generators $(T_1,T_2)$ of $G$.\\  
Let $A_1=[m_{(1,1)},...,m_{(1,r_1)}]$ and $A_2=[m_{(2,1)},...,m_{(2,r_2)}]$ be respectively a $r_1$-tuple and a $r_2$-tuple of natural numbers with $2\le m_{(1,1)}\le...\le m_{(1,r_1)}$ and $2\le m_{(2,1)}\le...\le m_{(2,r_2)}$.  We say that the unmixed ramification strucure $(T_1,T_2)$ is of type $(A_1,A_2)$ if $T_1$ is of type $A_1$ and $T_2$ is of type $A_2$.
\end{definition}

Putting together Proposition \ref{RETspherical} and Proposition \ref{disjoint-free} we get a correspondence between unmixed ramification structures and surfaces isogenous to a product of unmixed type.  As already observed, this correspondence is not one-to-one, but it works well in one direction: given an unmixed ramification structure it  is uniquely defined a surface isogenous to a product of unmixed type.

\subsection{Irreducible rational representation}
We recall some results about irreducible complex representation.  A full discussion with proofs can be found in \cite{Se77}.

Let $G$ be a finite group of order $N$.  We denote by $\rho_i: G\to GL(V_i)$, $i=1,\,...,\,m$ its irreducible complex representations, where $m$ is the number of conjugacy classes in $G$.  We usually denote by $\rho_1$ the trivial representation.
Given a complex representation $\rho: G\to GL(V)$ we denote by $n_\rho(\rho_i)$ the multiplicity of $\rho_i$ in $\rho$.  Then we get:
\begin{equation*}
\rho=\bigoplus_{i=1}^m n_\rho(\rho_i)\rho_i,
\end{equation*}
Let $\chi_i: G\to \mathbb{C}$ be the character associated to the irreducible complex representation $\rho_i$: the character field $K_i$ is the field $\mathbb{Q}(\chi_i(g))_{g\in G}$.  As $\rho_i(g)\in GL(V)$ has finite order, its eigenvalues are roots of unity, hence $K_i$ is a subfield of $\mathbb{Q}(\xi_N)$ where $\xi_N$ is a primitive $N$-th root of unity.

\begin{proposition}\label{GaloisAction1}
Let $G$ be a finite group of order $N$ and let $\rho_i:G\to GL(V_i)$ be an irreducible complex representation of $G$ with associated character field $K_i$.  For every $\sigma\in Gal(K_i/\mathbb{Q})$ there exists an unique irreducible complex representation $\rho_j:G\to GL(V_j)$ with character $\chi_j=\sigma(\chi_i)$.
Thus for $\sigma, \rho_i$ and $\rho_j$ as above we set $\sigma(\rho_i)=\rho_j$.\\
In the same way we can define an action of the whole group $Gal_N$ on the irreducible complex representations.
\end{proposition}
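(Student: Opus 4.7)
The plan is to prove existence, irreducibility, and uniqueness of $\rho_j$ separately, with the main substance lying in producing $\rho_j$ from $\rho_i$ by a Galois twist.

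First, I would fix notation and reduce to the cyclotomic setting. Since $\rho_i(g)$ has finite order dividing $N$, every eigenvalue of $\rho_i(g)$ is an $N$-th root of unity, so $\chi_i(g)\in\mathbb{Q}(\xi_N)$ and $K_i\subseteq\mathbb{Q}(\xi_N)$. Because $\mathbb{Q}(\xi_N)/\mathbb{Q}$ is Galois, any $\sigma\in Gal(K_i/\mathbb{Q})$ lifts to some $\tilde\sigma\in Gal(\mathbb{Q}(\xi_N)/\mathbb{Q})$. I will work with $\tilde\sigma$ throughout and define the class function $\tilde\sigma(\chi_i):G\to\mathbb{C}$ by $\tilde\sigma(\chi_i)(g):=\tilde\sigma(\chi_i(g))$; at the end I will check the construction depends only on $\sigma$.

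For existence, I would invoke the fact (due to Brauer) that every irreducible complex representation of $G$ can be realized with matrix entries in $\mathbb{Q}(\xi_N)$: choose such a realization $\rho_i:G\to GL_n(\mathbb{Q}(\xi_N))$. Applying $\tilde\sigma$ entry-wise to the matrices $\rho_i(g)$ yields a new map $\rho_i^{\tilde\sigma}:G\to GL_n(\mathbb{Q}(\xi_N))$ which is still a group homomorphism (since $\tilde\sigma$ is a ring automorphism) and whose character is precisely $\tilde\sigma(\chi_i)$. Thus $\tilde\sigma(\chi_i)$ is the character of an honest complex representation $\rho_i^{\tilde\sigma}$. (As an alternative, one may use $\tilde\sigma(\xi_N)=\xi_N^k$ with $\gcd(k,N)=1$ to write $\tilde\sigma(\chi_i)(g)=\chi_i(g^k)$ and verify directly that the right-hand side is a virtual character with non-negative multiplicities.)

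For irreducibility, I would compute the standard inner product
\begin{equation*}
\langle\tilde\sigma(\chi_i),\tilde\sigma(\chi_i)\rangle=\frac{1}{|G|}\sum_{g\in G}\tilde\sigma(\chi_i(g))\overline{\tilde\sigma(\chi_i(g))}.
\end{equation*}
Since complex conjugation coincides on roots of unity with the Galois automorphism sending $\xi_N\mapsto\xi_N^{-1}$, it commutes with $\tilde\sigma$ on $\mathbb{Q}(\xi_N)$, so the sum equals $\tilde\sigma\bigl(\langle\chi_i,\chi_i\rangle\bigr)=\tilde\sigma(1)=1$. Hence $\rho_i^{\tilde\sigma}$ is irreducible, and I set $\rho_j:=\rho_i^{\tilde\sigma}$. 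Uniqueness is then immediate: the irreducible complex representations of $G$ are classified up to isomorphism by their characters, so any other irreducible $\rho_{j'}$ with character $\sigma(\chi_i)$ is isomorphic to $\rho_j$. Finally, the construction is independent of the lift $\tilde\sigma$: two lifts differ by an element of $Gal(\mathbb{Q}(\xi_N)/K_i)$, which fixes $\chi_i$ pointwise, so the resulting character depends only on $\sigma$.

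The main obstacle is the existence step, specifically the need to promote the class function $\sigma(\chi_i)$ to an actual character; everything else is formal. Invoking Brauer's realizability over $\mathbb{Q}(\xi_N)$ makes this clean, but if one prefers to avoid Brauer, the identity $\tilde\sigma(\chi_i)(g)=\chi_i(g^k)$ together with an orthogonality argument showing that this class function pairs to $0$ or $1$ with every irreducible character of $G$ gives a self-contained substitute.
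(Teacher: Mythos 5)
Your proposal is correct. Note, however, that the paper does not actually prove this proposition: it is recalled as a standard fact at the start of the subsection, with the proofs deferred to Serre's book \cite{Se77}, so the comparison is with the standard reference rather than with an argument in the paper. Your route is precisely the classical one: observe $K_i\subseteq\mathbb{Q}(\xi_N)$, lift $\sigma$ to $\tilde\sigma\in Gal(\mathbb{Q}(\xi_N)/\mathbb{Q})$, realize $\rho_i$ by matrices over $\mathbb{Q}(\xi_N)$ and twist entrywise to get a genuine representation with character $\tilde\sigma(\chi_i)$, then get irreducibility from $\langle\tilde\sigma\chi_i,\tilde\sigma\chi_i\rangle=\tilde\sigma\left(\langle\chi_i,\chi_i\rangle\right)=1$ (using that complex conjugation lies in the abelian group $Gal(\mathbb{Q}(\xi_N)/\mathbb{Q})$ and hence commutes with $\tilde\sigma$), uniqueness from the fact that characters determine irreducible representations, and independence of the lift because the lifts differ by elements of $Gal(\mathbb{Q}(\xi_N)/K_i)$, which fix $\chi_i$. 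Two small remarks: invoking Brauer's realizability over $\mathbb{Q}(\xi_N)$ is heavier than necessary — it suffices that $\rho_i$ is realizable over some number field $L$ Galois over $\mathbb{Q}$ and containing $K_i$ (e.g.\ by decomposing the regular representation over $\overline{\mathbb{Q}}$), and one then twists by an extension of $\sigma$ to $L$; and your alternative sketch via $\tilde\sigma(\chi_i)(g)=\chi_i(g^k)$ is left incomplete (showing this class function is an actual character is exactly the nontrivial point), but it is not needed since the main route is complete. The final assertion of the proposition, the action of the full group $Gal_N=Gal(\mathbb{Q}(\xi_N)/\mathbb{Q})$, follows immediately from your construction since it is carried out with $\tilde\sigma\in Gal_N$ in the first place.
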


\begin{definition}
Let $\rho_i:G\to GL(V_i)$ be an irreducible complex representation of $G$ with character field $K_i$.  The dual representation of $\rho_i$ is the irreducible complex representation $\overline{\rho_i}:=\tilde\sigma(\rho_i)$ where $\tilde{\sigma}$ is the complex conjugation.\\
We say that $\rho_i$ is self-dual if $\rho_i=\overline{\rho_i}$ or, equivalently, if $K_i\subseteq\mathbb{R}$.
\end{definition}

The action of $Gal_N$ splits the set of the irreducible complex representations into distinct orbits such that if two irreducible complex representations $\rho_i$ and $\rho_j$ are in the same orbit then $K_i=K_j$.

\begin{proposition}\label{RationalRepresentation}
Let $G$ be a finite group of order $N$ and let $\tau:G\to GL(W)$ be an irreducible rational representation.  Then there is a unique $Gal_N$-orbit of irreducible complex representations
\begin{equation*}
\{\sigma(\rho_i)\}_{\sigma\in Gal(K_i/\mathbb{Q})},\quad \rho_i:G\to GL(V_i)
\end{equation*}  
and a positive integer $s$, called Schur index of $\rho_i$, such that
\begin{equation}\label{Qrap}
\tau_\mathbb{C}:=\tau\otimes_\mathbb{Q}\mathbb{C}=\bigoplus_{\sigma\in Gal(K_i/\mathbb{Q})}s\cdot\sigma(\rho_i).
\end{equation}
Conversely each irreducible complex representation $\rho_i$ determines an irreducible rational representation $\tau:G\to GL(W)$ such that the equality \eqref{Qrap} holds.
\end{proposition}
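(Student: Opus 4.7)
The plan is to deduce the statement from the Wedderburn decomposition of the rational group algebra $\mathbb{Q}[G]$ combined with the $\mathrm{Gal}_N$-action on complex representations supplied by Proposition \ref{GaloisAction1}.

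First I would start with an irreducible rational representation $\tau:G\to GL(W)$ and extend scalars to $\mathbb{C}$. Since $\tau_\mathbb{C}$ is a complex representation of $G$, it decomposes as $\tau_\mathbb{C}=\bigoplus_{i=1}^m n_i\rho_i$ for some nonnegative integers $n_i$. The character $\chi_\tau$ is rational-valued because $\tau$ is defined over $\mathbb{Q}$, and it coincides with $\chi_{\tau_\mathbb{C}}=\sum_i n_i\chi_i$. Applying any $\sigma\in\mathrm{Gal}_N$ to this identity and using that $\sigma$ permutes the $\chi_i$ according to Proposition \ref{GaloisAction1}, one obtains $n_{\sigma(i)}=n_i$ for all $i$ and all $\sigma$. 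Hence the multiplicities are constant on $\mathrm{Gal}_N$-orbits, which already gives an expression of the form $\tau_\mathbb{C}=\bigoplus_{\mathcal{O}}s_\mathcal{O}\bigl(\sum_{\rho\in\mathcal{O}}\rho\bigr)$.

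Next I would show that exactly one orbit actually appears. For each $\mathrm{Gal}_N$-orbit $\mathcal{O}$ of complex irreducibles, the element
\begin{equation*}
e_\mathcal{O}=\frac{1}{|G|}\sum_{g\in G}\Bigl(\sum_{\rho\in\mathcal{O}}\dim(\rho)\,\overline{\chi_\rho(g)}\Bigr)g
\end{equation*}
has Galois-invariant coefficients, hence lies in $\mathbb{Q}[G]$, and it is a central idempotent because the sum of the usual complex central idempotents over the orbit is Galois-stable. These $e_\mathcal{O}$ are mutually orthogonal and sum to $1$, so they yield a decomposition of $W$ as a rational $G$-module. Irreducibility of $\tau$ then forces a unique orbit $\mathcal{O}_i=\{\sigma(\rho_i)\}_{\sigma\in\mathrm{Gal}(K_i/\mathbb{Q})}$ to act nontrivially, and all multiplicities in that orbit agree by the previous step; call this common multiplicity $s$, a positive integer. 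This is exactly \eqref{Qrap}.

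For the converse, given $\rho_i$ I would look at the two-sided ideal $A_i:=\mathbb{Q}[G]\cdot e_{\mathcal{O}_i}$. By Wedderburn--Artin, $A_i$ is a simple $\mathbb{Q}$-algebra, isomorphic to a matrix algebra $M_k(D)$ over a division algebra $D$ whose center is the character field $K_i$. Any simple left $A_i$-module $W$ is then an irreducible rational representation of $G$ whose complexification decomposes, by construction of $e_{\mathcal{O}_i}$, as a multiple of $\sum_{\sigma}\sigma(\rho_i)$ with multiplicity equal to the Schur index $s$ of $D$ over $K_i$; uniqueness of the orbit attached to $W$ follows from the decomposition into the $A_j$'s.

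The step I expect to be the main obstacle is showing that $s$ is a positive integer (rather than, say, a rational) and that the orbit sum appears uniformly in $\tau_\mathbb{C}$; this is the content of the Schur index and really needs the Wedderburn structure of $A_i$ as a matrix algebra over a division algebra with center $K_i$, rather than purely character-theoretic manipulations. Once that is in hand, everything else is bookkeeping of characters and idempotents.
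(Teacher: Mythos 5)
Your argument is correct, and it is essentially the standard proof of this fact: the paper itself gives no proof of Proposition \ref{RationalRepresentation} (it refers the reader to Serre \cite{Se77}), so there is nothing to diverge from. Your two ingredients are exactly the ones the paper reconstructs later in Section \ref{GAD}: the Galois orbit sums of the complex central idempotents are the paper's rational idempotents $q_j$, and the division algebra $D$ with center $K_i$ and $\dim_{K_i}D=s^2$ is the paper's $\mathbb{D}=End_G(W)$, whose dimension count $\dim_\mathbb{Q}\mathbb{D}=[K_i:\mathbb{Q}]s^2$ encodes the identification of the uniform multiplicity with the Schur index. One small point worth making explicit in your converse step: to know that $A_i=\mathbb{Q}[G]e_{\mathcal{O}_i}$ is simple (so that Wedderburn--Artin applies and its simple module is unique up to isomorphism), you should note that $e_{\mathcal{O}_i}$ is a \emph{primitive} central idempotent of $\mathbb{Q}[G]$; this follows in one line, since any central idempotent of $\mathbb{Q}[G]$ is a sum of complex central idempotents $p_i$ that is stable under $Gal_N$, hence a sum of full orbit sums $e_\mathcal{O}$. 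With that observation added, the chain ``constant multiplicities on orbits, single orbit by irreducibility, multiplicity $=$ Schur index via $M_k(D)$'' is complete and proves both directions of the statement.
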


\begin{corollary}
Let $\rho:G\to GL(V)$ be a self-dual complex representation such that $K_\rho=\mathbb{Q}$.  Then there exists a rational representation $\tau:G\to GL(W)$ and a positive integer $s$ such that $\tau\otimes_\mathbb{Q}\mathbb{C}=s\cdot\rho$.
\end{corollary}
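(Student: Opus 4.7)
The plan is to derive this as a direct specialization of Proposition \ref{RationalRepresentation}. Reading the hypothesis $K_\rho = \mathbb{Q}$, I would first observe that it already forces the self-duality: the character field lies in $\mathbb{Q} \subseteq \mathbb{R}$, so complex conjugation acts trivially on $\chi_\rho$, giving $\overline{\rho} = \rho$. Thus the two hypotheses are not independent, and the real content is the triviality of the character field.

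Next I would reduce to the irreducible case. Decompose $\rho = \bigoplus_i n_i \rho_i$ into irreducible complex representations; since $\chi_\rho = \sum n_i \chi_i$ is $\mathbb{Q}$-valued, the action of $Gal_N$ (via Proposition \ref{GaloisAction1}) must permute the summands $n_i \rho_i$ consistently, and a short bookkeeping argument shows that each irreducible constituent $\rho_i$ appearing in $\rho$ has $K_{\rho_i} = \mathbb{Q}$ as well. So it suffices to prove the statement assuming $\rho$ itself is irreducible.

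For irreducible $\rho$ with $K_\rho = \mathbb{Q}$, I apply Proposition \ref{RationalRepresentation} directly. The $Gal_N$-orbit of $\rho$ has cardinality $[K_\rho : \mathbb{Q}] = 1$, so it reduces to $\{\rho\}$ itself. The Proposition then guarantees the existence of an irreducible rational representation $\tau : G \to GL(W)$ and a positive integer $s$ (the Schur index of $\rho$) such that
\begin{equation*}
\tau \otimes_\mathbb{Q} \mathbb{C} = \bigoplus_{\sigma \in Gal(K_\rho / \mathbb{Q})} s \cdot \sigma(\rho) = s \cdot \rho,
\end{equation*}
which is exactly the claim.

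There is no real obstacle here; the corollary is essentially an unpacking of Proposition \ref{RationalRepresentation} in the case of a trivial Galois orbit. The only minor point worth handling carefully, if $\rho$ is allowed to be reducible, is the reduction to irreducible constituents via the Galois action on characters; once that is observed, the Proposition closes the argument immediately.
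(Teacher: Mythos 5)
Your main argument is exactly the intended one: since the paper's definition gives $K_\rho\subseteq\mathbb{R}$ automatically when $K_\rho=\mathbb{Q}$, and since $Gal(K_\rho/\mathbb{Q})$ is then trivial, Proposition \ref{RationalRepresentation} applied to an irreducible $\rho$ immediately yields $\tau\otimes_\mathbb{Q}\mathbb{C}=s\cdot\rho$ with $s$ the Schur index; this is precisely how the paper means the corollary to follow (it is stated without proof, as a direct specialization), and it is the only case actually used later.

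The flaw is in your reduction step for reducible $\rho$: the claim that every irreducible constituent of a representation with $\mathbb{Q}$-valued character has character field $\mathbb{Q}$ is false. Take $G=\mathbb{Z}_3$ and $\rho=\rho_2\oplus\rho_3$, the sum of the two nontrivial characters: then $\chi_\rho(g)=\xi_3+\xi_3^2=-1$, so $K_\rho=\mathbb{Q}$ and $\rho$ is self-dual, yet $K_{\rho_2}=K_{\rho_3}=\mathbb{Q}(\xi_3)\neq\mathbb{Q}$. The corollary is still true in such cases, but the correct bookkeeping is different: rationality of $\chi_\rho$ forces the multiplicities $n_i$ to be constant on each $Gal_N$-orbit, so $\rho$ is a sum of complete Galois orbits $\bigoplus_{\sigma\in Gal(K_i/\mathbb{Q})}\sigma(\rho_i)$ with multiplicities $n_O$; by Proposition \ref{RationalRepresentation} each orbit corresponds to an irreducible rational $\tau_O$ with $\tau_O\otimes\mathbb{C}=s_O\bigoplus_\sigma\sigma(\rho_i)$, and setting $s=\operatorname{lcm}_O s_O$ and $\tau=\bigoplus_O (s\,n_O/s_O)\,\tau_O$ gives $\tau\otimes_\mathbb{Q}\mathbb{C}=s\cdot\rho$. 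If instead you read the corollary (as the paper does) with $\rho$ irreducible, your argument needs no reduction at all and coincides with the paper's.
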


Any rational representation $\tau:G\to GL(W)$ can be decomposed as sum of irreducible rational representations, exactly as it happens for the complex ones.  We will write
\begin{equation*}
\tau=\bigoplus^t_{j=1}n_\tau(\tau_j)\tau_j,
\end{equation*}
where $\tau_j:G\to GL(W_j)$, $j=1,...,t$ are the irreducible rational representations of $G$ and $n_\tau(\tau_j)$ is the multiplicity of $\tau_j$ in $\tau$.  As in the complex case, we denote by $\tau_1$ the trivial representation.

\begin{example}\label{quaternion}
Consider the quaternion group $Q_8$:
\begin{equation*}
Q_8=\left\langle -1,\,i,\,j,\,k|\,(-1)^2=1,\,i^2=j^2=k^2=ijk=-1\right\rangle.
\end{equation*}
This is the smallest group with an irreducible representation with Schur index different from one.  The character table of $Q_8$ is:
\begin{equation*}
\begin{array}{c|ccccc}
&1&-1&\pm i&\pm j&\pm k\\
\hline
\chi_1&1&1&1&1&1\\
\chi_2&1&1&1&-1&-1\\
\chi_3&1&1&-1&-1&1\\
\chi_4&1&1&-1&1&-1\\
\chi_5&2&-2&0&0&0
\end{array}
\end{equation*}

In this case any irreducible complex representation $\rho_i$, $i=1,...,5,$ has character field $K_i=\mathbb{Q}$ and then defines a different Galois orbit.
So $G$ has $5$ irreducible rational representations $\tau_j$, $j=1,...,5$.
The Schur index of the first four representations has to be one, because the Schur index divides the dimension of the representation.  Conversely $\rho_5$ has Schur index two.  We can construct the representation $\tau_5$ with $\tau_5\otimes\mathbb{C}=2\rho_5$ setting:
\begin{equation*}\label{repqua}
\tau(i)=\begin{pmatrix} 
0 & -1 & 0 & 0 \\ 
1 & 0 & 0 & 0 \\ 
0 & 0 & 0 & -1\\
0 & 0 & 1 & 0
\end{pmatrix},\qquad
\tau(j)=\begin{pmatrix} 
0 & 0 & -1 & 0 \\ 
0 & 0 & 0 & 1 \\ 
1 & 0 & 0 & 0\\
0 & -1 & 0 & 0
\end{pmatrix}.
\end{equation*}
\end{example}

\subsection{Group Algebra decomposition}\label{GAD}
We describe here the so called group algebra decomposition.  The main idea is the following: let $\tau:G\to GL(W)$ be a rational representantion and let $W$ be a rational Hodge structure such that $\tau(G)\subseteq End_{Hod}(W)$.  Then the action of the group algebra $\mathbb{Q}[G]$ induces a decomposition of $W$ into Hodge subrepresentations.  This result is well known in the contest of complex tori (see \cite[Section 13.4]{BL04}): following the same arguments we prove it for Hodge structures.

Let $G$ be a finite group with irreducible complex representations $\rho_i:G\to GL(V_i)$, $i=1,\,...,\,m$, as in the previous section.
Consider the following elements in $\mathbb{C}[G]$:
\begin{equation*}
p_i=\frac{dim(V_i)}{\#G}\sum_{g\in G}\chi_i(g)g,
\end{equation*}
where $\chi_i$ is the character of $\rho_i$.
These elements $p_1,\,...,\,p_m$ are central idempotents in the group algebra $\mathbb{C}[G]$, i.e.\ $p_i^2=p_i$ and $p_ig=gp_i$ for all $g\in G$.  Moreover we get:
\begin{equation}\label{eqc}
\tilde\rho_i(p_j)=\begin{cases} Id_{V_i}&\mbox{if }i=j,\\
0&\mbox{if } i\ne j.
\end{cases}
\end{equation}

Let us consider the group algebra $\mathbb{Q}(\xi_N)[G]$ where $N$ is the order of $G$ and notice that $p_i\in\mathbb{Q}(\xi_N)[G]$ for all $i=1,...,m$.
There is a natural action of the Galois group $Gal_N$ on $\mathbb{Q}(\xi_N)[G]$ defined by
\begin{equation*}
\sigma\left(\sum a_jg_j\right)=\sum\sigma(a_j)g_j
\end{equation*}
where $\sigma\in Gal_N$.\\
This action agrees with the action defined in Proposition \ref{GaloisAction1}: $\sigma(\rho_i)=\rho_j$ if and only if $\sigma(p_i)=p_j$.

Let $\tau_j:G\to GL(W_j)$ be an irreducible rational representation.  By Proposition \ref{RationalRepresentation} there exists an irreducible complex representation $\rho_i:G\to GL(V_i)$ such that 
\begin{equation*}
\tau_j=\bigoplus_{\sigma\in Gal(K_i/\mathbb{Q})}s\cdot \sigma(\rho_i).
\end{equation*}
We define 
\begin{equation*}
q_j=\sum_{\sigma\in Gal(K_i/\mathbb{Q})}\sigma(p_i).
\end{equation*}
\begin{proposition}
Let $G$ be a finite group and let $\tau_j:G\to GL(W_j)$, $j=1,\,...,\,t$ be its irreducible rational representations.  Then $q_j\in\mathbb{Q}[G]$ for all $j=1,\,...,\,t$ and 
\begin{equation}\label{eqq}
\tilde\tau_i(q_j)=\begin{cases} Id_{W_i}&\mbox{if }i=j,\\
0&\mbox{if } i\ne j.
\end{cases}
\end{equation}
\end{proposition}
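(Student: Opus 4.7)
The plan is to prove the two assertions separately, with rationality of $q_j$ following from $Gal_N$-invariance, and the projector property following from the complex idempotent property \eqref{eqc} applied to each Galois conjugate.

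\textbf{Step 1: Rationality.} I would first observe that the $Gal_N$-action on $\mathbb{Q}(\xi_N)[G]$ described in the excerpt is compatible with the action on idempotents, in the sense that $\sigma(p_i)=p_{i'}$ whenever $\sigma(\rho_i)=\rho_{i'}$. This is immediate from the formula $p_i=\frac{\dim(V_i)}{\#G}\sum_g\chi_i(g)g$ together with the linear independence of the group elements in $\mathbb{Q}(\xi_N)[G]$. From the same linear independence, the stabilizer of $p_i$ in $Gal_N$ equals $Gal(\mathbb{Q}(\xi_N)/K_i)$, so the full $Gal_N$-orbit of $p_{i}$ is indexed bijectively by $Gal(K_i/\mathbb{Q})$ and coincides with $\{\sigma(p_i)\}_{\sigma\in Gal(K_i/\mathbb{Q})}$. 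Hence $q_j$ is a $Gal_N$-orbit sum, therefore fixed by $Gal_N$. Since $\mathbb{Q}[G]=\mathbb{Q}(\xi_N)[G]^{Gal_N}$, this gives $q_j\in\mathbb{Q}[G]$.

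\textbf{Step 2: The projector property.} I would check \eqref{eqq} after extending scalars to $\mathbb{C}$, which is enough because $q_j$ is rational and the $W_i$ are rational. Using Proposition \ref{RationalRepresentation}, write
\begin{equation*}
\tau_i\otimes_\mathbb{Q}\mathbb{C}=\bigoplus_{\sigma'\in Gal(K_{i_i}/\mathbb{Q})}s_i\cdot\sigma'(\rho_{i_i})
\end{equation*}
where $\rho_{i_i}$ is a chosen irreducible complex representative of the $Gal_N$-orbit attached to $\tau_i$. Then $\tilde\tau_i(q_j)$ acts on each summand $\sigma'(\rho_{i_i})$ by $\sum_{\sigma\in Gal(K_{i_j}/\mathbb{Q})}\tilde{\sigma'(\rho_{i_i})}(\sigma(p_{i_j}))$, and by Step 1 together with \eqref{eqc}, the term indexed by $\sigma$ contributes $Id$ when $\sigma(\rho_{i_j})=\sigma'(\rho_{i_i})$ and $0$ otherwise. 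When $i\ne j$ the $Gal_N$-orbits of $\rho_{i_i}$ and $\rho_{i_j}$ are disjoint (distinct rational representations correspond to distinct orbits), so no term contributes and $\tilde\tau_i(q_j)=0$. When $i=j$ the two orbits coincide, so for each $\sigma'$ there is exactly one matching $\sigma$, giving the identity on every summand and hence $\tilde\tau_i(q_j)=Id$.

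\textbf{Main obstacle.} The content is essentially bookkeeping with Galois orbits, so there is no deep obstacle; the trickiest point is being careful that the sum in the definition of $q_j$ is genuinely over $Gal(K_{i_j}/\mathbb{Q})$ (not over $Gal_N$), and that the \emph{identification} of this sum with the $Gal_N$-orbit sum of $p_{i_j}$ uses precisely the stabilizer computation in Step 1. Once this identification is made, both rationality and the orthogonality/identity statement reduce cleanly to the known complex formula \eqref{eqc}.
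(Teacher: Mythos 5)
Your proposal is correct and follows essentially the same route as the paper: rationality of $q_j$ via Galois invariance (the paper phrases it as the coefficient of each $g$ being the full $Gal(K_i/\mathbb{Q})$-orbit sum of $\chi_i(g)\in K_i$, hence rational, which is the same trace argument you package as $Gal_N$-invariance of the orbit sum), and the projector property by complexifying and comparing with the complex idempotent relations \eqref{eqc}. Your Step 2 merely spells out in detail what the paper leaves as "complexify and compare with \eqref{eqc}", so there is no substantive difference.
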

\begin{proof}
By definition $q_j\in\mathbb{Q}(\xi_N)[G]$.  For all $g\in G$ the coefficient of $g$ in $q_j$ is given by the equation
\begin{equation*}
c_g:=\frac{dim(V_i)}{\#G}\sum_{\sigma\in Gal(K_i/\mathbb{Q})}\sigma(\chi_i(g))
\end{equation*}
By hypothesis $\chi_i(g)\in K_i$ and then $c_g\in\mathbb{Q}$ for all $g\in G$.\\
In order to prove equation \eqref{eqq} we have to complexify it and compare with equation \eqref{eqc}.
\end{proof}
 
\begin{corollary}
Let $\tau:G\to GL(W)$ be a rational representation.  We define $A_j=Im\{\tilde{\tau}(q_j):W\to W\}$.  Then
\begin{itemize}
\item $A_j$ is a rational subrepresentation and $\tau|_{A_j}=m_\tau(\tau_j)\tau_j$;
\item $W=\oplus^t_{j=1}A_j$.
\end{itemize}
\end{corollary}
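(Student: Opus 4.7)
The plan is to deduce the corollary from the fact that $q_1,\dots,q_t$ form a complete family of pairwise orthogonal central idempotents in $\mathbb{Q}[G]$, and then push this structure through $\tilde\tau$. Concretely I would first establish (in addition to rationality and \eqref{eqq}, which are already in hand) the multiplicative relations $q_j^2=q_j$, $q_jq_k=0$ for $j\neq k$, $\sum_j q_j = 1$, and centrality $gq_j = q_jg$ for every $g\in G$. All four identities hold for the $p_i$ in $\mathbb{C}[G]$ by standard character theory, and since the Galois orbits indexed by $j$ are disjoint, summing each relation over the orbit and using that $q_j\in\mathbb{Q}[G]\subset\mathbb{C}[G]$ transports those identities to the $q_j$.

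Next I would verify that $A_j:=\operatorname{Im}\tilde\tau(q_j)$ is a rational subrepresentation: centrality of $q_j$ makes $\tilde\tau(q_j)$ commute with the whole $G$-action, so its image is $G$-stable, and idempotence shows that $\tilde\tau(q_j)$ is a $\mathbb{Q}$-linear projector onto $A_j$. To identify $\tau|_{A_j}$, decompose $W$ into isotypic components for the irreducible rational representations, $W = \bigoplus_{k=1}^t W^{(k)}$ with $W^{(k)}\cong m_\tau(\tau_k)\tau_k$. Applying \eqref{eqq} summand-wise, $\tilde\tau(q_j)$ acts as the identity on $W^{(j)}$ and as zero on $W^{(k)}$ for $k\neq j$; hence $A_j=W^{(j)}$ and $\tau|_{A_j}=m_\tau(\tau_j)\tau_j$.

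Finally, the direct sum decomposition falls out of the idempotent formalism: orthogonality $q_jq_k=0$ for $j\neq k$ forces $A_j\cap\sum_{k\neq j}A_k=0$, while $\sum_j q_j = 1$ gives $W = \sum_j A_j$, so $W = \bigoplus_{j=1}^t A_j$. The only mildly technical point is the first one, namely verifying the multiplicative relations for the $q_j$ directly in $\mathbb{Q}[G]$; this is essentially a Galois-averaging argument applied to standard identities for the $p_i$, and no genuine obstacle is anticipated.
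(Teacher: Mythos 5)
Your argument is correct, and it follows the route the paper implicitly intends: the corollary is stated without proof as an immediate consequence of the preceding proposition, namely decomposing $W$ into isotypic pieces by complete reducibility over $\mathbb{Q}$ and applying \eqref{eqq} summand-wise, which is exactly your second and third paragraphs. Your additional verification that the $q_j$ form a complete family of orthogonal central idempotents is correct (it follows by summing the standard relations for the $p_i$ over the disjoint Galois orbits) but not strictly needed, since once $A_j$ is identified with the isotypic component $W^{(j)}$ both the $G$-stability and the direct sum decomposition are automatic.
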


\begin{definition}
Let $\tau:G\to GL(W)$ be a rational representation.  We call $A_j$ the isotypical component related to the representation $\tau_j$ and we call $W=\oplus^t_{j=1}A_j$ isotypical decomposition of $\tau$.
\end{definition}

Now we need a classical result of representation theory about the group algebra $\mathbb{C}[G]$:
\begin{proposition}\label{isoC}
Let $G$ be a finite group and $\rho_i:G\to GL(V_i)$ $i=1,...,m$ its irreducible complex representations.
We set $\rho=\oplus_{i=1}^m\rho_i:G\to GL(V)$.  Then $\tilde\rho: \mathbb{C}[G]\to\oplus_{i=1}^mEnd(V_i)$ is an algebra isomorphism.
\end{proposition}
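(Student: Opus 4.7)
The plan is to verify three things: $\tilde\rho$ is an algebra homomorphism, source and target have the same complex dimension, and $\tilde\rho$ is injective. A $\mathbb{C}$-linear injection between finite-dimensional spaces of equal dimension is automatically bijective, so these three suffice.

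The first point is immediate. Each $\tilde\rho_i : \mathbb{C}[G] \to End(V_i)$ is the $\mathbb{C}$-linear extension of $\rho_i$, and hence an algebra homomorphism by the universal property of the group algebra; the direct sum of algebra homomorphisms into $\bigoplus_i End(V_i)$ remains an algebra homomorphism. For the dimension count, $\dim_\mathbb{C} \mathbb{C}[G] = |G|$ while $\dim_\mathbb{C} \bigoplus_{i=1}^m End(V_i) = \sum_{i=1}^m (\dim V_i)^2$. The classical equality $\sum_i (\dim V_i)^2 = |G|$ is obtained by decomposing the regular representation $\lambda: G \to GL(\mathbb{C}[G])$ as $\bigoplus_i (\dim V_i)\rho_i$, each irreducible occurring with multiplicity equal to its dimension (see \cite{Se77}).

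For injectivity, suppose $\tilde\rho(x) = 0$, so that $\tilde\rho_i(x) = 0$ for every $i = 1,\ldots,m$. Then $x$ acts trivially on every irreducible complex representation of $G$, hence on every finite-dimensional complex representation, and in particular on the regular representation $\lambda$. But left multiplication by $x$ applied to $1 \in \mathbb{C}[G]$ recovers $x$ itself, forcing $x = 0$.

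No step presents a genuine obstacle: the only nontrivial input is the classical identity $\sum_i (\dim V_i)^2 = |G|$, which is standard character theory and covered in \cite{Se77}. As an alternative route to surjectivity one could invoke the central idempotents $p_i$ already introduced: by \eqref{eqc} the elements $\tilde\rho(p_i)$ are the identity elements of the factors $End(V_i)$, and together with the images $\tilde\rho(gp_i) = (0,\ldots,\rho_i(g),\ldots,0)$ for $g \in G$ one can reach the whole of $\bigoplus_i End(V_i)$ via Burnside's density theorem; the dimension-count argument above avoids this detour.
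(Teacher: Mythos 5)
Your argument is correct. The paper offers no proof of this proposition at all --- it is invoked as a classical fact of representation theory (standard in \cite{Se77}) --- and what you give is exactly the standard argument: $\tilde\rho$ is an algebra homomorphism, it is injective because an element annihilated by every irreducible representation acts as zero on the regular representation (by complete reducibility) and hence equals $x\cdot 1=x=0$, and the dimension count $\sum_i(\dim_\mathbb{C} V_i)^2=|G|$ then forces surjectivity. Only a cosmetic point: where you say $x$ ``acts trivially'' on every representation you mean it acts as the zero operator, which is what the argument uses.
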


\begin{remark}
A similar result can not hold in general for rational representations, since the algebras $\mathbb{Q}[G]$ and $\oplus^t_{j=1}End(W_j)$ need not have the same dimension.
\end{remark}

In order to avoid indices we work on a single irreducible rational representation $\tau:G\to GL(W)$.  Consider $\mathbb{D}:=End_G(W)$, the algebra of $G$-equivariant maps on $W$:
\begin{equation*}
\mathbb{D}=End_G(W)=\{f\in End(W): \tau(g)f=f\tau(g)\,\forall g\in G\}.
\end{equation*}
The kernel of any element $f\in \mathbb{D}$ is a subrepresentation of $W$, hence, as $W$ is irreducible, all $f\in \mathbb{D}$ must be isomorphisms of $W$ and then $\mathbb{D}$ is a skew-field (or a division algebra).
We consider $W$ as a left vector space over $\mathbb{D}$, then choosing a basis we get:
\begin{equation*}
W\cong\mathbb{D}^k,
\end{equation*}
where $k=\dim_\mathbb{D}(W)$.\\
Suppose $\tau_\mathbb{C}=\oplus_{\sigma\in Gal(K_i)}s\cdot\sigma(\rho_i)$, where $\rho_i:G\to GL(V_i)$ is an irreducible complex representation and so
\begin{equation*}
End_G(W_\mathbb{C})=\oplus_{\sigma\in Gal(K_i)}End_G(V_i^{\oplus s}).
\end{equation*}
 Then:
\begin{equation*}
\begin{split}
&\dim_\mathbb{Q}W=\dim_\mathbb{C}W_\mathbb{C}=s\cdot\dim_\mathbb{C}(V_i)\cdot [K_i:\mathbb{Q}],\\
&\dim_\mathbb{Q}\mathbb{D}=\dim_\mathbb{C}\mathbb{D}_\mathbb{C}=[K_i:\mathbb{Q}]\cdot \dim(End_G(V_i^{\oplus s}))=[K_i:\mathbb{Q}]\cdot s^2,\\
&\dim_\mathbb{D}W=k=\frac{[K_i:\mathbb{Q}]\dim_\mathbb{C}(V_i)\cdot s}{[K_i:\mathbb{Q}]\cdot s^2}=\frac{dim_\mathbb{C}(V_i)}{s}.
\end{split}
\end{equation*}
Recall that the Schur index $s$ is always a divisor of the dimension of the representation and so $k\in\mathbb{N}$.
By definition of $\mathbb{D}$, $\tau(g)$ commutes with $\mathbb{D}$ for all $g\in G$ and so the image of $\tilde{\tau}$ lies in $End_\mathbb{D}(W)$.  Moreover we observe that
\begin{equation}\label{dimensionQ}
\begin{split}
\dim_\mathbb{Q}(End_\mathbb{D}(W))=&\dim_\mathbb{Q}\mathbb{D}\cdot\dim_\mathbb{D}End_\mathbb{D}(W)=\\
=&(\dim_\mathbb{C}(V_i))^2\cdot [K_i:\mathbb{Q}].
\end{split}
\end{equation}
\begin{proposition}
Let $G$ be a finite group and $\tau_j:G\to GL(W_j)$, $j=1,...,t$, its irreducible rational representations.
We set $\mathbb{D}_j=End_G(W_j)$ and $\tau=\oplus_{j=1}^t\tau_j:G\to GL(W)$.  Then $\tilde\tau: \mathbb{Q}[G]\to\oplus_{j=1}^tEnd_{\mathbb{D}_j}(W_j)$ is an algebra isomorphism.
\end{proposition}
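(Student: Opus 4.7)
The plan is to mimic the well-known complex case (Proposition \ref{isoC}): establish that $\tilde\tau$ is an algebra homomorphism into the stated target, check injectivity by complexification, and then match dimensions over $\mathbb{Q}$.

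First I would verify that $\tilde\tau$ is indeed well-defined with codomain $\bigoplus_{j=1}^t End_{\mathbb{D}_j}(W_j)$. Each $W_j$ is $G$-invariant, so $\tilde\tau(x)$ preserves the decomposition $W=\oplus_j W_j$ for every $x\in\mathbb{Q}[G]$, giving a map into $\bigoplus_j End_\mathbb{Q}(W_j)$. By the very definition $\mathbb{D}_j=End_G(W_j)$, the operator $\tilde\tau(g)|_{W_j}$ commutes with every element of $\mathbb{D}_j$, and linearity extends this to every $x\in\mathbb{Q}[G]$. Hence the image lies in $\bigoplus_j End_{\mathbb{D}_j}(W_j)$, and $\tilde\tau$ is an algebra homomorphism (the product in $\mathbb{Q}[G]$ is defined precisely so that this is multiplicative).

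Next I would prove injectivity by extending scalars to $\mathbb{C}$. Applying $-\otimes_\mathbb{Q}\mathbb{C}$ to $\tilde\tau$ yields $\tilde\tau_\mathbb{C}:\mathbb{C}[G]\to\bigoplus_j End_{\mathbb{D}_j\otimes\mathbb{C}}(W_j\otimes\mathbb{C})$. By Proposition \ref{RationalRepresentation}, $W_j\otimes\mathbb{C}$ contains, with positive multiplicity, every irreducible complex representation in the Galois orbit associated to $\tau_j$, and as $j$ varies these orbits partition the full set of irreducible complex representations of $G$. Therefore if $x\in\mathbb{Q}[G]\subset\mathbb{C}[G]$ lies in $\ker\tilde\tau$, then $\tilde\rho_i(x)=0$ for every irreducible complex representation $\rho_i$. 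Proposition \ref{isoC} then forces $x=0$ in $\mathbb{C}[G]$, hence in $\mathbb{Q}[G]$.

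Finally I would compare dimensions over $\mathbb{Q}$. Using the computation \eqref{dimensionQ} we have
\begin{equation*}
\dim_\mathbb{Q}\Bigl(\bigoplus_{j=1}^t End_{\mathbb{D}_j}(W_j)\Bigr)=\sum_{j=1}^t (\dim_\mathbb{C} V_{i(j)})^2\cdot [K_{i(j)}:\mathbb{Q}],
\end{equation*}
where $\rho_{i(j)}$ is any irreducible complex representation in the Galois orbit attached to $\tau_j$. Since the orbit of $\rho_{i(j)}$ has size $[K_{i(j)}:\mathbb{Q}]$ and every representation in that orbit has the same complex dimension, the right-hand side equals $\sum_{i=1}^m (\dim_\mathbb{C} V_i)^2=|G|=\dim_\mathbb{Q}\mathbb{Q}[G]$. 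Combined with injectivity, this yields that $\tilde\tau$ is an isomorphism.

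The main technical point I anticipate is the bookkeeping in the dimension count: one has to be careful that the Schur indices cancel correctly between $\dim_\mathbb{Q}\mathbb{D}_j=s^2[K_{i(j)}:\mathbb{Q}]$ and $\dim_{\mathbb{D}_j}W_j=\dim_\mathbb{C} V_{i(j)}/s$, so that the product is independent of $s$; this is exactly what \eqref{dimensionQ} records, and everything else is standard once injectivity is in hand.
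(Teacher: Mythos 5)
Your proposal is correct and follows essentially the same route as the paper: injectivity is deduced from the complex group-algebra isomorphism (Proposition \ref{isoC}) after extending scalars, and surjectivity follows from the dimension count based on equation \eqref{dimensionQ}, using that the Galois orbits partition the irreducible complex representations so that $\sum_j(\dim_\mathbb{C}V_{i(j)})^2[K_{i(j)}:\mathbb{Q}]=\sum_i(\dim_\mathbb{C}V_i)^2=\#G$. You merely spell out the well-definedness of the codomain and the complexification step in more detail than the paper's terse argument.
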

\begin{proof}
From Proposition \ref{isoC} we get the injectivity.  Then it is enough to prove that the two algebras have the same dimension.  Of course $\dim\mathbb{Q}[G]=\#G$.  Now from equation \eqref{dimensionQ} we get
\begin{equation*}
\dim_\mathbb{Q}\left(\oplus^t_{j=1}End_{\mathbb{D}_j}(W_j)\right)=\oplus_{i=1}^m(\dim_\mathbb{C}(V_i))^2=\#G.
\end{equation*}
\end{proof}

By choosing a $\mathbb{D}$-basis of $W$ we identiy $End_\mathbb{D}(W)$ with the algebra $Mat(k,\mathbb{D})$ of matrices $k\times k$  with coefficients in $\mathbb{D}$.
In particular in $Mat(k,\mathbb{D})$ we have matrices $E_i$ with $1$ at $(i,i)$ and zero elsewhere.
Then by the proposition above we are able to find $k$ idempotents $w_1,\,...,\,w_k$ in $\mathbb{Q}[G]$ such that $\tilde\tau(w_i)=E_i$.
\begin{remark}
This elements $w_1,\,...,\,w_k$ are not unique, since they depend on the choice of a $\mathbb{D}$-basis.
\end{remark}
This construction holds for all the irreducible rational representations.  Given an irreducible rational representation $\tau_j:G\to GL(W_j)$ we denote by $w_{j,1},\,...,\,w_{j,k_j}$ idempotent elements of $\mathbb{Q}[G]$ constructed as above.
\begin{proposition}\label{isogenousdecomposition}
Let $\tau:G\to GL(W)$ be a rational representation and let $A_1,\,...,\,A_t$ be the isotypical components related to the irreducible rational representations of $G$.  For all $j \in 1,..., t$ we define $B_j=Im\{\tilde{\tau}(w_{j,1}):W\to W\}$.  Then $A_j\cong B_j^{\oplus k_j}$ for all $j$, $j=1,\,...,\,t$, $k_j=dim_{\mathbb{D}_j}W_j$.
\end{proposition}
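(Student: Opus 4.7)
The plan is to exploit the algebra isomorphism $\tilde\tau: \mathbb{Q}[G] \xrightarrow{\sim} \bigoplus_{l=1}^t End_{\mathbb{D}_l}(W_l)$ from the previous proposition to transport the matrix-unit decomposition of $End_{\mathbb{D}_j}(W_j)\cong Mat(k_j,\mathbb{D}_j)$ back to $\mathbb{Q}[G]$, and then to use these lifted matrix units to split $A_j$ as a direct sum of $k_j$ copies of $B_j$.

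First I would use the algebra isomorphism to pin down the idempotents concretely: take $w_{j,i}$ to be the unique preimage under $\tilde\tau$ of the tuple with $E_i$ in the $j$-th slot and $0$ in all other slots. This is consistent with the text's condition $\tilde\tau_j(w_{j,i})=E_i$ and has the extra feature that $\tilde\tau_l(w_{j,i})=0$ for every $l\neq j$, so $\tilde\tau(w_{j,i})$ vanishes on $A_l$ for $l\neq j$. In particular $B_j = Im(\tilde\tau(w_{j,1}))\subseteq A_j$.

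Next I would note that the matrix relations $E_iE_l=\delta_{il}E_i$ and $\sum_{i=1}^{k_j} E_i = Id_{W_j}$ pull back through $\tilde\tau$ to $w_{j,i}w_{j,l}=\delta_{il}w_{j,i}$ and $\sum_{i=1}^{k_j} w_{j,i}=q_j$. Since $\tilde\tau(q_j)$ acts as the identity on $A_j$ by equation \eqref{eqq}, the endomorphisms $\tilde\tau(w_{j,i})$ form a complete system of orthogonal idempotents on $A_j$, yielding the direct sum
\begin{equation*}
A_j = \bigoplus_{i=1}^{k_j} B_j^{(i)}, \qquad B_j^{(i)} := Im\bigl(\tilde\tau(w_{j,i})\bigr),
\end{equation*}
with $B_j^{(1)}=B_j$.

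Finally I would apply $\tilde\tau^{-1}$ to the off-diagonal matrix units: let $e_{i,1}, e_{1,i}\in\mathbb{Q}[G]$ be the preimages of the tuples with $E_{i,1}$ (resp.\ $E_{1,i}$) in the $j$-th slot and $0$ elsewhere. The identities $E_{i,1}E_{1,i}=E_i$ and $E_{1,i}E_{i,1}=E_1$ translate, via $\tilde\tau$, into the statement that $\tilde\tau(e_{i,1})$ and $\tilde\tau(e_{1,i})$ restrict to mutually inverse linear maps between $B_j=B_j^{(1)}$ and $B_j^{(i)}$; combining these isomorphisms with the direct sum above produces $A_j\cong B_j^{\oplus k_j}$. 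The main obstacle is purely notational: one must be careful that the chosen $\mathbb{D}_j$-basis of $W_j$ simultaneously fixes the identification $End_{\mathbb{D}_j}(W_j)\cong Mat(k_j,\mathbb{D}_j)$ and the corresponding lifts $w_{j,i}, e_{i,1}, e_{1,i}\in\mathbb{Q}[G]$; once this is done, everything reduces to linear algebra transported through $\tilde\tau^{-1}$, and no ingredient beyond the preceding proposition is required.
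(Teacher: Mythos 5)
Your proposal is correct and follows essentially the same route as the paper: decompose $A_j$ via the orthogonal idempotents $w_{j,1},\dots,w_{j,k_j}$ summing to $q_j$, then use the off-diagonal matrix units of $Mat(k_j,\mathbb{D}_j)$, transported through $\tilde\tau^{-1}$, to identify the summands with $B_j$. Your version is merely more explicit about choosing the $w_{j,i}$ as preimages vanishing in the other slots (so that $B_j\subseteq A_j$) and about the mutually inverse maps $E_{i,1},E_{1,i}$, details the paper leaves implicit.
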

\begin{proof}
By construction $w_{j,1}+\,...+\,w_{j,k_j}=q_j$ for all $j=1,\,...,\,t$.  Since $\tilde{\tau}(q_j)$ acts as the identity on $A_j$ we get a decomposition:
\begin{equation*}
A_j=Im\{\tilde{t}(w_{j,1})\}\oplus\,...\oplus\,Im\{\tilde{t}(w_{j,k_j})\}.
\end{equation*} 
Fix a $\mathbb{D}_j$-basis of $W_j$ and consider in $End_{\mathbb{D}_j}(W_j)\cong Mat(k_j,\mathbb{D}_j)$ the matrices $M_i$ with $1$ at $(i,1)$ and zero elsewhere.  These matrices provide isomorphisms between $B_j=Im\{\tilde{t}(w_{j,1})\}$ and $Im\{\tilde{t}(w_{j,i})\}$ for all $i=2,\,...,\,k_j$.
\end{proof}

\begin{definition}
Let $\tau:G\to GL(W)$ be a rational representation.  We call $B_j$ the isogenous component related to the representation $\tau_j$ and we call $W\cong\oplus^t_{j=1}B_j^{\oplus k_j}$ the group algebra decomposition of $\tau$.
\end{definition}

\begin{remark}
Unlike the isotypical components $A_j$, the isogenous components $B_j$ are not $G$-subrepresentations.  Indeed, as observed in the proof of the Proposition \ref{isogenousdecomposition}, the group algebra $\mathbb{Q}[G]$ interchanges the isogenous components.
\end{remark}

Now that we have defined the group algebra decomposition we relate it with the Hodge structures.  First of all we recall the following:

\begin{lemma}[\cite{Vo02}, Section 7.3.1]\label{imm}
Let $W$ be a rational Hodge strucuture and let $\phi\in End_{Hod}(W)$.  Then $Im(\phi)$ is a rational Hodge substrucure.
\end{lemma}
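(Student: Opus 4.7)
The plan is to verify the two conditions defining a rational sub-Hodge structure of $W$: namely that $Im(\phi)$ is a $\mathbb{Q}$-vector subspace and that the Hodge decomposition of $W_\mathbb{C}$ restricts to a Hodge decomposition of $Im(\phi)_\mathbb{C}$ compatible with complex conjugation. Everything is essentially formal once one uses the hypothesis $\phi \in End_{Hod}(W)$, which by definition means $\phi_\mathbb{C}(W^{p,q}) \subseteq W^{p,q}$ for every bidegree $(p,q)$.

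First I would observe that the $\mathbb{Q}$-structure is immediate: since $\phi$ is a morphism of rational Hodge structures it is defined over $\mathbb{Q}$, so $Im(\phi)$ is a $\mathbb{Q}$-subspace of $W$, and extension of scalars gives $Im(\phi)_\mathbb{C} = \phi_\mathbb{C}(W_\mathbb{C})$.

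Next I would define the candidate Hodge pieces as $V^{p,q} := \phi_\mathbb{C}(W^{p,q})$. The equality $Im(\phi)_\mathbb{C} = \sum_{p,q} V^{p,q}$ follows by applying $\phi_\mathbb{C}$ to the decomposition $W_\mathbb{C} = \bigoplus_{p,q} W^{p,q}$, and the sum is direct because each $V^{p,q}$ sits inside the corresponding $W^{p,q}$ and the $W^{p,q}$ are in direct sum. The slightly less automatic point, which I would treat carefully, is the identity $V^{p,q} = Im(\phi)_\mathbb{C} \cap W^{p,q}$: one inclusion is trivial, and for the reverse, given $x \in Im(\phi)_\mathbb{C} \cap W^{p,q}$, I would write $x = \phi_\mathbb{C}(y)$ with $y = \sum_{p',q'} y^{p',q'}$ and observe that $\phi_\mathbb{C}(y^{p',q'}) \in W^{p',q'}$, so uniqueness of the Hodge decomposition forces $x = \phi_\mathbb{C}(y^{p,q}) \in V^{p,q}$. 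This confirms that the Hodge filtration of $W$ restricts to $Im(\phi)$.

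Finally I would check the reality condition. Because $\phi$ is defined over $\mathbb{Q}$, the complexified map $\phi_\mathbb{C}$ commutes with complex conjugation on $W_\mathbb{C}$, hence
\begin{equation*}
\overline{V^{p,q}} \;=\; \overline{\phi_\mathbb{C}(W^{p,q})} \;=\; \phi_\mathbb{C}\bigl(\overline{W^{p,q}}\bigr) \;=\; \phi_\mathbb{C}(W^{q,p}) \;=\; V^{q,p},
\end{equation*}
which is exactly the conjugation symmetry required of a Hodge structure. There is no genuine obstacle in this argument: the only step that requires a short verification is the intersection identity $V^{p,q} = Im(\phi)_\mathbb{C} \cap W^{p,q}$, and it is immediate from the uniqueness of Hodge components.
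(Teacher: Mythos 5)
Your proof is correct. Note that the paper does not prove this lemma at all --- it simply quotes it from Voisin (Section 7.3.1) --- so there is no internal argument to compare against; your write-up supplies the standard proof that Voisin's reference contains: since $\phi_\mathbb{C}$ preserves each $W^{p,q}$ and commutes with conjugation (being defined over $\mathbb{Q}$), one gets $Im(\phi)_\mathbb{C}=\bigoplus_{p,q}\bigl(Im(\phi)_\mathbb{C}\cap W^{p,q}\bigr)$ with the required symmetry, the only point needing care being the intersection identity $V^{p,q}=Im(\phi)_\mathbb{C}\cap W^{p,q}$, which you handle correctly via uniqueness of Hodge components (this is exactly the strictness of morphisms of Hodge structures). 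No gaps.
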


\begin{proposition}
Let $(W,h)$ be a rational Hodge structure, $G$ a finite group and let $\tau:G\to GL(W)$ be a rational representation such that $\tau(G)\subset End_{Hod}(W)$.  Then the isotypical and isogenous component of $\tau$ are Hodge substructures.
\end{proposition}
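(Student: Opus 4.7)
The plan is to reduce the statement to Lemma \ref{imm} by showing that the idempotents used to cut out the isotypical and isogenous components act as endomorphisms of Hodge structures.

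First I would observe that $End_{Hod}(W)$ is a $\mathbb{Q}$-subalgebra of $End(W)$: it is clearly closed under $\mathbb{Q}$-linear combinations and under composition, since the conditions defining a morphism of Hodge structures (compatibility with the Hodge decomposition of $W_\mathbb{C}$, or equivalently commutation with the action $h$ of $S^1$) are preserved by sums and compositions. Next, since $\tilde{\tau}: \mathbb{Q}[G] \to End(W)$ is the $\mathbb{Q}$-linear extension of $\tau$, and the hypothesis gives $\tau(g) \in End_{Hod}(W)$ for every $g \in G$, it follows immediately that $\tilde{\tau}(x) \in End_{Hod}(W)$ for every $x \in \mathbb{Q}[G]$.

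Applied to our specific idempotents, this yields that $\tilde{\tau}(q_j)$ and $\tilde{\tau}(w_{j,1})$ are morphisms of rational Hodge structures from $W$ to itself. By Lemma \ref{imm}, the image of any such morphism is a rational Hodge substructure of $W$. Therefore
\begin{equation*}
A_j = \mathrm{Im}\{\tilde{\tau}(q_j)\} \quad \text{and} \quad B_j = \mathrm{Im}\{\tilde{\tau}(w_{j,1})\}
\end{equation*}
are Hodge substructures, which is exactly what we wanted.

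There is essentially no obstacle here: the whole content was already packaged into Lemma \ref{imm}, together with the fact that the group algebra decomposition is implemented by idempotents lying in $\mathbb{Q}[G]$ (rather than only in $\mathbb{C}[G]$), so that the relevant projectors automatically sit inside $End_{Hod}(W)$. The only point worth underlining is that for the isogenous components one does not get $G$-stability (as remarked after the definition), but Hodge-stability still holds because the statement is about the image of a single Hodge endomorphism and not about invariance under all of $\tau(G)$.
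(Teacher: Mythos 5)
Your argument is correct and is essentially the paper's own proof: both observe that $\tau(G)\subseteq End_{Hod}(W)$ forces $\tilde{\tau}(\mathbb{Q}[G])\subseteq End_{Hod}(W)$ (since $End_{Hod}(W)$ is a $\mathbb{Q}$-subalgebra), and then apply Lemma \ref{imm} to the idempotents $q_j$ and $w_{j,1}$ whose images are the isotypical and isogenous components. Your version just spells out the subalgebra step a little more explicitly.
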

\begin{proof}
We have defined the isotypical and isogenous components of a given representation $\tau:G\to GL(W)$ as the images of opportune elements in $\mathbb{Q}[G]$.  Notice that $\tau(G)\subseteq End_{Hod}(W)$ implies $\tilde{\tau}(\mathbb{Q}[G])\subseteq End_{Hod}(W)$.  Now we apply Lemma \ref{imm}
\end{proof}

We conclude this section with the following lemma:
\begin{lemma}\label{pari}
Let $G$ be a finite group and $\rho_i:G\to GL(V_i)$ its irreducible complex representations.  Let $(W,h)$ be a rational Hodge structure of weight $1$ and $\tau:G\to GL(W)$ a rational representation such that $\tau(G)\subset End_{Hod}(W)$.  Consider the induced complex representations $\tau_\mathbb{C}:G\to GL(W_\mathbb{C})$ and $\rho=\tau |_{W^{1,0}}:G\to GL(W^{1,0})$.  Then:
\begin{itemize}
\item $n_{\tau_\mathbb{C}}(\rho_i)=n_{\rho}(\rho_i)+n_\rho(\overline{\rho_i})$;
\item if $\rho_i$ is self-dual $n_{\tau_\mathbb{C}}(\rho_i)$ is even.
\end{itemize}
\end{lemma}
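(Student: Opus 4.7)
The core idea is to exploit the Hodge decomposition together with the fact that $\tau$ is defined over $\mathbb{Q}$. Since $(W,h)$ has weight $1$, we have a decomposition $W_\mathbb{C}=W^{1,0}\oplus W^{0,1}$, and since $\tau(G)\subset \End_{Hod}(W)$, both summands are $G$-stable. Thus $\tau_\mathbb{C}$ splits as a direct sum of two complex subrepresentations, one of which is exactly $\rho=\tau|_{W^{1,0}}$.

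The key observation is that the other summand $\tau|_{W^{0,1}}$ is the complex conjugate representation $\overline{\rho}$. Indeed, because $\tau$ is a rational (hence real) representation, its matrices commute with complex conjugation on $W_\mathbb{C}$; and because $W$ carries a rational structure, complex conjugation on $W_\mathbb{C}$ interchanges the two Hodge pieces: $\overline{W^{1,0}}=W^{0,1}$. Combining these two facts yields a $G$-equivariant $\mathbb{C}$-antilinear isomorphism $W^{1,0}\to W^{0,1}$, so the representation on $W^{0,1}$ is $\overline{\rho}$ in the sense of Section 1.3.

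Putting this together, $\tau_\mathbb{C}\cong \rho\oplus\overline{\rho}$ as complex representations of $G$. Writing $\rho=\bigoplus_i n_\rho(\rho_i)\rho_i$ and using that complex conjugation permutes the irreducible representations with $\overline{\overline{\rho_i}}=\rho_i$, we get
\begin{equation*}
\overline{\rho}=\bigoplus_i n_\rho(\rho_i)\overline{\rho_i}=\bigoplus_i n_\rho(\overline{\rho_i})\rho_i,
\end{equation*}
so $n_{\tau_\mathbb{C}}(\rho_i)=n_\rho(\rho_i)+n_\rho(\overline{\rho_i})$, which is the first assertion. The second assertion is then immediate: if $\rho_i$ is self-dual, $\overline{\rho_i}=\rho_i$, and the formula reads $n_{\tau_\mathbb{C}}(\rho_i)=2n_\rho(\rho_i)$, which is even.

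The only slightly delicate point is verifying that complex conjugation really does furnish a $G$-equivariant identification between the representation on $W^{0,1}$ and the dual (in the Galois/character sense of the paper) of $\rho$ on $W^{1,0}$; but this is essentially the tautology that for a real representation, conjugating a $G$-invariant complex subspace produces another $G$-invariant subspace on which $G$ acts by the conjugate matrices. No serious obstacle is expected.
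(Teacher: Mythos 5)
Your proof is correct and takes essentially the same approach as the paper: both identify $\tau_\mathbb{C}|_{W^{0,1}}$ with $\overline{\rho}$ so that $\tau_\mathbb{C}=\rho\oplus\overline{\rho}$, and then read off the multiplicities, with the self-dual case giving $n_{\tau_\mathbb{C}}(\rho_i)=2n_\rho(\rho_i)$. In fact you make explicit the step the paper leaves implicit, namely that rationality of $\tau$ together with $\overline{W^{1,0}}=W^{0,1}$ yields the $G$-equivariant antilinear identification forcing the action on $W^{0,1}$ to be the conjugate representation.
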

\begin{proof}
The subspaces $W^{1,0}$ and $W^{0,1}$ are subrepresentations of $W_\mathbb{C}$.  It follows that if $\tau_\mathbb{C}|_{W^{1,0}}=\rho$ then $\tau_\mathbb{C}|_{W^{0,1}}=\overline{\rho}$, i.e.\ $\tau_\mathbb{C}=\rho\oplus\overline{\rho}.$
Hence the following equalities hold:
\begin{equation*}
\begin{split}
&n_{\tau_\mathbb{C}}(\rho_i)=n_\rho(\rho_i)+n_{\overline{\rho}}(\rho_i),\\
&n_{\overline{\rho}}(\rho_i)=n_\rho(\overline{\rho_i}).
\end{split}
\end{equation*}
In particular if $\rho_i$ is self-dual we get $n_{\tau_\mathbb{C}}(\rho_i)=2n_\rho(\rho_i)$.
\end{proof}

\subsection{Broughton formula}

Let $C$ be a smooth curve of genus $g(C)$ and let $G$ be a finite group of automorphisms of $C$.  We will denote by $\varphi$ the natural action induced by $G$ on the first cohomology group $H^1(C,\mathbb{C})$.
Let assume $C/G\cong\mathbb{P}^1$ and let $T=[g_1,\,...,\,g_r]$ be the spherical system of generators associated to the ramified covering $f:C\to C/G\cong\mathbb{P}^1$.
\begin{proposition}[\cite{Br87}]\label{Broughton}
Let $\varphi=\oplus_{i=1}^mn_{\varphi}(\rho_i)\rho_i$ be the decomposition of $\varphi$ into irreducible complex representations.  Then, with the notation as above we have:
\begin{itemize}
\item $n_\varphi(\rho_1)=\langle\varphi,\rho_1\rangle =0$,
\item $n_\varphi(\rho_i)=\langle\varphi,\rho_i\rangle =\chi_i(1)(r-2)-\sum_{j=1}^rl_{g_j}(\rho_i)$,
\end{itemize}
where $\chi_i$ are the characters of the irreducible complex representations $\rho_i:G\to GL(V_i)$ of $G$, 
$\rho_1$ is the trivial representation, $r=\ell(T)$ is the length of $T$ and $l_{g_j}(\rho_i)$ is the multiplicity of the trivial character in the restriction of $\rho_i$ to $\langle g_j\rangle$.
\end{proposition}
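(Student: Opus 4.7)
The plan is to combine the Lefschetz fixed-point theorem with Frobenius reciprocity applied to the permutation representations coming from the branch orbits, and then to collapse everything using Riemann--Hurwitz.

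First I would compute the character $\chi_\varphi$ of the representation $\varphi$ on $H^1(C,\mathbb{C})$ directly. Since $g$ acts trivially on $H^0(C,\mathbb{C})$ and on $H^2(C,\mathbb{C})$, the Lefschetz fixed-point theorem gives $\chi_\varphi(g) = 2 - \#\mathrm{Fix}(g)$ for every $g \neq \mathrm{Id}_G$, while $\chi_\varphi(\mathrm{Id}_G) = 2g(C)$. The first assertion $n_\varphi(\rho_1)=0$ is then immediate from Hodge theory: invariants satisfy $H^1(C,\mathbb{C})^G \cong H^1(C/G,\mathbb{C}) = H^1(\mathbb{P}^1,\mathbb{C}) = 0$.

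For the general formula, the key observation is that for each branch point $b_j$ the fibre $\mathcal{O}_j = f^{-1}(b_j)$ is a $G$-orbit whose points have stabilizers conjugate to $\langle g_j\rangle$, so $\mathcal{O}_j$ is isomorphic as a $G$-set to $G/\langle g_j\rangle$. The number of points of $\mathcal{O}_j$ fixed by $g$ is therefore the value at $g$ of the permutation character $\mathrm{Ind}_{\langle g_j\rangle}^G \mathbf{1}$, and summing over $j$ recovers $\#\mathrm{Fix}(g)$ for $g \neq \mathrm{Id}_G$. By Frobenius reciprocity,
\begin{equation*}
\bigl\langle \mathrm{Ind}_{\langle g_j\rangle}^G \mathbf{1},\,\rho_i\bigr\rangle_G \;=\; \bigl\langle \mathbf{1},\,\rho_i|_{\langle g_j\rangle}\bigr\rangle_{\langle g_j\rangle} \;=\; l_{g_j}(\rho_i).
\end{equation*}

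Now I would expand $\langle \chi_\varphi,\chi_i\rangle$ for $i\neq 1$, separating the contribution at $g=\mathrm{Id}_G$ (where $\chi_\varphi$ does not equal $2-\#\mathrm{Fix}$) from the rest. Using the orthogonality relation $\sum_{g \neq \mathrm{Id}_G}\overline{\chi_i(g)} = -\chi_i(1)$ and the Frobenius computation applied to each orbit $\mathcal{O}_j$ (adding and subtracting the $g=\mathrm{Id}_G$ contribution $|G|/m_j$ for each $j$), one obtains
\begin{equation*}
n_\varphi(\rho_i) \;=\; \frac{\chi_i(1)}{|G|}\Bigl[(2g(C)-2)+\sum_{j=1}^r |G|/m_j\Bigr] - \sum_{j=1}^r l_{g_j}(\rho_i).
\end{equation*}
Finally, the Riemann--Hurwitz formula for $f\colon C\to\mathbb{P}^1$ simplifies the bracketed expression to $|G|(r-2)$, giving the claimed identity.

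The main technical point to keep straight is the bookkeeping at $g=\mathrm{Id}_G$: the Lefschetz formula has the "wrong" value there, while the permutation characters $\mathrm{Ind}_{\langle g_j\rangle}^G\mathbf{1}$ naturally include this term, so the two discrepancies must be matched and cancelled precisely by Riemann--Hurwitz. Everything else is formal manipulation of characters once the Lefschetz--Frobenius link is established.
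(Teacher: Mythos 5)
Your argument is correct. Note that the paper itself gives no proof of this proposition: it is quoted from Broughton \cite{Br87}, and the remark following it merely observes that the same computation can be carried out with the Lefschetz fixed point formula. Your proposal supplies exactly that argument, which is the classical one, and every step checks out. The holomorphic Lefschetz count does give $\chi_\varphi(g)=2-\#\mathrm{Fix}(g)$ for $g\neq \mathrm{Id}_G$ (each fixed point of a finite-order automorphism has local index $1$), the vanishing $n_\varphi(\rho_1)=0$ follows from $H^1(C,\mathbb{C})^G\cong H^1(\mathbb{P}^1,\mathbb{C})=0$, and since $G$ acts freely off the ramification locus, all fixed points of $g\neq \mathrm{Id}_G$ lie in the fibres over the branch points, each fibre being the $G$-set $G/\langle g_j\rangle$ with permutation character $\mathrm{Ind}_{\langle g_j\rangle}^G\mathbf{1}$; Frobenius reciprocity then converts these contributions into the multiplicities $l_{g_j}(\rho_i)$. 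Your bookkeeping at $g=\mathrm{Id}_G$ is the only delicate point and it is handled correctly: for $i\neq 1$ one uses $\sum_{g\neq \mathrm{Id}_G}\overline{\chi_i(g)}=-\chi_i(1)$ (which is where the hypothesis $\rho_i\neq\rho_1$ enters, consistently with the separate first bullet), adds and subtracts the value $\pi_j(\mathrm{Id}_G)=|G|/m_j$ of each permutation character at the identity, and the Riemann--Hurwitz identity $(2g(C)-2)+\sum_j |G|/m_j=|G|(r-2)$ collapses the bracket to the stated formula $n_\varphi(\rho_i)=\chi_i(1)(r-2)-\sum_{j=1}^r l_{g_j}(\rho_i)$. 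So where the paper relies on a citation, your proposal is a complete and correct derivation along the route the paper's remark alludes to.
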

\begin{remark}
The same computations can be done using the Lefschetz fixed point formula (see \cite[Chapter 3.4]{GH94}).  However, since we are interested only in the first cohomology groups of curves, Broughton's formula makes calculations faster and easier.
\end{remark}
The group $G$ induces an action not only on the complex (or real) cohomology, but also in the rational one.  These actions are connected since $H^1(C,\mathbb{C})=H^1(C,\mathbb{Q})\otimes\mathbb{C}$.  We will denote both actions with $\varphi$.\\
Applying together Proposition \ref{RationalRepresentation} and Proposition \ref{Broughton} we can compute the decomposition of $\varphi$ into irreducible rational representations.

Notice that here we are exactly in the situation described in Proposition \ref{pari}: the finite group $G$ acts on $H^1(C,\mathbb{Q})$ that is a rational Hodge structure of weight $1$.  Moreover, since $G$ acts holomorphically on $C$, the action on the cohomology preserves the Hodge decomposition and then 
\begin{equation*}
\varphi(G)\subset End_{Hod}(H^1(C,\mathbb{Q})).
\end{equation*}
\begin{example}\label{z32}
Let $G$ be the abelian group $(\mathbb{Z}_3)^2:=(\mathbb{Z}/3\mathbb{Z})^2$.  Consider the unmixed ramification structure $(T_1,T_2)$ for $G$:
\begin{equation*}
\begin{split}
T_1=&[(1,1),(2,1),(1,1),(1,2),(1,1)],\\
T_2=&[(0,2),(0,1),(1,0),(2,0)],
\end{split}
\end{equation*}
of type $([3^5],\,[3^4])$.  We denote by $f$ and $h$ the  corresponding ramified coverings of $\mathbb{P}^1$:
\begin{equation*}
\begin{split}
f:C&\to C/G\cong\mathbb{P}^1,\\
h:D&\to D/G\cong\mathbb{P}^1,
\end{split}
\end{equation*}
where $C$ and $D$ have genus $7$ and $4$ respectively.\\
The character table of $G$ is 
\begin{equation*}
\begin{array}{c|ccccccccc}
&Id&(1,0)&(2,0)&(0,1)&(0,2)&(1,1)&(2,2)&(2,1)&(1,2)\\
\hline
\chi_1&1&1&1&1&1&1&1&1&1\\
\chi_2&1&\xi_3&\xi_3^2&1&1&\xi_3&\xi_3^2&\xi_3^2&\xi_3\\
\chi_3&1&\xi_3^2&\xi_3&1&1&\xi_3^2&\xi_3&\xi_3&\xi_3^2\\
\chi_4&1&1&1&\xi_3&\xi_3^2&\xi_3&\xi_3^2&\xi_3&\xi_3^2\\
\chi_5&1&\xi_3&\xi_3^2&\xi_3&\xi_3^2&\xi_3^2&\xi_3&1&1\\
\chi_6&1&\xi_3^2&\xi_3&\xi_3&\xi_3^2&1&1&\xi_3^2&\xi_3\\
\chi_7&1&1&1&\xi_3^2&\xi_3&\xi_3^2&\xi_3&\xi_3^2&\xi_3\\
\chi_8&1&\xi_3&\xi_3^2&\xi_3^2&\xi_3&1&1&\xi_3&\xi_3^2\\
\chi_9&1&\xi_3^2&\xi_3&\xi_3^2&\xi_3&\xi_3&\xi_3^2&1&1\\
\end{array}
\end{equation*}
By Proposition \ref{RationalRepresentation} $G$ has $5$ irreducible $\mathbb{Q}$-representations $\tau_1,\,...,\,\tau_5$ with:
\begin{equation*}
\begin{split}
\tau_1\otimes_\mathbb{Q}\mathbb{C}=&\rho_1,\\
\tau_2\otimes_\mathbb{Q}\mathbb{C}=&\rho_2\oplus\rho_3,\\
\tau_3\otimes_\mathbb{Q}\mathbb{C}=&\rho_4\oplus\rho_7,\\
\tau_4\otimes_\mathbb{Q}\mathbb{C}=&\rho_5\oplus\rho_9,\\
\tau_5\otimes_\mathbb{Q}\mathbb{C}=&\rho_6\oplus\rho_8.\\
\end{split}
\end{equation*}
We apply the Broughton formula (Proposition \ref{Broughton}) to compute the decomposition of the representaion of the group $G$ on $H^1(C,\mathbb{C})$ and $H^1(D,\mathbb{C})$.  We get 
\begin{equation*}
\begin{array}{c|ccccccccc}
&\rho_1&\rho_2&\rho_3&\rho_4&\rho_5&\rho_6&\rho_7&\rho_8&\rho_9\\
\hline
\varphi_C&0&3&3&3&1&0&3&0&1\\
\varphi_D&0&0&0&0&2&2&0&2&2
\end{array}
\end{equation*}
for the complex cohomology groups $H^1(C,\mathbb{C})$, $H^1(D,\mathbb{C})$ and 
\begin{equation*}
\begin{array}{c|ccccc}
&\tau_1&\tau_2&\tau_3&\tau_4&\tau_5\\
\hline
\varphi_C&0&3&3&1&0\\
\varphi_D&0&0&0&2&2
\end{array}
\end{equation*}
for the rational cohomology groups $H^1(C,\mathbb{Q})$, $H^1(D,\mathbb{Q})$.
\end{example}

\section{On the Cohomology}\label{cohom}

The cohomology of surfaces isogenous to a product has been studied in \cite{CLZ13} and in \cite{CL13}.  In these papers the authors focused on the complex cohomology and they study the corresponding Albanese variety.  We follow here a completely different approach.

Let $S=\frac{C\times D}{G}$ be a surface isogenous to a higher product of unmixed type.  Then the second cohomology of $S$ depends on the cohomology of $C$ and $D$ and on the action of $G$.  First of all we need a topological lemma:

\begin{lemma}[\cite{Ha02}, Proposition 3G.1] \label{CoveringCohomology}
Let $\pi:\tilde{X}\to X$ be a (topological) covering space of degree $N$ defined by an action of a group $G$ on $\tilde{X}$.  Then with coefficients in a field $F$ whose characteristic is $0$ or a prime not dividing $n$, the map $\pi^*:H^k(X,F)\to H^k(\tilde{X},F)$ is injective with image the subgroup $H^k(\tilde{X},F)^G$.
\end{lemma}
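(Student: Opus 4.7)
The plan is to build a transfer (averaging) map that splits $\pi^{*}$ up to the factor $N$. Since the deck group $G$ acts freely on $\tilde X$ with quotient $X$, and each standard simplex $\Delta^{k}$ is simply connected, any singular simplex $\sigma : \Delta^{k} \to X$ admits exactly $N$ lifts $\tilde\sigma_{1},\dots,\tilde\sigma_{N}$ to $\tilde X$, which are permuted freely and transitively by $G$. Defining $\tau_{*} : C_{k}(X) \to C_{k}(\tilde X)$ by $\sigma \mapsto \sum_{i} \tilde\sigma_{i}$ and extending linearly gives a chain map, whose dual induces a transfer $\tau^{*} : H^{k}(\tilde X, F) \to H^{k}(X, F)$.

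Next I would check the two compositional identities, both of which follow from a direct count at the cochain level:
\[
\tau^{*} \circ \pi^{*} = N \cdot \mathrm{id}_{H^{k}(X,F)},\qquad \pi^{*} \circ \tau^{*} = \sum_{g \in G} g^{*}.
\]
The first holds because $\pi \circ \tilde\sigma_{i} = \sigma$ for each of the $N$ lifts; the second because the $N$ lifts of $\pi \circ \tilde\sigma$ are precisely the translates $g\cdot\tilde\sigma$ as $g$ ranges over $G$.

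Now I would invoke the hypothesis that $N$ is invertible in $F$. Injectivity of $\pi^{*}$ is immediate from the first identity: if $\pi^{*}\alpha = 0$ then $N\alpha = 0$, whence $\alpha = 0$. The inclusion $\mathrm{im}(\pi^{*}) \subseteq H^{k}(\tilde X,F)^{G}$ is automatic, because $\pi \circ g = \pi$ forces $g^{*}\pi^{*} = \pi^{*}$ for all $g\in G$. Conversely, for $\beta \in H^{k}(\tilde X, F)^{G}$ the second identity yields
\[
\pi^{*}\!\left(\tfrac{1}{N}\tau^{*}\beta\right) = \tfrac{1}{N}\sum_{g\in G} g^{*}\beta = \beta,
\]
so $\beta$ lies in the image of $\pi^{*}$.

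The only point requiring real care is the construction of the transfer: one must verify that $\sigma$ admits exactly $N$ well-defined lifts, and that the assignment $\sigma \mapsto \sum_{i}\tilde\sigma_{i}$ commutes with the boundary operator (so as to be a chain map). After that, everything reduces to routine cochain manipulations, and the invertibility of $N$ in $F$ does the rest; outside this hypothesis the identity $\tau^{*}\pi^{*} = N\cdot\mathrm{id}$ still holds, but it no longer guarantees either injectivity or surjectivity onto the $G$-invariants, which is precisely why the characteristic condition on $F$ is indispensable.
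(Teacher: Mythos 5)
Your transfer-map argument is correct and complete: the identities $\tau^{*}\pi^{*}=N\cdot\mathrm{id}$ and $\pi^{*}\tau^{*}=\sum_{g\in G}g^{*}$, together with the invertibility of $N$ in $F$, give exactly the injectivity and the identification of the image with $H^{k}(\tilde X,F)^{G}$. The paper gives no proof of its own but cites Hatcher's Proposition 3G.1, whose proof is precisely this transfer construction, so your argument coincides with the intended one.
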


\begin{proposition}\label{decompositionsecond}
Let $S=\frac{C\times D}{G}$ be a surface isogenous to a higher product of unmixed type.  Then the second cohomology group of $S$ is given by $H^2(S,\mathbb{Q})\cong U\oplus Z$, where
\begin{equation*}
\begin{split}
U:&=\left(H^2(C,\mathbb{Q})\otimes H^0(D,\mathbb{Q})\right)\oplus\left(H^0(C,\mathbb{Q})\otimes H^2(D,\mathbb{Q})\right),\\
Z:&=\left(H^1(C,\mathbb{Q})\otimes H^1(D,\mathbb{Q})\right)^G.
\end{split}
\end{equation*}
\end{proposition}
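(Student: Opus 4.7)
The plan is to combine the K\"unneth formula for $C\times D$ with the covering cohomology lemma already stated as Lemma \ref{CoveringCohomology}, and then read off the $G$-invariants summand by summand.

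First, since $S$ is isogenous to a product (so $G$ acts freely on $C\times D$) the quotient map $\pi:C\times D\to S$ is a topologically unramified $G$-covering of degree $|G|$. Applying Lemma \ref{CoveringCohomology} with $F=\mathbb{Q}$ yields the identification
\begin{equation*}
H^2(S,\mathbb{Q})\;\cong\;H^2(C\times D,\mathbb{Q})^G.
\end{equation*}

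Next I would invoke the K\"unneth formula, which is $G$-equivariant for the diagonal action, to decompose
\begin{equation*}
H^2(C\times D,\mathbb{Q})\;\cong\;\bigoplus_{i+j=2}H^i(C,\mathbb{Q})\otimes H^j(D,\mathbb{Q}).
\end{equation*}
Because $S$ is of unmixed type, $G=G^0$ acts componentwise on $C\times D$, so each K\"unneth summand is $G$-stable and the functor $(-)^G$ distributes over the direct sum. Moreover, for the outer summands I note that $G$ acts trivially on $H^0(C,\mathbb{Q})$ and $H^0(D,\mathbb{Q})$ (these are one-dimensional with trivial action, as $C$ and $D$ are connected), and also trivially on $H^2(C,\mathbb{Q})$ and $H^2(D,\mathbb{Q})$: indeed the $G$-action on each curve is holomorphic, hence orientation-preserving, so it fixes the fundamental class. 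Consequently
\begin{equation*}
\bigl(H^2(C,\mathbb{Q})\otimes H^0(D,\mathbb{Q})\bigr)^G\;=\;H^2(C,\mathbb{Q})\otimes H^0(D,\mathbb{Q}),
\end{equation*}
and similarly for the $(0,2)$-summand, giving the piece $U$. The middle summand gives exactly $Z=(H^1(C,\mathbb{Q})\otimes H^1(D,\mathbb{Q}))^G$ by definition, and assembling the three pieces yields the claimed isomorphism.

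There is no real obstacle here: the statement is a direct application of K\"unneth plus Lemma \ref{CoveringCohomology}. The only point that requires a short justification is the triviality of the $G$-action on $H^2$ of each curve, which follows from orientation preservation under holomorphic automorphisms; once this is in place the proof is essentially bookkeeping of the four K\"unneth summands.
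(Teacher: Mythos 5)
Your proof is correct and follows the same route as the paper: apply Lemma \ref{CoveringCohomology} to identify $H^2(S,\mathbb{Q})$ with $H^2(C\times D,\mathbb{Q})^G$, decompose via the ($G$-equivariant, since the action is diagonal in the unmixed case) K\"unneth formula, and use that $G$ acts trivially on $H^0$ and $H^2$ of each curve. Your added justification of the triviality on $H^2$ via orientation preservation is a detail the paper leaves implicit, but the argument is otherwise the same.
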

\begin{proof}
We compute the second cohomology of $C\times D$ with the Künneth formula (see \cite[Theorem 3.16]{Ha02}) and we apply Lemma \ref{CoveringCohomology}.  Since $G$ acts trivially on the zero cohomology and on the second cohomology of the curves $C$ and $D$ we get the result.
\end{proof}

\begin{remark}
Consider $H^2(S,\mathbb{Q})$ as rational Hodge structure of weight $2$.  Then $U,Z\le H^2(S,\mathbb{Q})$ are Hodge substructures.  In particular the subspace $U$ has dimension $2$, and $U\otimes_\mathbb{Q}\mathbb{C}\le H^{1,1}(S)$.  Then, as rational Hodge structure, it is isomorphic to the Tate structure $\mathbb{Q}^2(-1)$.
It follows that $H^2(S,\mathbb{Q})$ is determined, as Hodge structure, by $Z$.
\end{remark}

We recall a classical result of representation theory:
\begin{lemma}\label{productcompl}
Let $G$ be a finite group and let $\rho_i:G\to GL(V_i)$ $i=1,\,...,\,m$ be its irreducible complex representations, where $\rho_1$ is the trivial representation.  Then
\begin{equation*}
n_{\rho_i\otimes\rho_j}(\rho_1)=\langle\rho_i\otimes\rho_j,\rho_1\rangle =\begin{cases} 
1&\mbox{if }\rho_j=\overline{\rho_i},\\
0&\mbox{otherwise}.
\end{cases}
\end{equation*}
\end{lemma}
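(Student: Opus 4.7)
The plan is to prove this by a direct character-theoretic computation using Schur orthogonality, which is the standard tool for computing multiplicities of irreducible constituents in a tensor product representation.

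First I would invoke the formula for the multiplicity of a given irreducible representation as an inner product of characters:
\begin{equation*}
n_{\rho_i\otimes\rho_j}(\rho_1)=\langle \chi_{\rho_i\otimes\rho_j},\chi_1\rangle = \frac{1}{\#G}\sum_{g\in G}\chi_{\rho_i\otimes\rho_j}(g)\overline{\chi_1(g)}.
\end{equation*}
Since $\chi_1(g)=1$ for all $g\in G$ and since the character of a tensor product is the pointwise product of characters, $\chi_{\rho_i\otimes\rho_j}(g)=\chi_i(g)\chi_j(g)$, the expression collapses to
\begin{equation*}
n_{\rho_i\otimes\rho_j}(\rho_1)=\frac{1}{\#G}\sum_{g\in G}\chi_i(g)\chi_j(g).
\end{equation*}

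Next I would use the fact that the character of the dual representation $\overline{\rho_j}$ satisfies $\chi_{\overline{\rho_j}}(g)=\overline{\chi_j(g)}$ (this is the defining relation used already in the paper's definition of the dual, since complex conjugation of characters corresponds to $\tilde{\sigma}(\rho_j)$). Hence $\chi_j(g)=\overline{\chi_{\overline{\rho_j}}(g)}$, and substituting gives
\begin{equation*}
n_{\rho_i\otimes\rho_j}(\rho_1)=\frac{1}{\#G}\sum_{g\in G}\chi_i(g)\overline{\chi_{\overline{\rho_j}}(g)}=\langle \chi_i,\chi_{\overline{\rho_j}}\rangle.
\end{equation*}

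Finally, by the Schur orthogonality relations for irreducible characters, this inner product equals $1$ when $\rho_i=\overline{\rho_j}$ (equivalently $\rho_j=\overline{\rho_i}$) and $0$ otherwise, yielding the claimed formula. There is no real obstacle here: the statement is a textbook consequence of Schur orthogonality once the tensor-product character identity and the dual-character identity are in hand, both of which are available from the framework set up in the previous subsection.
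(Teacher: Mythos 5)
Your proof is correct and complete: the reduction to $\frac{1}{\#G}\sum_g\chi_i(g)\chi_j(g)$ via the tensor-product character identity, followed by rewriting $\chi_j=\overline{\chi_{\overline{\rho_j}}}$ and applying Schur orthogonality, is exactly the standard argument, and your use of $\chi_{\overline{\rho_j}}=\overline{\chi_j}$ is consistent with the paper's definition of the dual via complex conjugation. The paper itself offers no proof---it recalls the lemma as a classical fact---so there is nothing to compare beyond noting that your computation is the textbook one it implicitly relies on.
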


We need the corresponding result for rational representation:
\begin{proposition}\label{productrepr}
Let $G$ be a finite group and let $\tau_j:G\to GL(W_j)$ $j=1,\,...,\,t$ be its irreducible rational representations, where $\tau_1$ is the trivial representation. Then the multiplicity of the trivial representation in $\tau_j\otimes \tau_k$ is:
 \begin{equation*}
n_{\tau_j\otimes\tau_k}(\tau_1)=\begin{cases} 
s^2[K_i:\mathbb{Q}]&\mbox{if } j=k,\\
0&\mbox{otherwise},
\end{cases}
\end{equation*}
where $\tau_j\otimes\mathbb{C}=s\bigoplus_{\sigma\in Gal(K_i/\mathbb{Q})}\sigma(\rho_i)$.
\end{proposition}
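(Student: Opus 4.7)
The plan is to reduce everything to the complex case already covered by Lemma \ref{productcompl} via Proposition \ref{RationalRepresentation}. The key observation is that $\tau_1$ is the trivial rational representation, whose complexification is exactly the trivial complex representation $\rho_1$ (since $\rho_1$ has character field $\mathbb{Q}$ and Schur index $1$, its Galois orbit is just $\{\rho_1\}$). Hence for any rational representation $\tau$, the multiplicity $n_\tau(\tau_1)$ equals $n_{\tau\otimes\mathbb{C}}(\rho_1)$, so it suffices to compute the multiplicity of $\rho_1$ inside $(\tau_j\otimes\tau_k)\otimes\mathbb{C}$.

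Writing $\tau_j\otimes\mathbb{C}=s\bigoplus_{\sigma\in Gal(K_i/\mathbb{Q})}\sigma(\rho_i)$ and $\tau_k\otimes\mathbb{C}=s'\bigoplus_{\sigma'\in Gal(K_{i'}/\mathbb{Q})}\sigma'(\rho_{i'})$, I would distribute the tensor product:
\begin{equation*}
(\tau_j\otimes\tau_k)\otimes\mathbb{C}=ss'\bigoplus_{\sigma,\sigma'}\sigma(\rho_i)\otimes\sigma'(\rho_{i'}).
\end{equation*}
By Lemma \ref{productcompl}, each summand $\sigma(\rho_i)\otimes\sigma'(\rho_{i'})$ contributes to $n(\rho_1)$ exactly when $\sigma'(\rho_{i'})=\overline{\sigma(\rho_i)}$, and in that case with multiplicity one.

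For the case $j\neq k$, I would use the fact, already recorded after Proposition \ref{GaloisAction1}, that the complex conjugation $\tilde{\sigma}$ belongs to $Gal_N$, so $\overline{\sigma(\rho_i)}=(\tilde{\sigma}\sigma)(\rho_i)$ lies in the same $Gal_N$-orbit as $\rho_i$. Since distinct irreducible rational representations correspond to distinct Galois orbits, $\overline{\sigma(\rho_i)}$ cannot equal any $\sigma'(\rho_{i'})$, and the multiplicity vanishes. For the case $j=k$ (so $s=s'$, $K_i=K_{i'}$), for each fixed $\sigma\in Gal(K_i/\mathbb{Q})$ there is exactly one $\sigma'\in Gal(K_i/\mathbb{Q})$ with $\sigma'(\rho_i)=\overline{\sigma(\rho_i)}$, namely the restriction of $\tilde{\sigma}\sigma$ to $K_i$ (well defined because $K_i$ is normal over $\mathbb{Q}$, being a subfield of a cyclotomic field). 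This gives $[K_i:\mathbb{Q}]$ contributing pairs $(\sigma,\sigma')$, each weighted by the prefactor $s^2$, yielding total multiplicity $s^2[K_i:\mathbb{Q}]$.

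The only real subtlety I expect is the bookkeeping step of checking that the complex-conjugation map on the Galois orbit of $\rho_i$ is a genuine involution coming from an element of $Gal(K_i/\mathbb{Q})$, so that the pairing $\sigma\leftrightarrow \sigma'$ is a well-defined bijection producing exactly $[K_i:\mathbb{Q}]$ matching pairs. This is an easy consequence of $K_i\subseteq\mathbb{Q}(\xi_N)$ being an abelian (hence normal) extension of $\mathbb{Q}$, but it is the one place where one must be careful rather than merely symbolic.
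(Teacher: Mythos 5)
Your argument is correct and follows essentially the same route as the paper, whose proof is just the one-line citation of Lemma \ref{productcompl} together with Proposition \ref{RationalRepresentation}; you have simply spelled out the bookkeeping (complexify, distribute the tensor product over the Galois orbits, and match each $\sigma(\rho_i)$ with its conjugate inside the same orbit, using that $K_i\subseteq\mathbb{Q}(\xi_N)$ is normal over $\mathbb{Q}$). The orbit-disjointness argument for $j\neq k$ and the count of $[K_i:\mathbb{Q}]$ conjugate pairs weighted by $s^2$ are exactly what the paper's terse proof leaves implicit.
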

\begin{proof}
It follows fom Lemma \ref{productcompl} and Proposition \ref{RationalRepresentation}.
\end{proof}
Let $\tau_{j_1}:G\to GL(W_1)$ and $\tau_{j_2}:G\to GL(W_2)$ be two irreducible rational representations of $G$.  The group acts trivially on $(W_{j_1}\otimes W_{j_2})^G$ and then
\begin{equation*}
\dim (W_{j_1}\otimes W_{j_2})^G= n_{\tau_{j_1}\otimes\tau_{j_2}}(\tau_1).
\end{equation*}
In particular $\dim (W_{j_1}\otimes W_{j_2})^G\ne 0$ if and only if $j_1=j_2$, and in this case the dimension is determined by Proposition \ref{productrepr}.

Let $S=\frac{C\times D}{G}$ be a surface isogenous to a higher product of unmixed type and let $\tau_j:G\to GL(W_j)$, $j=1,\,...,\,t$ be the irreducible rational representations of $G$.  Let $\varphi_C:G\to GL(H^1(C,\mathbb{Q}))$ and $\varphi_D:G\to GL(H^1(D,\mathbb{Q}))$ be the actions induced by $G$ on the first cohomology of curves:
\begin{equation*}
\begin{split}
&\varphi_C=n_C(\tau_1)\tau_1\oplus\,...\oplus\,n_C(\tau_t)\tau_t,\\
&\varphi_D=n_D(\tau_1)\tau_1\oplus\,...\oplus\,n_D(\tau_t)\tau_t.
\end{split}
\end{equation*}
Then each irreducible rational representation $\tau_j$ determines a subspace of the rational Hodge structure $Z$ of dimension 
\begin{equation*}
n_C(\tau_j)n_D(\tau_j)n_{\tau_j\otimes\tau_j}(\tau_1).
\end{equation*}
In particular we obtain 
\begin{equation*}
\dim Z=\sum_{j=1}^{t}n_C(\tau_j)n_D(\tau_j)n_{\tau_j\otimes\tau_j}(\tau_1).
\end{equation*}

Let we focus on the case of regular surfaces isogenous to a higher product with $\chi(\mathcal{O}_S)=2$.

\begin{proposition}\label{classification}
Let $S=\frac{C\times D}{G}$ be a regular surface isogenous to a higher product with $\chi(\mathcal{O}_S)=2$.  Then one of the following cases holds:
\begin{itemize}
\item[a)] There exists an absolutely irreducible rational representation $\tau: G\to GL(W)$ such that
\begin{equation*}
\begin{split}
&n_C(\tau)=n_D(\tau)=2,\\
&n_C(\tau_j)\cdot n_D(\tau_j)=0,\quad\forall\,\tau_j\mbox{ different from }\tau.
\end{split}
\end{equation*}
\item[b)] There exists an irreducible rational representation $\tau:G\to GL(W)$ and an irreducible complex representation $\rho:G\to GL(V)$ with $\tau_\mathbb{C}=2\rho$ such that 
\begin{equation*}
\begin{split}
&n_C(\tau)=n_D(\tau)=1,\\
&n_C(\tau_j)\cdot n_D(\tau_j)=0,\quad\forall\,\tau_j\mbox{ different from }\tau.
\end{split}
\end{equation*}
\item[c)] There exists an irreducible rational representation $\tau:G\to GL(W)$ and an irreducible complex representation $\rho:G\to GL(V)$ with $\tau_\mathbb{C}=\rho\oplus\overline{\rho}$ such that 
\begin{equation*}
\begin{split}
&n_C(\tau)=1,\quad n_D(\tau)=2,\\ 
&n_C(\tau_j)\cdot n_D(\tau_j)=0,\quad\forall\,\tau_j\mbox{ different from }\tau.
\end{split}
\end{equation*}
\item[d)] There exist two irreducible rational representations $\tau_{j_1}:G \to GL(W_{j_1})$, $\tau_{j_2}:G\to GL(W_{j_2})$ and two irreducible complex representations $\rho_{i_1}:G\to GL(V_{i_1})$, $\rho_{i_2}:G\to GL(V_{i_2})$ with $\tau_{j_1}\otimes\mathbb{C}=\rho_{i_1}\oplus\overline{\rho_{i_1}}$, $\tau_{j_2}\otimes\mathbb{C}=\rho_{i_2}\oplus\overline{\rho_{i_2}}$ and $j_1\neq j_2$ such that 
\begin{equation*}
\begin{split}
&n_C(\tau_{j_1})=n_C(\tau_{j_2})=n_D(\tau_{j_1})=n_D(\tau_{j_2})=1,\\
&n_C(\tau_j)\cdot n_D(\tau_j)=0,\quad\forall\,\tau_j\mbox{ different from }\tau_{j_1},\,\tau_{j_2}.
\end{split}
\end{equation*}
\end{itemize}
\end{proposition}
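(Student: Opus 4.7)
My plan is to convert the statement into an arithmetic constraint on the multiplicities $n_C(\tau_j),\,n_D(\tau_j)$ and then to prune the possibilities with the parity obstruction of Lemma \ref{pari}.

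First I extract the governing numerical identity. Proposition \ref{diamond} gives $\dim H^2(S,\mathbb{Q})=6$ and the remark after Proposition \ref{decompositionsecond} gives $\dim U=2$, hence $\dim Z=4$. Combining the discussion immediately above the statement with Proposition \ref{productrepr}, this translates into
\begin{equation*}
\sum_{j=2}^{t} n_C(\tau_j)\,n_D(\tau_j)\,s_j^{\,2}\,[K_{i_j}:\mathbb{Q}]\;=\;4,
\end{equation*}
where $\tau_j\otimes\mathbb{C}=s_j\bigoplus_{\sigma\in\mathrm{Gal}(K_{i_j}/\mathbb{Q})}\sigma(\rho_{i_j})$. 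The sum may be started at $j=2$ because $q(S)=0$ together with Proposition \ref{q} forces $C/G\cong D/G\cong\mathbb{P}^{1}$, so Broughton's formula yields $n_C(\tau_1)=n_D(\tau_1)=0$.

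The second ingredient is parity: applying Lemma \ref{pari} to both $\varphi_C$ and $\varphi_D$, whenever $\rho_{i_j}$ is self-dual (equivalently $K_{i_j}\subset\mathbb{R}$) both $n_C(\tau_j)\,s_j$ and $n_D(\tau_j)\,s_j$ must be even. Writing $a_j=n_C(\tau_j)$, $b_j=n_D(\tau_j)$, $c_j=s_j^{\,2}[K_{i_j}:\mathbb{Q}]$, I would then enumerate the partitions of $4$ into positive integers of the form $a_jb_jc_j$, further factoring each $c_j$ as $s^2[K:\mathbb{Q}]$. A short case analysis gives: a single summand with $c=1$ forces $a=b=2$ (case (a)); a single summand with $c=4$ factored as $s=2,\,[K:\mathbb{Q}]=1$ forces $a=b=1$ (case (b)); a single summand with $c=2$ requires $K$ imaginary quadratic, so that $\tau\otimes\mathbb{C}=\rho\oplus\overline{\rho}$, and yields $\{a,b\}=\{1,2\}$ (case (c)); and the only multi-term partition surviving parity is $4=2+2$, each summand of the type in (c) with $a=b=1$, giving case (d).

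The routine cases are straightforward: every partition of $4$ containing a summand with $c=1$ and an odd $a$ or $b$, and every contribution coming from a totally real character field of odd degree, is eliminated by parity. The one genuinely delicate sub-case is $c=4$ factored as $s=1,\,[K:\mathbb{Q}]=4$ with $K$ a CM quartic field, where parity is silent; here the argument must use the finer Hodge data $\dim Z^{2,0}=\dim Z^{0,2}=1,\,\dim Z^{1,1}=2$ together with the explicit description of the $G$-invariants in the $\tau$-isotypic piece of $H^1(C,\mathbb{C})\otimes H^1(D,\mathbb{C})$. I expect this last sub-case to be the main obstacle; the rest of the proof is bookkeeping.
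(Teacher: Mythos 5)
Your skeleton is exactly the one the paper uses: $\dim Z=4$ by Proposition \ref{diamond} and the remark after Proposition \ref{decompositionsecond}, each nonzero contribution $n_C(\tau_j)\,n_D(\tau_j)\,n_{\tau_j\otimes\tau_j}(\tau_1)$ with $n_{\tau_j\otimes\tau_j}(\tau_1)=s_j^2[K_{i_j}:\mathbb{Q}]$ by Proposition \ref{productrepr}, parity from Lemma \ref{pari}, hence at most two representations contribute, and then an enumeration of the possible triples. Your routine eliminations (a summand with $s^2[K:\mathbb{Q}]=1$ and an odd multiplicity, real quadratic $K$ when $s^2[K:\mathbb{Q}]=2$, the analysis of the splitting $4=2+2$) are correct and reproduce the paper's cases $a$, $c$, $d$.

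The genuine gap is the sub-case you yourself flag and do not close: $n_C(\tau)=n_D(\tau)=1$, $s=1$, $[K:\mathbb{Q}]=4$ with $K$ imaginary, i.e.\ $\tau_\mathbb{C}=\rho\oplus\sigma(\rho)\oplus\overline{\rho}\oplus\overline{\sigma(\rho)}$. Worse, the tool you propose for it cannot work: the finer Hodge data of $Z$ is consistent with this configuration. Indeed, by Lemma \ref{pari} exactly one member of each conjugate pair $\{\rho,\overline{\rho}\}$ and $\{\sigma(\rho),\overline{\sigma(\rho)}\}$ occurs in $H^{1,0}(C)$, and likewise for $D$; choosing for instance $\rho,\sigma(\rho)$ in $H^{1,0}(C)$ and $\overline{\rho},\sigma(\rho)$ in $H^{1,0}(D)$, the four one-dimensional invariant lines pairing each constituent of $A_{C,\mathbb{C}}$ with its dual in $A_{D,\mathbb{C}}$ have Hodge types $(2,0)$, $(0,2)$, $(1,1)$, $(1,1)$, so $\dim Z^{2,0}=\dim Z^{0,2}=1$ and $\dim Z^{1,1}=2$, exactly the required numbers. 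So as written your argument only proves a weaker statement in which case $b$ also allows a Galois orbit of length four, not only $\tau_\mathbb{C}=2\rho$. To be fair, the paper's own proof is silent on the same point: it simply labels the triple $(n_C,n_D,n_{\tau\otimes\tau}(\tau_1))=(1,1,4)$ as case $b$, and the parity lemma does not exclude the quartic orbit. To get the statement as printed you need an additional input, e.g.\ a check against Gleissner's classification of the $21$ groups and $32$ families (Table \ref{tabella}), whose character tables carry no such quartic orbit with the required multiplicities -- this is how the exceptional types $b$, $c$, $d$ of Sections \ref{caseb}--\ref{cased} are actually pinned down -- or else a genuinely new representation-theoretic argument that you would have to supply.
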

\begin{proof}
For a regular surface $S$ isogenous to a higher product with $\chi(\mathcal{O}_S)=2$ we have $\dim Z=4$.
Notice that, for all the irreducible rational representations, the number $n_C(\tau_j)n_D(\tau_j)n_{\tau_j\otimes\tau_j}(\tau_1)$ is even by Lemma \ref{pari}.  Then we can have at most two irreducible rational representations $\tau_j$ such that $n_C(\tau_j)n_D(\tau_j)\ne 0$.
Suppose we have only one: then, again for Lemma \ref{pari}, we have three possibilities:
\begin{itemize}
\item $n_C(\tau_j)=n_D(\tau_j)=2$, $n_{\tau_j\otimes\tau_j}(\tau_1)=1$: this is the case $a$;
\item $n_C(\tau_j)=n_D(\tau_j)=1$, $n_{\tau_j\otimes\tau_j}(\tau_1)=4$: this is the case $b$;
\item $n_C(\tau_j)=2$, $n_D(\tau_j)=1$, $n_{\tau_j\otimes\tau_j}(\tau_1)=2$: this is the case $c$.
\end{itemize}
Suppose now we have contributions from two different irreducible rational representations $\tau_{j_1}$ and $\tau_{j_2}$.  Then
\begin{itemize}
\item $n_C(\tau_{j_i})=n_D(\tau_{j_i})=1$, $n_{\tau_{j_i}\otimes\tau_{j_i}}(\tau_1)=2$ for $i=1,2$: this is the case $d$.
\end{itemize}
\end{proof}

\begin{definition}
Let $S=\frac{C\times D}{G}$ be a regular surface isogenous to a higher product of unmixed type with $\chi(\mathcal{O}_S)=2$.  We say that $S$ is of type $a,\,b,\,c$ or $d$ if the corresponding case of Proposition \ref{classification} holds for $S$.
\end{definition}

As already mentioned, regular surfaces isogenous to a higher product of unmixed type with $\chi(\mathcal{O}_S)=2$ have been classified by Gleissner.  In \cite{Gl11} he proves that only $21$ groups admit an unmixed ramification structure such that the corresponding surface has $\chi(\mathcal{O}_S)=2$ and $q(S)=0$.  In particular $7$ groups admit more than one non-isomorphic structures, and he obtains $32$ families of regular surfaces isogenous to a higher product of unmixed type with $\chi(\mathcal{O}_S)=2$.  A complete list can be found in Table \ref{tabella} while the explicit forms of the unmixed ramification structures can be found in \cite{Gl11}
For all the surfaces in the list we determined if they are of type $a,\,b,\,c$ or $d$.

Let $G$ be one of the $14 $ groups in the following list:
\begin{equation*}
\begin{split}
&(\mathbb{Z}_2)^3\rtimes_\varphi\mathcal{S}_4,\quad
(\mathbb{Z}_2)^4\rtimes_\varphi \mathcal{D}_5,\quad
\mathcal{S}_5,\quad
(\mathbb{Z}_2)^4\rtimes_\psi \mathcal{D}_3,\\
&U(4,2),\quad
\mathcal{A}_5,\quad
\mathcal{S}_4\times\mathbb{Z}_2,\quad 
\mathcal{D}_4\times (\mathbb{Z}_2)^2,\quad (\mathbb{Z}_2)^4\rtimes_\varphi\mathbb{Z}_2,\\
&\mathcal{S}_4,\quad
\mathcal{D}_4\times\mathbb{Z}_2,\quad
(\mathbb{Z}_2)^2\rtimes_\varphi\mathbb{Z}_4,\quad
(\mathbb{Z}_2)^4,\quad
(\mathbb{Z}_2)^3.
\end{split}
\end{equation*}
For all the irreducible complex representations $\rho:G\to GL(V)$ we get $K_\rho\subseteq\mathbb{R}$ and the Schur index of $\rho$ is equal $1$.
Therefore the corresponding surfaces $S$ are of type $a$.

We verified that also the surfaces related to the groups 
\begin{equation*}
PSL(2,\mathbb{F}_7)\times\mathbb{Z}_2,\quad
PSL(2,\mathbb{F}_7),\quad
(\mathbb{Z}_2)^3\rtimes_\varphi\mathcal{D}_4
\end{equation*}
are of type $a$, although these groups admit irreducible complex representations with $K_\rho\not\subseteq\mathbb{R}$.

\begin{example}
As example we study in detail the group $PSL(2,\mathbb{F}_7)$.  $G:=PSL(2,\mathbb{F}_7)$ has $6$ irreducible complex representations $\rho_1,...,\rho_6$ associated to the charaters $\chi_1,...,\,\chi_6$:
\begin{equation*}
\begin{array}{c|cccccc}
&Id&2&3&4&7a&7b\\
\hline
\chi_1&1&1&1&1&1&1\\
\chi_2&3&-1&0&1&\xi&\overline{\xi}\\
\chi_3&3&-1&0&1&\overline{\xi}&\xi\\
\chi_4&6&2&0&0&-1&-1\\
\chi_5&7&-1&1&-1&0&0\\
\chi_6&8&0&-1&0&1&1\\
\end{array}
\end{equation*}
where $\xi=\frac{-1+i\sqrt{7}}{2}$.  By Proposition \ref{RationalRepresentation}, $G$ has only $5$ irreducible rational representations $\tau_1,...,\,\tau_5$.  One has
\begin{equation*}
\begin{split}
\tau_1\otimes_\mathbb{Q}\mathbb{C}=&\rho_1,\\
\tau_2\otimes_\mathbb{Q}\mathbb{C}=&\rho_2\oplus\rho_3,\\
\tau_3\otimes_\mathbb{Q}\mathbb{C}=&\rho_4,\\
\tau_4\otimes_\mathbb{Q}\mathbb{C}=&\rho_5,\\
\tau_5\otimes_\mathbb{Q}\mathbb{C}=&\rho_6.
\end{split}
\end{equation*}
The group $PSL(2,\mathbb{F}_7)$ admits two non-isomorphic unmixed structures $(T_{C_1}, T_{D_1})$ and $(T_{C_2}, T_{D_2})$ of types $([7^3],\,[3^2,4])$ and $([3^2,7],\,[4^3])$ respectively.
Since there is only one conjugacy class of elements of order $3$ and one conjugacy class of elements of order $4$ in $G$ (denoted in the table above with $3$ and $4$), we can apply the Broughton formula to the curves $D_1$ and $D_2$ easily.  We get:
\begin{equation*}
\begin{array}{c|ccccc}
&\tau_1&\tau_2&\tau_3&\tau_4&\tau_5\\
\hline
\varphi_{D_1}&0&0&0&0&2\\
\varphi_{D_2}&0&0&0&4&2
\end{array}
\end{equation*}
So, even if $G$ has two non self-dual representations ($\rho_2$ and $\rho_3$), the surfaces isogenous to a higher product associated to both $(T_{C_1}, T_{D_1})$ and $(T_{C_2}, T_{D_2})$ are of type $a$.
\end{example}

Finally surfaces related to the groups 
\begin{equation*}
G(128,36),\quad
(\mathbb{Z}_2)^4\rtimes_\varphi \mathcal{D}_3,\quad (\mathbb{Z}_2)^3\rtimes_\varphi\mathbb{Z}_4,\quad
(\mathbb{Z}_3)^2,
\end{equation*}
are not of type $a$ and we will study them in the next section.
The complete list, with the corresponding type, is summarized in Table \ref{tabella}, at the end of this section.

\begin{theorem}\label{main}
Let $S=\frac{C\times D}{G}$ be a regular surface isogenous to a higher product of unmixed type with $\chi(\mathcal{O}_S)=2$ and assume that $S$ is of type $a$.
Then there exist two elliptic curves $E_C$ and $E_D$ such that $H^2(S,\mathbb{Q})\cong H^2(E_C\times E_D,\mathbb{Q})$ as rational Hodge structures.
\end{theorem}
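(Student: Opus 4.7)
The plan is to reduce Theorem~\ref{main} to a statement about the K\"unneth summand $Z$ introduced in Proposition~\ref{decompositionsecond}. For any two elliptic curves $E_C$ and $E_D$ one has
\[
H^2(E_C\times E_D,\mathbb{Q})\;\cong\;\mathbb{Q}(-1)^{\oplus 2}\oplus\bigl(H^1(E_C,\mathbb{Q})\otimes H^1(E_D,\mathbb{Q})\bigr),
\]
and the first summand is canonically isomorphic to the Tate substructure $U\subset H^2(S,\mathbb{Q})$. Thus it suffices to construct elliptic curves $E_C,E_D$ such that $Z\cong H^1(E_C,\mathbb{Q})\otimes H^1(E_D,\mathbb{Q})$ as rational Hodge structures.

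The type-$a$ hypothesis is the key simplification. Only one irreducible rational representation $\tau:G\to GL(W)$ contributes to $Z$, and it does so with multiplicity $2$ on both curves. Writing $A_C\subseteq H^1(C,\mathbb{Q})$ and $A_D\subseteq H^1(D,\mathbb{Q})$ for the corresponding $\tau$-isotypical components, Proposition~\ref{productrepr} gives $Z=(A_C\otimes A_D)^G$. Since $\tau$ is absolutely irreducible, $\mathbb{D}_\tau=End_G(W)=\mathbb{Q}$, so by Schur's lemma the spaces $U_C:=Hom_G(W,A_C)$ and $U_D:=Hom_G(W,A_D)$ are $2$-dimensional rational vector spaces with trivial $G$-action, and the evaluation maps yield $G$-equivariant isomorphisms $A_C\cong W\otimes_\mathbb{Q} U_C$ and $A_D\cong W\otimes_\mathbb{Q} U_D$.

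The heart of the argument will be lifting these to isomorphisms of rational Hodge structures, where $W$ is regarded as pure of type $(0,0)$. Complexifying, the subspace $A_C^{1,0}\subset W_\mathbb{C}\otimes(U_C\otimes\mathbb{C})$ is isomorphic to $\rho=\tau\otimes\mathbb{C}$ with multiplicity $1$, by Lemma~\ref{pari} combined with the self-duality of $\rho$ forced by $K_\rho=\mathbb{Q}$. Schur's lemma then forces $A_C^{1,0}=W_\mathbb{C}\otimes L_C$ for a unique line $L_C\subset U_C\otimes\mathbb{C}$. Setting $(U_C)^{1,0}:=L_C$ and $(U_C)^{0,1}:=\overline{L_C}$ equips $U_C$ with a weight-$1$ rational Hodge structure of Hodge numbers $(1,1)$, and by construction $A_C\cong W\otimes U_C$ as rational Hodge structures. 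Restricting the polarization of $H^1(C,\mathbb{Q})$ to $A_C$ and transferring it through this tensor decomposition produces a polarization on $U_C$, so $U_C\cong H^1(E_C,\mathbb{Q})$ for some elliptic curve $E_C$ (determined up to isogeny). Doing the same for $D$ gives $E_D$.

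Finally, since $G$ acts trivially on $U_C$ and $U_D$, taking $G$-invariants commutes with tensoring by them, yielding
\[
Z\;=\;(A_C\otimes A_D)^G\;\cong\;(W\otimes W)^G\otimes U_C\otimes U_D.
\]
By Proposition~\ref{productrepr}, $(W\otimes W)^G$ is $1$-dimensional with trivial weight-$0$ Hodge structure, hence $Z\cong U_C\otimes U_D\cong H^1(E_C,\mathbb{Q})\otimes H^1(E_D,\mathbb{Q})$, which combined with the first paragraph proves the theorem. The main obstacle will be the Schur-theoretic identification of the Hodge decomposition of $A_C$ with that of $W\otimes U_C$; the fact that $U_C$ inherits an honest polarizable Hodge structure of weight $1$ is where absolute irreducibility of $\tau$ and self-duality of $\rho$ are essential.
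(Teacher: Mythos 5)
Your proposal is correct, but it proves the key isomorphism by a mechanism genuinely different from the paper's. The paper also reduces to the summand $Z=(A_C\otimes A_D)^G$, but it then invokes the group algebra decomposition (Proposition \ref{isogenousdecomposition}) to produce $2$-dimensional Hodge \emph{sub}structures $B_C\subset H^1(C,\mathbb{Q})$, $B_D\subset H^1(D,\mathbb{Q})$ with $A_C\cong B_C^{\oplus n}$, $A_D\cong B_D^{\oplus n}$, takes $E_C,E_D$ to be the corresponding elliptic curves, and compares $Z$ with $B_C\otimes B_D$ via a Hodge morphism $\psi\colon B_C\otimes B_D\to Z$ obtained by restricting the invariant projection; since $\psi$ need not be injective, the paper finishes with a case analysis splitting both sides into $Im(\psi)$ (resp.\ $P$) plus Tate summands, and it must additionally arrange (by changing the choice of $B_C,B_D$) that $\dim Im(\psi)^{2,0}=1$. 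You instead work with the multiplicity space $U_C=Hom_G(W,A_C)$, use Schur's lemma to put a weight-$1$ Hodge structure on it so that $A_C\cong W\otimes U_C$ simultaneously as $G$-module and Hodge structure, and then the invariants compute themselves: $Z\cong(W\otimes W)^G\otimes U_C\otimes U_D\cong U_C\otimes U_D$, with no auxiliary map, no kernel/image analysis, and no choice to adjust. What each route buys: the paper's argument stays entirely within machinery it has already set up and its $B_C,B_D$ are honest substructures of the curves' cohomology, so it is immediate that they come from elliptic curves; your argument yields a canonical isomorphism $Z\cong H^1(E_C,\mathbb{Q})\otimes H^1(E_D,\mathbb{Q})$ and cleanly sidesteps the delicate Step 2, at the price of having to know that the abstract structure $U_C$ is the $H^1$ of an elliptic curve. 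Your one glossed step is exactly there ("transferring the polarization"), but it is harmless: any $2$-dimensional weight-$1$ rational Hodge structure with $h^{1,0}=1$ is polarizable (alternatively, $U_C$ is isomorphic to the paper's substructure $B_C$), so the conclusion stands.
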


\begin{proof}
The proof consists of two steps: in the first one we construct the two elliptic curves $E_C$ and $E_D$; in the second one we prove that $H^2(S,\mathbb{Q})\cong H^2(E_C\times E_D,\mathbb{Q})$.\\
\textbf{Step 1}: By hypothesis there exists an absolutely irreducible rational representation $\tau:G\to GL(W)$ such that $n_C(\tau)=n_D(\tau)=2$; let $\dim W=n$.  We denote by $A_C$ and $A_D$ the isotypical components related to $\tau$ in $H^1(C,\mathbb{Q})$ and $H^1(D,\mathbb{Q})$: $A_C$ and $A_D$ are, at the same time, rational Hodge substructure and $G$-subrepresentations of dimension $2n$ and we obtain
\begin{equation*}
Z\cong \left(A_C\otimes A_D\right)^G,
\end{equation*}
where $Z$ is the Hodge substructure defined in Proposition \ref{decompositionsecond}.  Since $\tau$ is an absolutely irreducible rational representation, the corresponding skew-field $\mathbb{D}$ is simply $\mathbb{Q}$.  So, by Proposition \ref{isogenousdecomposition}, we get $A_C\cong B_C^{\oplus n}$ and $A_D\cong B_D^{\oplus n}$ where $B_C$ and $B_D$ are Hodge substructures, but no longer $G$-subrepresentations, of $A_C$ and $A_D$ with $\dim B_C=\dim B_D=2$.  Via the natural correspondence between complex tori and Hodge structures, there exists two elliptic curves $E_C$ and $E_D$, defined up to isogeny, such that 
\begin{equation*}
B_C\cong H^1(E_C,\mathbb{Q}),\qquad B_D\cong H^1(E_D,\mathbb{Q}),
\end{equation*}
as rational Hodge structures.\\
\textbf{Step 2}:
The Hodge structures of weight two $Z$ and $B_C\otimes B_D$ have the same dimension and the same Hodge numbers; in particular $\dim Z^{2,0}=\dim (B_C\times B_D)^{2,0}=1$.  The action of $G$ provides a Hodge homomorphism $A_C\otimes A_D\to Z$: by restriction we get a map $\psi: B_C\otimes B_D\to Z$.  
Consider the Hodge substructure $Im(\psi)$. We can assume that $\dim Im(\psi)^{2,0}=1$: otherwise we have to change the  choice of $B_C$ and $B_D$ in $A_C$ and $A_D$.
If $\psi$ is an isomorphism we are done.  Otherwise let $k:=\dim Ker(\psi)$.  We have the decompositions:
\begin{equation*}
B_C\otimes B_D\simeq P\oplus Ker(\psi),\qquad Z\simeq Im(\psi)\oplus \mathbb{Q}^k(-1),
\end{equation*}
where $P$ is a Hodge substructure with $\dim P^{2,0}=1$ and $\dim P^{1,1}=2-k$.  The Hodge structures $B_C\otimes B_D$ and $Z$ are isomorphic since
\begin{itemize}
\item $\psi$ defines an isomorphism between $P$ and $Im(\psi)$,
\item $\dim ker(\psi)^{2,0}=0$ and then $ker(\psi)\simeq \mathbb{Q}^k(-1)$.
\end{itemize}
\end{proof}

\begin{table}
\begin{center}
\begin{tabular}{|c|c|c|c|c|c|}
\hline 
$G$& $|G|$ & SGL &$g(C)$ & $g(D)$& type\\
\hline
$PSL(2,\mathbb{F}_7)\times\mathbb{Z}_2$& 336 & $\langle 336,209\rangle$ & 17 & 43 & a\\
$(\mathbb{Z}_2)^3\rtimes_\varphi\mathcal{S}_4$& 192& $\langle 192,995\rangle$ & 49 & 9 & a\\
$PSL(2,\mathbb{F}_7)$& 168 & $\langle 168,42\rangle$& 49 & 8 & a\\
$PSL(2,\mathbb{F}_7)$& 168 & $\langle 168,42\rangle$& 17 & 22 & a\\
$(\mathbb{Z}_2)^4\rtimes_\varphi \mathcal{D}_5$& 160 & $\langle 160,234\rangle$& 5 & 81 & a\\
$G(128,36)$ & 128 & $\langle 128,36\rangle$& 17 & 17 & \textbf{b}\\
$\mathcal{S}_5$ & 120 & $\langle 120,34\rangle$& 9 & 31 & a\\
$(\mathbb{Z}_2)^4\rtimes_\varphi \mathcal{D}_3$ & 96 & $\langle 96,195\rangle$& 5 & 49 & \textbf{c}\\
$(\mathbb{Z}_2)^4\rtimes_\psi \mathcal{D}_3$ & 96 & $\langle 96,227\rangle$& 25 & 9 & a\\
$(\mathbb{Z}_2)^3\rtimes_\varphi \mathcal{D}_4$ & 64 & $\langle 64,73\rangle$& 9 & 17 & a\\
$U(4,2)$ & 64 & $\langle 64,138\rangle$& 9 & 17 & a\\
$\mathcal{A}_5$ & 60 & $\langle 60,5\rangle$& 13 & 11 & a\\
$\mathcal{A}_5$ & 60 & $\langle 60,5\rangle$& 41 & 4 & a\\
$\mathcal{A}_5$ & 60 & $\langle 60,5\rangle$& 9 & 16 & a\\
$\mathcal{A}_5$ & 60 & $\langle 60,5\rangle$& 5 & 31 & a\\
$\mathcal{S}_4\times\mathbb{Z}_2$ & 48 & $\langle 48,48\rangle$& 5 & 25& a \\
$\mathcal{S}_4\times\mathbb{Z}_2$ & 48 & $\langle 48,48\rangle$& 9 & 13 & a\\
$\mathcal{S}_4\times\mathbb{Z}_2$ & 48 & $\langle 48,48\rangle$& 13 & 9 & a\\
$\mathcal{S}_4\times\mathbb{Z}_2$ & 48 & $\langle 48,48\rangle$& 3 & 49 & a\\
$(\mathbb{Z}_2)^3\rtimes_\varphi\mathbb{Z}_4$ & 32 & $\langle 32,22\rangle$& 9 & 9 & \textbf{d}\\
$\mathcal{D}_4\times (\mathbb{Z}_2)^2$ & 32 & $\langle 32,46\rangle$& 9 & 9 & a\\
$(\mathbb{Z}_2)^4\rtimes_\varphi\mathbb{Z}_2$ & 32 & $\langle 32,27\rangle$& 17 & 5 & a\\
$(\mathbb{Z}_2)^4\rtimes_\varphi\mathbb{Z}_2$ & 32 & $\langle 32,27\rangle$& 9 & 9 & a\\
$\mathcal{S}_4$ & 24 & $\langle 24,12\rangle$& 5 & 13 & a\\
$\mathcal{S}_4$ & 24 & $\langle 24,12\rangle$& 3 & 25 & a\\
$\mathcal{D}_4\times\mathbb{Z}_2$ & 16 & $\langle 16,11\rangle$& 9 & 5 & a\\
$(\mathbb{Z}_2)^2\rtimes_\varphi\mathbb{Z}_4$& 16 & $\langle 16,3\rangle$& 9 & 5 & a \\
$(\mathbb{Z}_2)^4$ & 16 & $\langle 16,14\rangle$& 9 & 5 & a\\
$\mathcal{D}_4\times\mathbb{Z}_2$ & 16 & $\langle 16,11\rangle$& 3 & 17 & a\\
$(\mathbb{Z}_3)^2$ & 9 & $\langle 9,2\rangle$& 7 & 4 & \textbf{c}\\
$(\mathbb{Z}_2)^3$ & 8 & $\langle 8,5\rangle$& 5 & 5 & a\\
$(\mathbb{Z}_2)^3$ & 8 & $\langle 8,5\rangle$& 3 & 9 & a\\
\hline
\end{tabular}
\caption{\label{tabella} Complete list of groups that admit an unmixed ramification structure such that the corresponding surfaces $S$ isogenous to a higher product has $\chi(\mathcal{O_S})= 2$ and $q(S)=0$.  SGL is the pair that identifies the group in the Small Groups Library (on Magma).}
\end{center}
\end{table}

\section{The exceptional cases}
In this section we study one by one the families of surfaces in Table \ref{tabella} not of type $a$.\\
Given a finite group $G$ and an unmixed ramification structure $(T_C,T_D)$ for $G$ we will use the following notation:
\begin{itemize}
\item $f:C\to\mathbb{P}^1$ and $h:D\to\mathbb{P}^1$ are the Galois covering associated to the spherical system of generators $T_C$ and $T_D$;
\item $S=\frac{C\times D}{G}$ is the surface isogenous to a product of unmixed type corresponding to the unmixed ramification structure;
\item $Z<H^2(S,\mathbb{Q})$ is the $4$-dimensional Hodge substructure with $\dim Z^{2,0}=1$ defined by 
\begin{equation*}
Z=\left(H^1(C,\mathbb{Q})\otimes H^1(D,\mathbb{Q})\right)^G.
\end{equation*}
\end{itemize}
Let $\tau:G\to GL(W)$ be an irreducible rational representation of $G$ and let $A_C,\,A_D$ be the isotypical components of $H^1(C,\mathbb{Q})$ and $H^1(D,\mathbb{Q})$ related to $\tau$.
Assume that $Z\cong(A_C\otimes A_D)^G$: as described in the proof of Theorem \ref{classification} this is exactly what happens for surfaces of type $a,\,b$ and $c$.\\
Let $H\triangleleft G$ be the normal subgroup $H=ker(\tau)$.  Then we get
\begin{equation}\label{coH}
\begin{split}
Z=&\left(H^1(C,\mathbb{Q})\otimes H^1(D,\mathbb{Q})\right)^G=\\
=&\left(H^1(C,\mathbb{Q})^H\otimes H^1(D,\mathbb{Q})^H\right)^{G/H}.
\end{split}
\end{equation}
\begin{remark}\label{rem}
Notice that, for a general subgroup $H\le G$, we have
\begin{equation*}
\left(H^1(C,\mathbb{Q})\otimes H^1(D,\mathbb{Q})\right)^H\not\cong\left(H^1(C,\mathbb{Q})^H\otimes H^1(D,\mathbb{Q})^H\right).
\end{equation*}
For example for $H=G$ we get the Hodge structure $Z$ on the left and the empty vector space on the right, since $C/G\cong D/G\cong\mathbb{P}^1$.
Equation \eqref{coH} holds because our specific choice of the subgroup $H$.
\end{remark}
Using this idea (with appropriate modifications for the case $d$) we extend the result of Theorem \ref{main} to the remaining surfaces.

\subsection{Case b}\label{caseb}
Let $G$ be the finite group $G=G(128,36)$ with presentation:
\begin{equation*}
G=\left\langle g_1,\,...\,,g_7\quad\Big|\quad
\begin{array}{lll}
g_1^2=g_4 & g_2^2=g_5 & g_2^{g_1}=g_2g_3\\ 
g_3^{g_1}=g_3g_6 & g_3^{g_2}=g_3g_7 & g_4^{g_2}=g_4g_6\\
g_5^{g_1}=g_5g_7
\end{array}
\right\rangle,
\end{equation*}
where $g_i^{g_j}:=g_j^{-1}g_ig_j$; $G$ has order $128$ and it determined by the pair $\langle 128,36\rangle$ in the Small Groups Library on Magma.  Consider the unmixed ramification structure $(T_C,\,T_D)$ of type $([4^3],[4^3])$:
\begin{equation*}
\begin{array}{l}
T_C=[g_1g_2g_4g_6, g_1g_4g_5g_6,g_2g_3g_4g_7],\\ T_D=[g_1g_2g_3g_6g_7,g_2g_5g_7,g_1g_3g_4g_7].
\end{array}
\end{equation*}
By direct computation we verify that the corresponding surface isogenous to a product $S$ is of type $b$, i.e.\ there exists an irreducible rational representation $\tau:G\to GL(W)$, $\dim W=4$ and an irreducible complex representation $\rho:G\to GL(V)$, $\dim V=2$ with $\tau_\mathbb{C}=2\rho$ such that
\begin{equation*}
\begin{split}
&n_C(\tau)=n_D(\tau)=1,\\
&n_C(\tau_j)\cdot n_D(\tau_j)=0\quad\mbox{$\forall\tau_j$ different from $\tau$}.
\end{split} 
\end{equation*}
Let $H\triangleleft G$ be the normal subgroup $H:=Ker(\tau)$: a set of generators for $H$ is
\begin{equation*}
H=\langle g_7,g_6,g_3g_4,g_4g_5\rangle.
\end{equation*}
The quotient group $G/H$ has order $8$ and it is isomorphic to the quaternion group $Q_8$.
Consider the intermediate coverings:
\begin{equation*}
\xymatrix{
C \ar[r]^H \ar[dr]_{G}
& C' \ar[d]^{Q_8}\\
& \mathbb{P}^1}
\qquad
\xymatrix{
D \ar[r]^H \ar[dr]_{G}
& D' \ar[d]^{Q_8}\\
& \mathbb{P}^1}
\end{equation*}
The curves $C'$ and $D'$ have genus $2$, by Riemann-Hurwitz formula.  Moreover the quaternion group $Q_8$ acts on their rational cohomology by the rational representation of dimension $4$ described in Example \ref{quaternion}.
By the Remark \ref{rem} we get
\begin{equation*}
H^2(S,\mathbb{Q})\cong H^2\left(C'\times D',\mathbb{Q}\right)^{Q_8}.
\end{equation*}
\begin{proposition}
Let $S$ be the surface isogenous to a higher product defined above.  Then $H^2(S,\mathbb{Q})\cong H^2(E_{\sqrt{-2}}\times E_{\sqrt{-2}},\mathbb{Q})$ where $E_{\sqrt{-2}}$ is the elliptic curve
\begin{equation*}
E_{\sqrt{-2}}=\frac{\mathbb{C}}{\mathbb{Z}\oplus\sqrt{-2}\mathbb{Z}}.
\end{equation*}
\end{proposition}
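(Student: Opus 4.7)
The plan is to exploit the reduction $H^2(S,\mathbb{Q}) \cong H^2(C' \times D', \mathbb{Q})^{Q_8}$ already noted just before the statement, and then to identify the Jacobians of the two intermediate genus-$2$ curves $C'$ and $D'$ as QM abelian surfaces isogenous to $E_{\sqrt{-2}} \times E_{\sqrt{-2}}$.

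First I would verify that $H^1(C',\mathbb{Q})$ and $H^1(D',\mathbb{Q})$ are both isomorphic to the unique $4$-dimensional irreducible rational representation $\tau_5$ of $Q_8$ described in Example \ref{quaternion}. Both curves have genus $2$, the quotient group $Q_8 = G/H$ acts faithfully on each by construction of $H = \ker(\tau)$, and $\tau_5$ is the only faithful irreducible rational representation of $Q_8$ (the remaining irreducible rational representations are one-dimensional and factor through $Q_8/\{\pm 1\}$, so they cannot build up a faithful $4$-dimensional piece). An independent check is obtained by applying Broughton's formula (Proposition \ref{Broughton}) to the spherical systems of generators for $C' \to \mathbb{P}^1$ and $D' \to \mathbb{P}^1$ induced from $T_C$ and $T_D$ modulo $H$.

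Since $\mathrm{End}_{Q_8}(\tau_5)$ is the definite rational quaternion algebra $\mathbb{D} = \bigl(\frac{-1,-1}{\mathbb{Q}}\bigr)$ and $\dim_{\mathbb{D}}\tau_5 = 1$, the isogenous decomposition of Proposition \ref{isogenousdecomposition} equips $J(C')$ and $J(D')$ with actions of $\mathbb{D}$ on their rational Hodge structures of weight one. Hence both are abelian surfaces with quaternion multiplication, and by Albert's classification they must be isogenous to $E \times E$ for some CM elliptic curve $E$ whose CM field embeds as an imaginary quadratic subfield of $\mathbb{D}$.

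The main obstacle is pinning down this elliptic factor as $E_{\sqrt{-2}}$. I would do this explicitly: write $C'$ and $D'$ as hyperelliptic $Q_8$-covers of $\mathbb{P}^1$ branched at three points with local monodromy of order $4$ (as forced by the types of $T_C$ and $T_D$ modulo $H$), then compute a period matrix from a basis of holomorphic differentials compatible with the $\tau_5$-action. The expected outcome is that the pure quaternion realizing the complex structure on $H^1(C',\mathbb{R})$ (and on $H^1(D',\mathbb{R})$) squares to $-2$ under the natural normalization forced by the polarization, so the elliptic factor is isogenous to the curve with CM by $\mathbb{Z}[\sqrt{-2}]$, i.e.\ $E_{\sqrt{-2}}$.

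Once $J(C') \sim J(D') \sim E_{\sqrt{-2}} \times E_{\sqrt{-2}}$ is established, the final comparison mirrors Step 2 of the proof of Theorem \ref{main}: the $Q_8$-equivariant evaluation induces a morphism of weight-$2$ rational Hodge structures from $H^1(E_{\sqrt{-2}}) \otimes H^1(E_{\sqrt{-2}})$ to $Z$; both sides have dimension $4$ and $h^{2,0}=1$, so any kernel sits entirely in the $(1,1)$-part, is of Tate type, and can be absorbed in the complementary Tate summand of $H^2(S,\mathbb{Q}) = U \oplus Z$, yielding the claimed isomorphism of rational Hodge structures.
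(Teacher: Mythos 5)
Your overall strategy runs parallel to the paper's: the reduction $H^2(S,\mathbb{Q})\cong H^2(C'\times D',\mathbb{Q})^{Q_8}$, the identification of $H^1(C',\mathbb{Q})$ and $H^1(D',\mathbb{Q})$ with the four-dimensional irreducible rational representation of $Q_8$ from Example \ref{quaternion}, and the final comparison via a $Q_8$-equivariant Hodge morphism into $Z$ argued as in Step 2 of Theorem \ref{main} are all exactly what the paper does. The genuine gap is at the decisive step, the identification of the elliptic factor as $E_{\sqrt{-2}}$. Your quaternionic-multiplication/Albert argument only yields that $J(C')$ and $J(D')$ are isogenous to $E\times E$ with $E$ an elliptic curve whose CM field embeds into $\bigl(\tfrac{-1,-1}{\mathbb{Q}}\bigr)$; since that algebra is ramified exactly at $2$ and $\infty$, this allows $\mathbb{Q}(i)$, $\mathbb{Q}(\sqrt{-2})$, $\mathbb{Q}(\sqrt{-3})$ and infinitely many other imaginary quadratic fields, so it cannot single out $\sqrt{-2}$. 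You acknowledge this and defer it to an explicit period-matrix computation, but you only report the ``expected outcome''; as written, the statement $E\sim E_{\sqrt{-2}}$ is asserted, not proved, and it is precisely the content that distinguishes this proposition from a generic QM statement.

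The paper closes that step differently: it invokes the classical fact that a genus-$2$ curve $X$ with $Q_8\le Aut(X)$ and $X/Q_8\cong\mathbb{P}^1$ has non-simple Jacobian isogenous to $E_{\sqrt{-2}}\times E_{\sqrt{-2}}$. This is available because such an $X$ is unique: Riemann--Hurwitz together with the fact that $-1$ is the only involution in $Q_8$ forces branch type $[4,4,4]$, so the covering has three branch points and is rigid (it is the curve $y^2=x^5-x$), and its elliptic quotients have CM by $\mathbb{Z}[\sqrt{-2}]$. To complete your proof you must either quote this known result (as the paper does) or actually carry out the period/CM computation for the rigid $[4,4,4]$-cover rather than stating its anticipated result; everything before and after that point matches the paper's argument.
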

\begin{proof}
Let $X$ be a curve of genus $2$ such that $Q_8\le Aut(X)$ and $X/Q_8\cong\mathbb{P}^1$.  Then its Jacobian is not simple, and in particular it is isogneous to the self-product of the elliptic curve $E_{\sqrt{-2}}$.
The action of $Q_8$ induces a Hodge morphism $\psi$
\begin{equation*}
\psi: H^1(E_{\sqrt{-2}},\mathbb{Q})\otimes H^1(E_{\sqrt{-2}},\mathbb{Q})\to Z.
\end{equation*}
Now, arguing as in the step $2$ of the proof of the Theorem \ref{main}, we conclude that
\begin{equation*}
H^2(S,\mathbb{Q})\cong H^2\left(C'\times D',\mathbb{Q}\right)^{Q_8}\cong H^2(E_{\sqrt{-2}}\times E_{\sqrt{-2}},\mathbb{Q}).
\end{equation*}
\end{proof}
\begin{remark}
The covering maps $f:C\to\mathbb{P}^1$ and $h:D\to\mathbb{P}^1$ have both $3$ branching values.  It follows that the curves $C$ and $D$ are determined up to isomorphism, by the Riemann Existence Theorem.  In particular the pairs $(C,\,f)$ and $(D,\,g)$ are Belyi pairs and the surface $S$ is a Beauville surface.
\end{remark} 

\subsection{Case c}\label{casec}
Two groups occur in this case.  Let $G$ be the finite group $(\mathbb{Z}_3)^2$ and consider the unmixed ramification structure $(T_C,T_D)$:
\begin{equation*}
\begin{split}
T_C=&[(1,1),(2,1),(1,1),(1,2),(1,1)];\\
T_D=&[(0,2),(0,1),(1,0),(2,0)].
\end{split}
\end{equation*}
This structure has been already studied in Example \ref{z32}: notice that the corresponding surface isogenous to a product $S$ is of type $c$.\\
In particular, using the notation of Example \ref{z32}, there is an irreducible rational representation $\tau_4:G\to GL(W_4)$ such that
\begin{itemize}
\item $\tau_4\otimes\mathbb{C}=\rho_5\oplus\rho_9$;
\item $n_C(\tau_4)=1$ and $n_D(\tau_4)=2$.
\end{itemize}
Let $H$ be the normal subgroup $H:=Ker(\rho_5)=Ker(\rho_9)$: a set of generators for $H$ is
\begin{equation*}
H=\langle(2,1)\rangle.
\end{equation*} 
Notice that $H\cong\mathbb{Z}_3$ and also $G/H\cong\mathbb{Z}_3$.
Let us consider the intermediate coverings $C'=C/H$ and $D' =D/H$ of genus $g(C')=1$ and $g(D')=2$.\\
The curve $D'$ is a curve of genus $2$ with an automorphism $\sigma$ of order $3$ such that $D'/\langle\sigma\rangle\simeq\mathbb{P}^1$.  It follows that its Jacobian is not simple and in particular it is isogenous to the self-product of an elliptic curve $E_D$.

\begin{proposition}
Let $S$ be the regular surface isogenous to a product of unmixed type associated to the unmixed structure $(T_C,T_D)$.  Then $H^2(S,\mathbb{Q})\simeq H^2(C'\times E_D,\mathbb{Q})$, where $C'$ and $E_D$ are the elliptic curves described above.
\end{proposition}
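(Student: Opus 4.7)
The argument mirrors that of Subsection~\ref{caseb}. Take $H := \ker(\tau_4) = \langle(2,1)\rangle$, a normal subgroup of order $3$, so that $G/H \cong \mathbb{Z}_3$. Equation \eqref{coH} of Remark \ref{rem} then yields
\[
Z \cong \bigl(H^1(C',\mathbb{Q}) \otimes H^1(D',\mathbb{Q})\bigr)^{G/H},
\]
with $G/H$ acting on $H^1(C',\mathbb{Q})$ (dimension $2$) and $H^1(D',\mathbb{Q})$ (dimension $4$) via the unique $2$-dimensional irreducible rational representation of $\mathbb{Z}_3$, with multiplicities $1$ and $2$ respectively. The curve $C'$ has genus $1$, hence is already an elliptic curve, whereas $D'$ has genus $2$ with a faithful $\mathbb{Z}_3$-action whose quotient is $\mathbb{P}^1$.

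By the classical theory of Jacobians of genus-$2$ curves with a faithful order-$3$ automorphism quotienting to $\mathbb{P}^1$, $\mathrm{Jac}(D')$ is isogenous to the self-product of an elliptic curve $E_D$ with complex multiplication by $\mathbb{Z}[\xi_3]$. Consequently there is an isomorphism of rational Hodge structures $H^1(D',\mathbb{Q}) \cong H^1(E_D,\mathbb{Q})^{\oplus 2}$, which I would choose so as to be compatible with the $\mathbb{Z}_3$-action, with $\mathbb{Z}_3$ acting on each $H^1(E_D,\mathbb{Q})$ factor via the CM by $\mathbb{Z}[\xi_3]$.

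Following Step 2 of the proof of Theorem \ref{main}, I would build a Hodge morphism
\[
\psi : H^1(C',\mathbb{Q}) \otimes H^1(E_D,\mathbb{Q}) \longrightarrow Z
\]
by including $H^1(E_D,\mathbb{Q})$ as a $\mathbb{Z}_3$-equivariant direct summand of $H^1(D',\mathbb{Q})$, tensoring with $H^1(C',\mathbb{Q})$, and projecting onto the $G/H$-invariants via the averaging operator. Both source and target are rational Hodge structures of weight $2$ and dimension $4$ with Hodge numbers $(1,2,1)$. If $E_D$ is chosen (possibly after replacing it by its complex conjugate) so that the $\mathbb{Z}_3$-characters acting on $H^{1,0}(C')$ and $H^{1,0}(E_D)$ are inverse to one another, then $\dim \mathrm{Im}(\psi)^{2,0} = 1$, and the kernel/image dichotomy argument from Step 2 of Theorem \ref{main} shows $\psi$ induces a Hodge isomorphism $H^1(C',\mathbb{Q}) \otimes H^1(E_D,\mathbb{Q}) \cong Z$. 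Combining with $U \cong \mathbb{Q}^2(-1) \cong H^2(C',\mathbb{Q}) \oplus H^2(E_D,\mathbb{Q})$ gives $H^2(S,\mathbb{Q}) \cong H^2(C' \times E_D,\mathbb{Q})$.

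The main obstacle is the Hodge-compatibility of the splitting of $H^1(D',\mathbb{Q})$. Because $\tau_4$ is not absolutely irreducible---its endomorphism skew-field is $\mathbb{Q}(\xi_3)$---the isogenous-components decomposition of Proposition \ref{isogenousdecomposition} does not by itself split $H^1(D',\mathbb{Q})$ into $2$-dimensional rational Hodge substructures; the two $\mathbb{Z}_3$-characters appearing on $H^{1,0}(D')$ are complex conjugates of each other, so the splitting must come from the CM structure on $\mathrm{Jac}(D')$, and the choice of $E_D$ up to complex conjugation is precisely what is needed to make $\psi$ hit the correct Hodge type.
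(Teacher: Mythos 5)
Your overall route is the same as the paper's: set $H=\ker(\tau_4)=\langle(2,1)\rangle$, pass to $C'=C/H$ (genus $1$) and $D'=D/H$ (genus $2$) via \eqref{coH}, split off an elliptic factor $E_D$ of $\mathrm{Jac}(D')$, and run Step 2 of Theorem \ref{main} on the averaged map $\psi$. However, there is a genuine error in the way you produce the splitting: you assert that $\mathrm{Jac}(D')$ is isogenous to $E_D\times E_D$ with $E_D$ having complex multiplication by $\mathbb{Z}[\xi_3]$, and that $H^1(D',\mathbb{Q})\cong H^1(E_D,\mathbb{Q})^{\oplus 2}$ can be chosen $\mathbb{Z}_3$-equivariantly with $\mathbb{Z}_3$ acting on each factor through the CM. This is false in general. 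A Chevalley--Weil (or cyclic-cover eigenspace) computation for $T_D=[(0,2),(0,1),(1,0),(2,0)]$ shows that $\sigma^*$ acts on $H^{1,0}(D')$ with eigenvalues $\xi_3$ and $\overline{\xi_3}$, one each (signature $(1,1)$, not $(2,0)$); the curves $D'$ form a one-parameter family ($4$ branch points on $\mathbb{P}^1$), so $E_D$ varies and is generically without CM --- the embedding $\mathbb{Q}(\xi_3)\hookrightarrow M_2(\mathbb{Q})\subseteq End^0(E_D^2)$ requires no CM on $E_D$. Moreover, a $\sigma$-stable $2$-dimensional Hodge substructure of $H^1(D',\mathbb{Q})$ on which $\sigma$ acts without invariants would give an order-$3$ Hodge automorphism of $E_D$, i.e.\ would force exactly the CM that generically fails; so the equivariant Hodge splitting you invoke does not exist in general, and the character-matching argument you use to guarantee $\dim Im(\psi)^{2,0}=1$ collapses with it. (A consistency check: in Section \ref{picard} the paper claims maximal Picard number only for the surfaces of Sections \ref{caseb} and \ref{cased}; if your CM claim held, the case-$c$ surface would also have $\rho(S)=4$.)

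The gap is repairable, and the repaired argument is the paper's: the non-simplicity of $\mathrm{Jac}(D')$ (a genus-$2$ curve with an order-$3$ automorphism and quotient $\mathbb{P}^1$ carries an extra involution, and the $\mathbb{Q}(\xi_3)$-action then forces the two elliptic quotients to be isogenous) gives \emph{some} decomposition of $H^1(D',\mathbb{Q})$ into two $2$-dimensional Hodge substructures $B_D\oplus B_D'$, with no equivariance claimed or needed. The composite $H^1(C',\mathbb{Q})\otimes B_D\hookrightarrow H^1(C',\mathbb{Q})\otimes H^1(D',\mathbb{Q})\to Z$ (averaging over $G/H$) is a morphism of Hodge structures because every map in it is, regardless of whether $B_D$ is $\sigma$-stable; one then arranges $\dim Im(\psi)^{2,0}=1$ by changing the choice of the $2$-dimensional substructure if necessary, exactly as in Step 2 of Theorem \ref{main}, and concludes with the kernel/image dichotomy. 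So you should delete the CM and equivariance claims and justify $\dim Im(\psi)^{2,0}=1$ by the ``change of choice'' argument rather than by matching $\mathbb{Z}_3$-characters on $H^{1,0}(C')$ and $H^{1,0}(E_D)$.
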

\begin{proof}
By Remark \ref{rem} we have $H^2(S,\mathbb{Q})\cong H^2(C'\times D',\mathbb{Q})^G$ and we have already observed that the cohomology group $H^1(D',\mathbb{Q})$ decomposes as sum of two Hodge substructures, both of dimension $2$.  Now we conclude with the same arguments used in the proof of Theorem \ref{main}.
\end{proof}

The case of the group $G=(\mathbb{Z}_2)^4\rtimes_\varphi \mathcal{D}_3$ follows in a similar way.  This group has $14$ irreducible complex representations with Schur index $1$: $12$ are self-dual while the remaining two are in the same Galois-orbit.  
So we have an irreducible rational representation $\tau:G\to GL(W)$ such that $\tau\otimes\mathbb{C}$ decompose as sum of two irreducible complex representations.  We set $H=Ker(\tau)$ and we proceed as before.

\subsection{Case d}\label{cased}
Let $G$ be the group $G=(\mathbb{Z}_2)^3\rtimes_\varphi\mathbb{Z}_4$ where $\varphi:\mathbb{Z}_4\to Aut(\mathbb{Z}_2^3)\simeq GL(3,\mathbb{F}_2)$ is defined by
\begin{equation*}
\varphi(1)=
\begin{pmatrix}
1&0&0\\
0&1&0\\
1&0&1
\end{pmatrix}.
\end{equation*}
Consider the unmixed ramification structure $(T_C,T_D)$ of $G$ of type $([2^2,4^2],[2^2,4^2])$:
\begin{equation*}
\begin{split}
T_C=[((1,0,0),2),((1,1,1),2),((0,1,0),1),((0,0,1),3)],\\
T_D=[((1,1,0),0),((1,0,0),0),((1,0,0),3),((1,1,1),1)].
\end{split}
\end{equation*}
We construct the group $G$ in \cite{magma}:
\begin{verbatim}
H:=CyclicGroup(4);
K:=SmallGroup(8,5);
A:=AutomorphismGroup(K);
M:=hom<K->K|[K.1->K.1*K.3, K.2->K.2, K.3->K.3]>;
Phi:=hom<H->A|[H.1->M]>;
G,a,b:=SemidirectProduct(K,H,Phi);
G1:=a(K.1);
G2:=a(K.2);
G3:=a(K.3);
G4:=b(H.1);
\end{verbatim}
With this notation the unmixed ramification structure is given by:
\begin{equation*}
T_C=[g_1g_4^2,\, g_1g_2g_3g_4^2,\,g_2g_4,\,g_3g_4^3], \quad T_D=[g_1g_2,\,g_1,\,g_1g_4^3,\,g_1g_2g_3g_4].
\end{equation*}
By direct calculation we see that the surface $S$ is of type $d$.
We denote by $\tau_{j_1}:G \to GL(W_{j_1})$, $\tau_{j_2}:G\to GL(W_{j_2})$ the two irreducible rational representations such that
\begin{equation*}
\begin{split}
&n_C(\tau_{j_1})=n_C(\tau_{j_2})=n_D(\tau_{j_1})=n_D(\tau_{j_2})=1,\\
&n_C(\tau_j)\cdot n_D(\tau_j)=0,\quad \forall j\mbox{ different from }j_1,\,j_2.
\end{split}
\end{equation*}
We set $H_1:=ker(\tau_{j_1})$ and $H_2:=ker(\tau_{j_2})$ of $G$.  A set of generators for $H_1$ and $H_2$ are
\begin{equation*}
\begin{split}
H_1&=\langle((1,0,0),0),((0,0,1),0),((0,1,0),2)\rangle=\langle g_1,\,g_3,\,g_2g_4^2\rangle,\\
H_2&=\langle((1,1,0),0),((0,0,1),0),((0,1,0),2)\rangle=\langle g_1g_2,\,g_3,\,g_2g_4^2 \rangle.
\end{split}
\end{equation*}
We observe that:
\begin{itemize}
\item $G/H_1\cong G/H_2\cong \mathbb{Z}_4$;
\item the curves $C_1:=C/H_1$, $C_2:=C/H_2$, $D_1:=D/H_1$ and $D_2:=D/H_2$ have genus $1$.
\end{itemize}
Consider the intermediate coverings:
\begin{equation*}
\xymatrix{
C \ar[r]^{H_i} \ar[dr]_{G}
& C_i\ar[d]^{\mathbb{Z}_4}\\
& \mathbb{P}^1}
\qquad
\xymatrix{
D \ar[r]^{H_i} \ar[dr]_{G}
& D_i \ar[d]^{\mathbb{Z}_4}\\
& \mathbb{P}^1}
\end{equation*}
Since $C_i$ and $D_i$, $i=1,\,2$ are elliptic curves with an automorphism of order $4$ they are all isogenous to
\begin{equation*}
E_i=\frac{\mathbb{C}}{\mathbb{Z}\oplus i\mathbb{Z}}.
\end{equation*}
By Remark \ref{rem} we get
\begin{equation*}
Z=\left(H^1(C_1,\mathbb{Q})\otimes H^1(D_1,\mathbb{Q})\right)^G\oplus \left(H^1(C_2,\mathbb{Q})\otimes H^1(D_2,\mathbb{Q})\right)^G.
\end{equation*}
\begin{proposition}
Let $S$ be the surface isogenous to a higher product defined above.  Then $H^2(S,\mathbb{Q})=H^2(E_i\times E_i,\mathbb{Q})$, as rational Hodge structures.
\end{proposition}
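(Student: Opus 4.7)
The plan is to extend the Hodge-structure argument of Theorem~\ref{main} to the two-representation structure of case $d$, using the decomposition of $Z$ displayed just above. Each intermediate curve $C_k, D_k$ ($k=1,2$) is an elliptic curve with a faithful $\mathbb{Z}_4 \cong G/H_k$-action and so is isogenous to $E_i$. Fixing Hodge isomorphisms $H^1(C_k,\mathbb{Q}) \cong V \cong H^1(D_k,\mathbb{Q})$ with $V := H^1(E_i,\mathbb{Q})$, each summand $Z_k := (H^1(C_k,\mathbb{Q}) \otimes H^1(D_k,\mathbb{Q}))^G$ is identified with a Hodge substructure $(V \otimes V)^{\mathbb{Z}_4}$ of $V \otimes V$ for a specific $\mathbb{Z}_4$-action induced by the quotient.

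The heart of the proof is to identify the Hodge types of $Z_1$ and $Z_2$. Applying Lemma~\ref{pari} to the restrictions of $\tau_{j_1}, \tau_{j_2}$ to $H^1(C,\mathbb{Q})$ and $H^1(D,\mathbb{Q})$ determines how the generator of $\mathbb{Z}_4$ acts on the holomorphic one-forms of each $C_k$ and $D_k$, and hence which characters of $\mathbb{Z}_4$ appear on each Hodge component $V^{p,q} \otimes V^{p',q'}$ of $V \otimes V$. Since $\dim Z^{2,0}=1$ and each $Z_k$ is a self-conjugate rational Hodge substructure, exactly one of $Z_1, Z_2$ (say $Z_1$) must be purely of Hodge type $(1,1)$ and so coincides with $V^{1,0} \otimes V^{0,1} \oplus V^{0,1} \otimes V^{1,0}$, while the other is of type $(2,0) + (0,2)$ and coincides with $V^{1,0} \otimes V^{1,0} \oplus V^{0,1} \otimes V^{0,1}$. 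These two complementary Hodge subspaces fill out $V \otimes V$, so the natural map $Z_1 \oplus Z_2 \to V \otimes V$ is an isomorphism of rational Hodge structures, giving $Z \cong V \otimes V$.

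To finish, I combine this with the Tate part $U \cong \mathbb{Q}^2(-1) \cong (H^0(E_i,\mathbb{Q}) \otimes H^2(E_i,\mathbb{Q})) \oplus (H^2(E_i,\mathbb{Q}) \otimes H^0(E_i,\mathbb{Q}))$ coming from Proposition~\ref{decompositionsecond}; the Künneth formula then yields $H^2(E_i \times E_i,\mathbb{Q}) \cong U \oplus (V \otimes V) \cong H^2(S,\mathbb{Q})$ as rational Hodge structures. The main obstacle is the character computation underlying the Hodge-type assertion for $Z_1, Z_2$: verifying the complementarity requires tracking the $\mathbb{Z}_4$-eigenvalues on the holomorphic one-forms through the specific spherical systems $T_C, T_D$ via Broughton's formula (Proposition~\ref{Broughton}), which is the technical heart of the argument.
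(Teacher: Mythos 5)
Your argument is correct, and its bookkeeping is the same as the paper's: the decomposition $Z=Z_1\oplus Z_2$ into the two $2$-dimensional invariant summands, the fact that $\dim Z^{2,0}=1$ together with $\dim Z_k^{2,0}=\dim Z_k^{0,2}$ forces one summand to be of type $(2,0)+(0,2)$ and the other purely of type $(1,1)$, and the isogenies $C_k\sim D_k\sim E_i$. Where you genuinely differ is the final assembly. The paper keeps only the summand carrying the $(2,0)$-part, say the one coming from $C_1\times D_1$, and reruns Step 2 of Theorem \ref{main} on the Hodge morphism $\psi\colon H^1(C_1,\mathbb{Q})\otimes H^1(D_1,\mathbb{Q})\to Z$: the kernel of $\psi$ and the complement of its image are purely of type $(1,1)$, hence Tate, which yields $H^2(S,\mathbb{Q})\cong H^2(C_1\times D_1,\mathbb{Q})\cong H^2(E_i\times E_i,\mathbb{Q})$. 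You instead transport both summands into $V\otimes V$ with $V=H^1(E_i,\mathbb{Q})$ and show they are complementary, obtaining a direct Hodge isomorphism $Z\cong V\otimes V$ before adding $U\cong\mathbb{Q}^2(-1)$ and applying K\"unneth; this avoids splitting off Tate pieces (and the auxiliary surface $C_1\times D_1$) at the cost of fixing Hodge identifications $H^1(C_k,\mathbb{Q})\cong V\cong H^1(D_k,\mathbb{Q})$ for both $k$. Two small remarks: it is the complexifications $Z_k\otimes\mathbb{C}$, not the rational spaces $Z_k$, that coincide with the complex subspaces $V^{1,0}\otimes V^{1,0}\oplus V^{0,1}\otimes V^{0,1}$ and the $(1,1)$-part -- what you actually use, and do have, is that the two rational images meet trivially and span for dimension reasons; and the Broughton/$\mathbb{Z}_4$-eigenvalue computation you flag as the technical heart is not needed, since it only decides which of $Z_1,Z_2$ has which type, a labelling that is irrelevant to the conclusion -- this is exactly the paper's ``up to exchange of $C_1\times D_1$ with $C_2\times D_2$''.
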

\begin{proof}
We have already observed that
\begin{equation*}
Z=\left(H^1(C_1,\mathbb{Q})\otimes H^1(D_1,\mathbb{Q})\right)^G\oplus \left(H^1(C_2,\mathbb{Q})\otimes H^1(D_2,\mathbb{Q})\right)^G.
\end{equation*}
Up to exchange of $C_1\times D_1$ with $C_2\times D_2$, we can assume that the Hodge structure $W:=(H^1(C_1,\mathbb{Q})\otimes H^1(D_1,\mathbb{Q}))^G$ has dimension $2$ and $\dim W^{2,0}=\dim W^{0,2}=1$.  Now following the same idea of the proof of Theorem \ref{main} we get:
\begin{equation*}
H^2(S,\mathbb{Q})\cong H^2(C_1\times D_1,\mathbb{Q})\cong H^2(E_i\times E_i,\mathbb{Q}).
\end{equation*}
\end{proof}

\begin{remark} 
Consider, as in the proof of Theorem \ref{main}, the Hodge morphism $\psi:H^1(C_1,\mathbb{Q})\otimes H^1(D_1,\mathbb{Q})\to Z$.  Here it is clear that $\psi$ is not an isomorphism since its image $Im(\psi)$ has dimension $2$.
\end{remark}

\section{Conclusion}
\begin{theorem}\label{main+}
Let $S$ be a regular surface isogenous to a higher product of unmixed type with $\chi(\mathcal{O}_S)=2$.  Then there exist two elliptic curves $E_C$ and $E_D$ such that $H^2(S,\mathbb{Q})\cong H^2(E_C\times E_D, \mathbb{Q})$ as rational Hodge structures.
\end{theorem}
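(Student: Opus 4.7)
The plan is to deduce this theorem by assembling the case analysis carried out in the previous sections. First I would use Proposition \ref{decompositionsecond} to split $H^2(S,\mathbb{Q}) \cong U \oplus Z$, where $U$ is isomorphic to the Tate Hodge structure $\mathbb{Q}^2(-1)$ and therefore matches the Künneth summand $(H^2(E_C) \otimes H^0(E_D)) \oplus (H^0(E_C) \otimes H^2(E_D))$ for any choice of elliptic curves. It thus suffices to exhibit $E_C$ and $E_D$ such that, as rational Hodge structures of weight two, $Z \cong H^1(E_C,\mathbb{Q}) \otimes H^1(E_D,\mathbb{Q})$.

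Next I would invoke Proposition \ref{classification}, which groups the surfaces into four types $a, b, c, d$, together with Gleissner's classification \cite{Gl11} summarized in Table \ref{tabella}, which shows that only finitely many families occur and that types $b, c, d$ are exhausted by four explicit groups: $G(128,36)$ giving type $b$, the groups $(\mathbb{Z}_2)^4 \rtimes_\varphi \mathcal{D}_3$ and $(\mathbb{Z}_3)^2$ giving type $c$, and $(\mathbb{Z}_2)^3 \rtimes_\varphi \mathbb{Z}_4$ giving type $d$. The case $a$ is precisely the content of Theorem \ref{main}, proved via the group algebra decomposition: since the relevant irreducible rational representation $\tau$ is absolutely irreducible, its division algebra is $\mathbb{D} = \mathbb{Q}$, which forces the isogenous components $B_C, B_D$ of the isotypical summands to be two-dimensional Hodge substructures, hence attached to elliptic curves. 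The four exceptional families have already been handled one by one in Sections \ref{caseb}, \ref{casec} and \ref{cased}: in each of them one passes to the intermediate quotients $C' = C/H$ and $D' = D/H$ for an appropriate normal subgroup $H \triangleleft G$, exploits Remark \ref{rem} to rewrite $Z$ as $G/H$-invariants of $H^1(C',\mathbb{Q}) \otimes H^1(D',\mathbb{Q})$, and uses that the resulting curves of genus one or two have automorphisms (from $\mathbb{Z}_3$, $\mathbb{Z}_4$ or $Q_8$) forcing their Jacobians to split up to isogeny into copies of a specific CM elliptic curve.

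Combining these four cases yields the theorem. The main conceptual subtlety lies in type $d$, where $Z$ receives contributions from two distinct irreducible rational representations rather than from a single one, so the argument must first decompose $Z \cong (H^1(C_1) \otimes H^1(D_1))^G \oplus (H^1(C_2) \otimes H^1(D_2))^G$ and then select the summand carrying the $(2,0)$-class before applying the surjective Hodge morphism argument of Step 2 of Theorem \ref{main}. Apart from this, no new obstacle arises at the level of Theorem \ref{main+}: it is essentially a bookkeeping corollary, whose validity depends on the completeness of Gleissner's list and on the handful of explicit computations already carried out for the exceptional families.
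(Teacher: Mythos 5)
Your proposal is correct and follows essentially the same route as the paper: the paper's own proof of Theorem \ref{main+} is exactly the combination of Theorem \ref{main} for type $a$ with the case-by-case analysis of the exceptional families in Sections \ref{caseb}, \ref{casec} and \ref{cased}, exactly as you describe. Your remarks on the reduction to $Z$, on Gleissner's classification, and on the extra decomposition step needed in type $d$ all match the paper's treatment.
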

\begin{proof}
It follows from Theorem \ref{main} and the analysis, case by case, of the previous section.
\end{proof}

\begin{remark}
In general the Theorem does not imply the existence of intermediate covering of the curves $C$, $D$.
More precisely there are no subgroups $H_C,\,H_D$ of $G$ such that $C/H_C\cong E_C$, $C/H_D\cong E_D$ where $E_C,\,E_D$ are elliptic curves such that $H^2(S,\mathbb{Q})\cong H^2(E_C\times E_D, \mathbb{Q})$.  See the following example.
\end{remark}

\begin{example}
Consider once more the unmixed ramification structure studied in Example \ref{z32} and in Section \ref{casec}.  Let $G$ be the abelian group $(\mathbb{Z}_3)^2$ and let $T_D$ be the spherical system of generators
\begin{equation*}
T_D=[(0,2),(0,1),(1,0),(2,0)].
\end{equation*}
such that the corresponding curve $D$ has genus $4$.  Consider all the $6$ subgroups of $G$.  By \cite{magma} we verify that for all subgroups $H\le G$ the quotient curve $D/H$ has genus $0,\,2$ or $4$.
In particular there is not any subgroup $H$ such that $D/H$ is an elliptic curve.
\end{example}

\subsection{About the Picard number}\label{picard}

Let $S$ be a regular surfaces isogenous to a higher product of unmixed type with $\chi(\mathcal{O}_S)=2$.  We can compute $\rho (S)$, the Picard number of $S$, using Theorem \ref{main+}.  Let $E_C$ and $E_D$ be the elliptic curves such that $H^2(E_C\times E_D,\mathbb{Q})\cong H^2(S,\mathbb{Q})$: we get $\rho(S)=\rho(E_C\times E_D)$.
The Picard number of an Abelian surface of product type $E_1\times E_2$ is
\begin{equation*}
\rho(E_1\times E_2)=
\begin{cases}
4 & \mbox{if }E_1\sim E_2\mbox{ has complex multiplication},\\
3 & \mbox{if }E_1\sim E_2\mbox{ but they do not have CM},\\
2 & \mbox{otherwise}.\end{cases}
\end{equation*}

A surface $S$ is said to be a surface with maximal Picard number if $\rho(S)=h^{1,1}(S)$: this kind of surfaces are studied in a recent work of Beauville \cite{Be13} where a lot of examples are constructed.
As already observed a regular surface $S$ isogenous to a higher product of unmixed type with $\chi(\mathcal{O}_S)=2$ has $h^{1,1}(S)=4$.  It follows that the surfaces studied in Sections $\ref{caseb}$ and $\ref{cased}$ are examples of surfaces with maximal Picard number.


\end{document}